\theoremstyle{plain}
\newtheorem{mainthm}{Theorem}
\newtheorem{maincor}[mainthm]{Corollary}
\newtheorem{thm}{Theorem}[section]
\newtheorem{cor}[thm]{Corollary}
\newtheorem{lem}[thm]{Lemma}
\newtheorem{prop}[thm]{Proposition}
\newtheorem{defi}[thm]{Definition}
\theoremstyle{definition}
\newtheorem{rem}[thm]{Remark}
\newcommand{\eqdef}{\stackrel{\scriptscriptstyle\rm def}{=}}
 \DeclareMathOperator{\diam}{diam}
\def\l@part{\@tocline{0}{-2pt}{1pc}{}{}}
\def\l@section{\@tocline{1}{-2pt}{1pc}{4.6em}{}}
\renewcommand{\tocpart}[3]{%
  \indentlabel{\@ifnotempty{#2}{\makebox[2.3em][l]{%
    \ignorespaces#1 #2.\hfill}}}\bf{#3}}
\renewcommand{\tocsection}[3]{%
  \indentlabel{\@ifnotempty{#2}{\hspace*{2.3em}\makebox[2.3em][l]{%
    \ignorespaces#1 #2.\hfill}}}#3}
\newcommand{\supp}{\operatorname*{supp}}
\let\oldtocsection=\tocsection
\let\oldtocsubsection=\tocsubsection
\renewcommand{\tocsection}[2]{\hspace{0em}\bf\oldtocsection{#1}{#2}}
\renewcommand{\tocsubsection}[2]{\hspace{4.8em}\oldtocsubsection{#1}{#2}}
\let\oldtocsubsubsection=\tocsubsubsection
\renewcommand{\tocsubsubsection}[2]{\hspace{4.2em}\oldtocsubsubsection{#1}{#2}}
\begin{document}
~\vspace{-2.0cm} 
\title[Markovian random maps]{Finitude of physical measures for Markovian random maps}

\author[Barrientos]{Pablo G.~Barrientos}
\address[Pablo G.~Barrientos]{Instituto de Matem\'atica e Estat{\'{i}}stica, Universidade de Federal Fluminense, Niter\'{o}i, Brazil}
\email{pgbarrientos@id.uff.br}

\author[D. Malicet]{Dominique Malicet}
\address[Dominique Malicet]{Universit\'e Gustave Eiffel, Laboratoire d'analyse et de math\'ematiques appliqu\'ees (LAMA), 5 Bd Descartes, 77420 Champs-sur-Marne, Francia}
\email{dominique.malicet@univ-eiffel.fr}

\author[Nakamura]{Fumihiko Nakamura}
\address[Fumihiko Nakamura]{Faculty of Engineering, Kitami Institute of Technology, Hokkaido, 090-8507, JAPAN}
\email{nfumihiko@mail.kitami-it.ac.jp}

\author[Nakano]{Yushi Nakano}
\address[Yushi Nakano]{Faculty of Science, Hokkaido University, Hokkaido, 060-0810, Japan}
\email{yushi.nakano@math.sci.hokudai.ac.jp}

\author[Toyokawa]{Hisayoshi Toyokawa}
\address[Hisayoshi Toyokawa]{Faculty of Engineering, Kitami Institute of Technology, Hokkaido, 090-8507, JAPAN}
\email{h\_toyokawa@mail.kitami-it.ac.jp.}

\subjclass{Primary 37A30, 37C40, 37H05; Secondary 37A50, 37C30, 60J05}

\keywords{Markovian random maps, finitude of physical measures, Markov operators, constrictiveness, mostly contracting random maps}

\begin{abstract}

We study the finiteness of physical measures for skew-product transformations $F$ associated with discrete-time random dynamical systems driven by ergodic Markov chains. We develop a framework, using an independent and identically distributed (i.i.d.) representation of the Markov process, that facilitates transferring results from the well-studied Bernoulli (i.i.d.) setting to the Markovian context.

Specifically, we establish conditions for the existence of finitely many ergodic, $F$-invariant measures, absolutely continuous with respect to a reference measure, such that their statistical basins of attraction for measurable bounded observables cover the phase space almost everywhere. Furthermore, we investigate a weaker notion, which demands finitely many physical measures (not necessarily absolutely continuous) whose weak$^*$ basins of attraction cover the phase space almost everywhere. We show that for random maps on compact metric spaces driven by Markov chains on finite state spaces, this property holds if the system is mostly contracting, i.e., if all the Markovian invariant measures have negative maximal Lyapunov exponents. This result is applied to random $C^1$ diffeomorphisms of the circle and the interval under conditions based on the absence of invariant probability measures or finite invariant sets, respectively. We also connect our result to the quasi-compactness of the Koopman operator on the space of H\"older continuous functions. 
\end{abstract}

\setcounter{tocdepth}{2} \maketitle
\thispagestyle{empty}

\section{Introduction}

A discrete random dynamical system is defined by a measurable map  \( f:T \times X \to X \) referred to as a \emph{random map} where \((T,\mathscr{A})\) and \((X,\mathscr{B})\) are measurable spaces called the \emph{state space} and \emph{phase space}, respectively. From this map, one considers the non-autonomous iterations:
\[
    f^0_\omega = \mathrm{id}, \quad \text{and} \quad f^n_\omega = f_{\omega_{n-1}} \circ \dots \circ f_{\omega_0}, \quad n \geq 1, \quad \omega = (\omega_i)_{i\geq 0} \in \Omega,
\]
where \( (\Omega,\mathscr{F}) = (T^{\mathbb{N}}, \mathscr{A}^{\mathbb{N}}) \) is called the \emph{sample space} (of all possible noise realizations) and \( f_t := f(t,\cdot) \) for \( t \in T \). In other words, this defines a \emph{locally constant cocycle} over the shift transformation $\sigma$ on \( \Omega \) which can be seen as the fiber dynamics of the  skew-product
\begin{equation*}\label{eq:skew}
F: \Omega\times X \to \Omega\times X, \quad F(\omega,x)=(\sigma(\omega),f_{\omega_0}(x)).
\end{equation*}

However, we are not interested in studying the evolution of the dynamics for all possible compositions. Instead, we focus on the sequence of compositions determined by a generic sequence \( \omega \in \Omega \), according to a given ergodic shift-invariant probability measure \( \mathbb{P} \) on \( \Omega \), which we refer to as \emph{noise}. A common choice for \( \mathbb{P} \) is an independent and identically distributed (i.i.d.) measure, also known as a \emph{Bernoulli measure}, which serves as a starting point for analyzing the dynamics of a random map. A natural next step is to extend the results to \emph{Markov measures} on \( \Omega \).

One of the main objectives of this paper is to develop a framework that facilitates the straightforward transfer of results from Bernoulli noise to Markov noise. We apply this approach to demonstrate that some of the results obtained in recent works~\cite{BNNT22} and~\cite{BM24} concerning the Palis conjecture in the i.i.d.~random setting can be extended to Markov noise with relative success. 
Before presenting this framework and stating our results, we introduce the problem setting in more detail.

\subsection{Markov measures}  \label{ss:Markov}

Consider a \emph{transition probability} \( Q(t,A) \) on \( T \), defined as a function \( Q: T \times \mathscr{A} \to [0,1] \) such that \( Q(t,\cdot) \) is a probability measure for all \( t \in T \), and \( Q(\cdot,A) \) is a measurable function for all \( A \in \mathscr{A} \). Let \( p \) be a \( Q \)-stationary measure on \( T \), meaning that \( p = \int Q(t,\cdot) \ dp(t) \). Then, there is a unique shift-invariant probability measure \( \mathbb{P} \) on \( \Omega  = T^\mathbb{N} \) satisfying
\[
    \mathbb{P}(A_0 \times \dots \times A_{m-1} \times \Omega) = \int_{A_0} \dots \int_{A_{m-1}} Q(t_{m-2}, dt_{m-1}) \dots Q(t_0, dt_1)\, dp(t_0)
\]
for every \( m \in \mathbb{N} \) and all sets \( A_0, \dots, A_{m-1} \in \mathscr{A} \), cf.~\cite{Revuz84}. The measure \( \mathbb{P} \) is called the \emph{Markov measure} with transition probability \( Q(t,A) \) and initial distribution \( p \). We observe that if the transition probability is given by $Q(t,A)=p(A)$ for all $t\in T$, then the Markov measure $\mathbb{P}$ with this transition probability and initial distribution $p$ is the \emph{Bernoulli measure} $\mathbb{P}=p^\mathbb{N}$. Thus, Markov measures extend Bernoulli measures.

Following \cite{crauel1991markov,matias:2022}, we introduce the following definition, which will be useful throughout the paper.

\begin{defi}\label{def:markovian measure}
An \(F\)-invariant probability measure \(\bar{\mu}\) on \(\Omega \times X\) is said to be \emph{\(\mathbb{P}\)-Markovian} if its first marginal is \(\mathbb{P}\) and if it admits a disintegration of the form
$d\bar{\mu} = \bar{\mu}_{\omega_0}\,d\mathbb{P}(\omega)$.
\end{defi}

In the Bernoulli case, i.e., when \(\mathbb{P}=p^\mathbb{N}\), the \(F\)-invariance of a \(\mathbb{P}\)-Markovian measure \(\bar{\mu}\) on \(\Omega \times X\) forces the fiber measures \(\bar{\mu}_\omega\) in its disintegration to be almost surely equal to a constant measure \(\mu\); hence, \(\bar{\mu}=\mathbb{P}\times \mu\). See Lemma~\ref{lem:Markov-Bernoulli} for details. The measure \(\mu\) on \(X\) is called \emph{\(f\)-stationary} and is characterized by the formula
$\mu = \int f_t \mu \,dp(t)$,  
cf.~\cite{O83,kifer-ergodic}.

\subsection{Finitude of Physical Measures} 
Let \((T, \mathscr{A}, p)\) be a probability space and fix a probability measure \(m\),  called the \emph{reference measure}, on the Borel $\sigma$-field $\mathscr{B}$ of a Polish space $X$.  

The finitude of ergodic physical measures is a central problem in dynamical systems. Palis~\cite{Palis2000} conjectured that for most (smooth) dynamical systems, there exist finitely many physical measures that capture the statistical behavior of almost every orbit. Here, physicality means that the basin of attraction of each measure has a positive reference measure and that these measures describe the asymptotic statistics in the sense that their basins cover the whole phase space modulo a null set with respect to \(m\).

\enlargethispage{1cm}
In~\cite{BNNT22}, the authors characterize the finitude of physical measures in the context of nonsingular random maps driven by Bernoulli noise, that is, when \(\mathbb{P}=p^\mathbb{N}\). It was introduced the property 
\emph{{\tt (FPM)} with respect to \(m\) for a Bernoulli random map \(f\)} as the existence of finitely many ergodic \(f\)-stationary probability measures \(\mu_1,\dots,\mu_r\) on \(X\) satisfying:
\begin{enumerate}[label=\arabic*)]
 \item Each \(\mu_i\) is absolutely continuous with respect to \(m\);
 \item Their supports are pairwise disjoint (up to an \(m\)-null set);\footnote{Let $(Y,\mathscr{G},\eta)$ be a probability space. The support of a probability measure $\mu$ (up to $\eta$-null set or relative to $\eta$) is any set $S \in  \mathscr{G}$ such that $\mu( Y \setminus S  ) = 0$  and for any $A \in \mathscr{G}$ with $A \subset S$ and $\mu( A ) = 0$, we have $\eta( A ) = 0$. If $d\mu=  h \, d\eta$, this support coincides with the support of density $h$ up to an $\eta$-null set.}
 \item For any measurable bounded function \(\psi: X \to \mathbb{R}\),
  \begin{equation*}\label{eq:10100a}
  m\Bigl(B_\omega(\mu_1, \psi)\cup \dots \cup B_\omega(\mu_r, \psi)\Bigr)=1 \quad \text{for \(\mathbb{P}\)-almost every \(\omega \in \Omega\),}
  \end{equation*}
where 
\begin{equation*}\label{eq:10100b}
\quad B_\omega(\mu_i, \psi)=\bigg\{ x\in X : \, \lim_{n\to \infty} \frac{1}{n} \sum_{j=0}^{n-1} \psi\bigl(f_{\omega}^{j}(x)\bigr) = \int \psi \, d\mu_i \bigg\} \quad \text{for \(i=1,\dots,r\).}
\end{equation*}
\end{enumerate}

We now revisit the above definition of {\tt (FPM)} as follows:

\begin{defi}\label{dfn:11}
Let \(\mathbb{P}\) be an ergodic shift-invariant Markov measure on \(\Omega\). We say that the skew-product \(F\) associated with  \(f\) satisfies \emph{{\tt (FPM)} with respect to \(\bar{m}=\mathbb{P}\times m\)} if there is finitely many ergodic \(F\)-invariant probability measures \(\bar{\mu}_1,\dots,\bar{\mu}_r\) on \(\Omega \times X\) such that for each \(i=1,\ldots,r\),
\begin{enumerate}[label=\arabic*),start=1]
   \item \(\bar{\mu}_i\) is a \(\mathbb{P}\)-Markovian measure, i.e., \(d\bar{\mu}_i = \mu_{i,\omega_0}\,d\mathbb{P}(\omega)\);
   \item \(\bar{\mu}_i \ll \bar{m}\), i.e., \(\bar{\mu}_i\) is absolutely continuous with respect to \(\bar{m}\);
   \item For any measurable bounded function \(\varphi:\Omega \times X \to \mathbb{R}\),
   \[
   \bar{m}\big(B(\bar{\mu}_1, \varphi) \cup \dots \cup B(\bar{\mu}_r, \varphi)\big)=1,
   \]
   where 
   \[
   B(\bar{\mu}_i, \varphi) = \bigg\{ (\omega, x) \in \Omega \times X : \, \lim_{n \to \infty} \frac{1}{n} \sum_{j=0}^{n-1} \varphi\bigl(F^j(\omega, x)\bigr) = \int \varphi \, d\bar{\mu}_i \bigg\}.
   \]
\end{enumerate}
\end{defi}

\begin{rem}[Basin of attraction] \label{rem:basin}
Assume that \(\Omega\) is a Polish space. Then \(\Omega \times X\) is also Polish and, hence, admits a complete metric \(d\). In this setting, the convergence in the weak* topology on the space of probability measures on \(\Omega \times X\) is determined by a countable set \(S\) of bounded Lipschitz functions (with respect to \(d\)), cf.~\cite[Proposition~5.1]{BNNT22}. Consequently, the \emph{statistical basin of attraction} \(B(\bar{\mu}_i)\) of \(\bar{\mu}_i\) for \(F\) can be expressed as
\[
B(\bar{\mu}_i) \coloneqq \bigg\{ (\omega,x)\in \Omega \times X : \lim_{n\to\infty}\frac{1}{n}\sum_{j=0}^{n-1}\delta_{F^j(\omega,x)}=\bar{\mu}_i \bigg\} = \bigcap_{\psi\in S} B(\bar{\mu}_i,\psi) \quad \text{for \(i=1,\dots,r\),}
\]
where the limit is taken in the weak* topology. Thus, item 3) in Definition~\ref{dfn:11} implies that the union of these statistical basins of attraction covers \(\Omega \times X\)  modulo \(\bar{m}\)-null set, i.e.,
\[
\bar{m}\big(B(\bar{\mu}_1)\cup\cdots\cup B(\bar{\mu}_r)\big)=1.
\]
\end{rem}

\begin{rem}[Physicality]
Since each \(B(\bar{\mu}_i)\) has full \(\bar{\mu}_i\)-measure and \(\bar{\mu}_i \ll \bar{m}\), it follows that \(\bar{m}(B(\bar{\mu}_i))>0\). Therefore, \(\bar{\mu}_i\) is a \emph{physical measure} with respect to the reference measure \(\bar{m}\) for every \(i=1,\ldots,r\).
\end{rem}

\begin{rem}[Disjointness] \label{rem:disjointness}
By Lemma~\ref{lem:abs}, the intersection of the measure-theoretic support of any pair of ergodic absolutely continuous invariant probability measures has zero reference measure. Consequently, the measures $\bar{\mu}_i$, $i=1,\dots,r$ in Definition~\ref{dfn:11} 
have pairwise disjoint supports modulo $\bar{m}$-null sets. Furthermore, since any ergodic $f$-stationary probability measure $\mu \ll m$ lifts to an 
ergodic $F$-invariant measure $\bar{\mu} = \mathbb{P} \times \mu$ with $\bar{\mu} \ll \bar{m}$, Lemma~\ref{lem:abs} also implies that Condition~2) in the definition of {\tt(FPM)} for a Bernoulli random map $f$ is automatically satisfied, making it redundant in the axiomatic formulation.
\end{rem}

In Theorem~\ref{prop:iid} below, we conclude that a Bernoilli random map  \(f\) satisfies {\tt (FPM)} with respect to \(m\) if and only if its associated skew-product \(F\) satisfies {\tt (FPM)} with respect to \(\bar{m}\). To get this, we require that the nonsingularity of the skew-product $F$. We recall that a transformation $g:Y\to Y$ of a probability space $(Y,\mathscr{G},\eta)$ is said to be \emph{$\eta$-nonsingular} if the preimage of any $\eta$-null set by $g$ is $\eta$-null. 

\begin{mainthm} \label{prop:iid}  
Let $(X , \mathscr B, m)$ and $(\Omega , \mathscr F, \mathbb{P})=(T^\mathbb{N}, \mathscr A^\mathbb{N}, p^\mathbb{N})$ be a Polish probability space and the infinite product space of a probability space $(T, \mathscr A, p)$, respectively.
Consider a measurable map $f:T\times X\to X$ and let $F:\Omega \times X \to \Omega \times X$ be its associated skew-product.  Assume that $F$ is $\bar{m}$-nonsingular, where $\bar{m}=\mathbb{P}\times m$.
Then, $f$ satisfies {\tt(FPM)} with respect to $m$ if and only if 
$F$ satisfies {\tt (FPM)} with respect to $\bar{m}$.
\end{mainthm}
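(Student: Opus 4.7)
The plan is to use Lemma~\ref{lem:Markov-Bernoulli} to reduce the theorem to a bijective correspondence between $\mathbb{P}$-Markovian $F$-invariant probability measures and $f$-stationary probability measures on $X$: in the Bernoulli setting, every $\mathbb{P}$-Markovian $\bar\mu$ is a product $\mathbb{P}\times\mu$. Under this identification, $F$-ergodicity of $\mathbb{P}\times\mu$ is equivalent to $f$-ergodicity of $\mu$ (standard, by the zero-past independence of the Bernoulli noise), $\bar\mu\ll\bar m$ is equivalent to $\mu\ll m$, and the pairwise disjoint-supports condition in the $f$-version of {\tt (FPM)} is automatic by Remark~\ref{rem:disjointness}. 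Items~1) and~2) in the two definitions of {\tt (FPM)} thus match, so the real content lies in item~3).

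The direction "$F\Rightarrow f$" is essentially Fubini. For any bounded measurable $\psi:X\to\mathbb{R}$, the observable $\varphi(\omega,x):=\psi(x)$ is bounded measurable on $\Omega\times X$ with $\tfrac{1}{n}\sum_{j<n}\varphi(F^j(\omega,x))=\tfrac{1}{n}\sum_{j<n}\psi(f_\omega^j(x))$ and $\int\varphi\,d\bar\mu_i=\int\psi\,d\mu_i$, so $B(\bar\mu_i,\varphi)=\{(\omega,x):x\in B_\omega(\mu_i,\psi)\}$, and the identity $\bar m(\bigcup_i B(\bar\mu_i,\varphi))=1$ immediately yields $m(\bigcup_i B_\omega(\mu_i,\psi))=1$ for $\mathbb{P}$-a.e.\ $\omega$.

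For the converse, set $\bar\mu_i:=\mathbb{P}\times\mu_i$; items~1)--2) are routine, and the work is item~3): for every bounded measurable $\varphi:\Omega\times X\to\mathbb{R}$, $\bar m(\bigcup_i B(\bar\mu_i,\varphi))=1$. My plan is to decompose $\varphi=\Phi_i(\omega)+\psi_i(\omega,x)$, where $\Phi_i(\omega):=\int\varphi(\omega,y)\,d\mu_i(y)$ and $\psi_i$ is fiberwise $\mu_i$-centered, so that the shift-only term satisfies $\tfrac{1}{n}\sum_{j<n}\Phi_i(\sigma^j\omega)\to\int\Phi_i\,d\mathbb{P}=\int\varphi\,d\bar\mu_i$ by Birkhoff for the ergodic shift $(\sigma,\mathbb{P})$. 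The correct index $i=i(\omega,x)$ is identified, for $\bar m$-a.e.\ $(\omega,x)$, by applying {\tt (FPM)} for $f$ along a countable determining family $\Psi\subset C_b(X)$ containing a function $\psi_0$ with $\int\psi_0\,d\mu_i$ pairwise distinct (possible since the $\mu_i$ are distinct by Remark~\ref{rem:disjointness}): this yields, for $\mathbb{P}$-a.e.\ $\omega$ and $m$-a.e.\ $x$, weak$^*$ convergence of $\tfrac{1}{n}\sum_{j<n}\delta_{f_\omega^j(x)}$ to a uniquely determined $\mu_{i(\omega,x)}$.

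The hard part will be showing the vanishing of the residual Ces\`aro average $\tfrac{1}{n}\sum_{j<n}\psi_i(F^j(\omega,x))$ for $\bar m$-a.e.\ $(\omega,x)$ with $i=i(\omega,x)$, which requires joint weak$^*$ convergence and is not implied by convergence of the marginals. My approach is to exploit the Bernoulli i.i.d.\ structure: test against product observables $g\otimes h$ with $g$ a cylinder of depth $k$ in the $\omega$-variables and $h\in C_b(X)$, and write $g=\int g\,d\mathbb{P}+\tilde g$, reducing to $\tfrac{1}{n}\sum_{j<n}\tilde g(\sigma^j\omega)\,h(f_\omega^j(x))\to 0$. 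Since the block $\omega_j,\ldots,\omega_{j+k-1}$ defining $\tilde g\circ\sigma^j$ is $\mathbb{P}$-independent of everything needed to form the remaining factors whenever $|j-\ell|\ge k$, a direct second-moment computation gives an $L^2(\bar m)$-bound of order $O(k/n)$, which a subsequence Borel--Cantelli argument upgrades to $\bar m$-a.e.\ convergence. Stone--Weierstrass density of such cylinder products in $C_b(\Omega\times X)$ (on compacts) extends this to all bounded continuous $\varphi$, and passage to general bounded measurable $\varphi$ uses an $L^1(\bar\mu_i)$-approximation together with the $\bar m$-nonsingularity of $F$ to propagate the approximation error along orbits.
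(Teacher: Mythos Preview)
Your direction $F\Rightarrow f$ is essentially the paper's argument for (viii)$\Rightarrow$(iv): apply item~3) of {\tt(FPM)} for $F$ to $\varphi(\omega,x)=\psi(x)$ and Fubini.

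For $f\Rightarrow F$, however, the paper takes a completely different route, and your proposal has a genuine gap. The paper does \emph{not} attempt any direct pointwise argument for arbitrary bounded measurable $\varphi$. Instead it passes through the operator-theoretic characterization in Theorem~\ref{thm:BNNT}: $f$ satisfies {\tt(FPM)} $\Rightarrow$ $\mathcal{L}_f$ satisfies {\tt(FED)} (cited from~\cite{BNNT22}), then $\mathcal{L}_f$ {\tt(FED)} $\Rightarrow$ $\mathcal{L}_F$ {\tt(FED)} (Proposition~\ref{APM-FPM-skew}, a five-line computation using only the identity $\int\mathcal{L}_F^{*n}1_{\Omega\times A}\,d\mathbb{P}=\mathcal{L}_f^{*n}1_A$ and monotone convergence), and finally $\mathcal{L}_F$ {\tt(FED)} $\Rightarrow$ $F$ satisfies {\tt(FPM)} (cited again). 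The hard work of turning an $L^1$-type operator condition into pointwise Birkhoff convergence for \emph{all} bounded measurable observables is done once in~\cite{BNNT22} and then imported for both $f$ and $F$.

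Your second-moment/Borel--Cantelli argument for finite-depth cylinder products $g\otimes h$ is correct and elegant, but both extension steps fail. First, Stone--Weierstrass requires $\Omega\times X$ to carry a compact Hausdorff topology, which is nowhere assumed: the theorem only asks that $X$ be Polish, and $(T,\mathscr{A},p)$ is a bare probability space. Second, and more seriously, even granting the result for all continuous $\varphi$, the passage to bounded \emph{measurable} $\varphi$ via ``$L^1(\bar\mu_i)$-approximation plus $\bar m$-nonsingularity of $F$'' does not close. Approximating $\varphi$ by $\varphi_\epsilon$ in $L^1(\bar\mu_i)$ controls $\tfrac{1}{n}\sum_{j<n}|\varphi-\varphi_\epsilon|\circ F^j$ only $\bar\mu_i$-a.e.\ (via Birkhoff for the ergodic system $(F,\bar\mu_i)$); this coincides with $\bar m$-a.e.\ on the measure-theoretic support of $\bar\mu_i$, but says nothing off it, and {\tt(FPM)} for $f$ does not guarantee that $\bigcup_i\supp\bar\mu_i$ has full $\bar m$-measure. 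Nonsingularity $F\bar m\ll\bar m$ gives no quantitative bound on $\int|\varphi-\varphi_\epsilon|\circ F^j\,d\bar m$, so neither an $L^1(\bar m)$ nor an $L^2(\bar m)$ approximation propagates along orbits. Item~3) of {\tt(FPM)} demands the basin condition for \emph{every} bounded measurable $\varphi$; this is strictly stronger than weak$^*$ convergence of the empiricals and cannot be obtained from a countable family by density alone.
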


The above theorem results from a more complete list of equivalences that generalizes~\cite[Theorem~C]{BNNT22} characterizing {\tt(FPM)} with several properties that describe the behavior of the Perron--Frobenius operators associated with both $f$ and $F$. See~Thorem~\ref{thm:BNNT}.    

~\vspace{-1cm}
\subsection{{\tt (FPM)} for Markovian random maps}
Here we study the property {\tt (FPM)} for the skew product $F$ associated with a Markovian random map, that is, for a random map $f:T\times X \to X$ driven by an ergodic shift-invariant Markov measure $\mathbb{P}$ on $(\Omega,\mathscr{F})=(T^\mathbb{N},\mathscr{A}^\mathbb{N})$. We denote by $Q(t,A)$ the transition probability of $\mathbb{P}$ and by $p$ its initial distribution.

In the Bernoulli case, for every $x_0 \in X$, the sequence $X_n(\omega)=f^n_\omega(x_0)$, $n\geq 1$ can be viewed as a  (time-homogeneous) discrete Markov chain  on $\Omega$ with  transition probability
\begin{equation} \label{eq:kernel-P}
P(x,A) =
 p(\{t\in T:  f_t(x)\in A\}) 
\quad \text{for } \ x\in X, \ A \in \mathscr{B}.
\end{equation}
For general Markovian random maps, the sequence $\{X_n\}_{n\geq 0}$ could no longer be a discrete (time-homogeneous) Markov chain. However, the sequence of random variables $\{\hat{X}_n\}_{n\geq 0}$ given by 
\begin{equation} \label{eq:Markov-chain}
\hat{X}_n(\omega) = (\omega_{n-1}, f^n_{\omega}(x_0)), \quad n\geq 1 \quad \text{and} \quad \hat{X}_0(\omega)=(t_0,x_0)
\end{equation}
is always a homogeneous Markov chain on $\Omega$ valued in the state space $T\times X$ and with transition probability given by
\begin{equation}\label{eq:Markov-transition}
\hat{P}((t,x), B)= \int 1_B( u,f_u(x))\, Q(t,du), \quad \text{for} \ \ (t,x)\in T\times X, \ \  B\in \mathscr{A}\otimes \mathscr{B}.
\end{equation}
In the Bernoulli case, the transition probability~\eqref{eq:Markov-transition} is a coupling of $p$ and $P(x,\cdot)$, that is, $\hat{P}((t,x), A\times X) =p(A)$ and $\hat{P}((t,x), T\times B)=P(x,B)$ 
where $P(x,\cdot)$ was introduced in~\eqref{eq:kernel-P}.

The following theorem extends \cite[Theorem~A and B]{BNNT22} to Markovian random maps. Before stating it, we need to recall some classical notions. 
A transition probability $K(x,A)$ is said to be \emph{Feller continuous} and \emph{strong Feller continuous}  if the family of probabilities
$K(x,\cdot)$ varies continuously with respect to the weak* topology and
setwise convergence on the space of probabilities, respectively. An operator $P$ is called \emph{quasi-compact} on a Banach space $E$ if there is a compact linear operator $R:E \to E$ such that $\|P^n-R\|_{\rm op}<1$ for some $n\in \mathbb{N}$. Hence, the class of quasi-compact operators contains the subclass of \emph{eventually compact operators}, that is, the linear operators $P$ such that $P^n$ is compact for some $n\in\mathbb{N}$. We also introduce a notion of nonsingularity for random maps. We say that  a random map $f:T\times X \to X$ is \emph{fibered $(p,m)$-nonsingular} if $f_t$ is $m$-nonsingular for 
$p$-a.e.~$t\in T$.

\begin{mainthm} \label{mainthmA} 
Let $(X, \mathscr{B}, m)$ and $(T, \mathscr{A} , p)$ be compact Polish probability spaces, where $p$ is the unique stationary measure of a transition probability $Q(t,A)$ on $T$.  Let $\mathbb{P}$ be the Markov measure with initial distribution $p$ and kernel $Q(t,A)$ and consider a fibered $(p,m)$-nonsingular random map $f:T\times X \to X$. Let $\hat{P}((t,x),B)$ be the transition probability given in~\eqref{eq:Markov-transition}. If  $\hat{P}^{n_0}((t,x),B)$ is strong Feller for some $n_0\in \mathbb{N}$, then 
\begin{enumerate}[leftmargin=0.5cm,label=$\bullet$, itemsep=0.1cm]
    \item $F$ satisfies {\tt (FPM)} with respect to $\mathbb{P}\times m$  and
    \item the operator $\hat P : L^\infty(p\times m) \to L^\infty(p\times m)$ given by
$$
   \hat{P}\varphi(t,x)=\int \varphi(\hat{u}) \, \hat{P}((t,x),d\hat{u}), \quad \varphi \in L^\infty(p\times m)
$$
is eventually compact (namely, $\hat P^{2n_0}$ is a compact operator).
\end{enumerate}
\end{mainthm}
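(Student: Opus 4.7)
The plan is to reduce the Markovian problem to the Bernoulli setting by constructing an i.i.d.~representation of the Markov chain, applying the extended characterization of \texttt{(FPM)} from Theorem~\ref{thm:BNNT} (the full equivalences underlying Theorem~\ref{prop:iid}), and then transporting the conclusion back via a factor map. The compactness statement is handled separately via a classical potential-theoretic fact about strong Feller kernels.

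\textbf{Step 1 (i.i.d.~representation and factor map).} Since $T$ is Polish, a measurable selection/Skorokhod argument produces a Borel map $h \colon T \times [0,1] \to T$ with $h(t,\cdot)_\ast \mathrm{Leb} = Q(t,\cdot)$ for every $t \in T$. Define a Bernoulli random map $\tilde{f} \colon [0,1] \times (T \times X) \to T \times X$ by $\tilde f(\xi,(t,x)) = (h(t,\xi),\,f_{h(t,\xi)}(x))$ driven by $\tilde{\mathbb{P}} = \mathrm{Leb}^{\mathbb{N}}$ on $\Xi = [0,1]^{\mathbb{N}}$; its Koopman operator on $T\times X$ is exactly $\hat P$ from~\eqref{eq:Markov-transition}. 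The fibered $(p,m)$-nonsingularity of $f$ passes to fibered $(\mathrm{Leb},\,p\times m)$-nonsingularity of $\tilde f$, so the associated skew-product $\tilde F$ is nonsingular with respect to $\tilde{\mathbb{P}} \times (p\times m)$. Define $\pi \colon \Xi \times (T \times X) \to \Omega \times X$ by $\pi(\xi,t,x) = (\omega,x)$ with $\omega_0 = t$ and $\omega_{n+1} = h(\omega_n,\xi_n)$: a routine computation gives $\pi \circ \tilde F = F \circ \pi$ and $\pi_\ast(\tilde{\mathbb{P}} \times (p \times m)) = \mathbb{P} \times m$. The disintegration correspondence $\tilde \mu = \int \tilde\mu_t\,dp(t) \longleftrightarrow \bar\mu = \int \tilde\mu_{\omega_0}\,d\mathbb{P}(\omega)$ yields a bijection between $\hat P$-stationary measures on $T\times X$ and $\mathbb P$-Markovian $F$-invariant measures on $\Omega\times X$ which preserves ergodicity, absolute continuity and statistical basins under $\pi$; in particular, \texttt{(FPM)} for $\tilde F$ implies \texttt{(FPM)} for $F$.

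\textbf{Step 2 (Eventual compactness of $\hat P$).} By hypothesis $\hat P^{n_0}$ is strong Feller on the compact Polish space $T\times X$. A classical result of Getoor (see also Meyn--Tweedie) asserts that the composition of two strong Feller probability kernels on a Polish space is \emph{ultra Feller}, i.e.\ the map $y \mapsto \hat P^{2n_0}(y,\cdot)$ is continuous into the space of probabilities endowed with the total variation norm. For any $\varphi \in L^\infty(p\times m)$ with $\|\varphi\|_\infty \le 1$,
$$|\hat P^{2n_0}\varphi(y) - \hat P^{2n_0}\varphi(y')| \le \|\hat P^{2n_0}(y,\cdot) - \hat P^{2n_0}(y',\cdot)\|_{\mathrm{TV}},$$
so $\{\hat P^{2n_0}\varphi \colon \|\varphi\|_\infty \le 1\}$ is uniformly bounded and equicontinuous on $T \times X$. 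Arzelà--Ascoli yields compactness of $\hat P^{2n_0}$ on $C(T\times X)$ and hence on $L^\infty(p\times m)$, proving the second bullet.

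\textbf{Step 3 (From compactness to \texttt{(FPM)}).} By Schauder's theorem, compactness of $\hat P^{2n_0}$ on $L^\infty(p\times m)$ is equivalent to compactness of the predual Perron--Frobenius operator $\mathcal P^{2n_0}$ of $\tilde f$ on $L^1(p\times m)$, which in particular makes $\mathcal P$ quasi-compact. Invoking the full list of equivalences (Theorem~\ref{thm:BNNT}) for the Bernoulli random map $\tilde f$ on $(T\times X, p\times m)$, quasi-compactness of $\mathcal P$ forces $\tilde f$ to satisfy \texttt{(FPM)} with respect to $p\times m$; Theorem~\ref{prop:iid} then upgrades this to \texttt{(FPM)} for $\tilde F$ with respect to $\tilde{\mathbb{P}}\times(p\times m)$, and Step 1 transports the property to $F$ with respect to $\mathbb P \times m$.

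\textbf{Main obstacle.} The delicate point is Step 1: one has to verify meticulously that the factor map $\pi$ puts into bijection the ergodic absolutely continuous invariant components on both sides and that weak$^*$ basins push forward correctly — in particular, that ergodicity of a $\mathbb P$-Markovian measure $\bar\mu$ corresponds to ergodicity of the associated $\hat P$-stationary $\tilde\mu$ (this uses that $p$ is the unique stationary measure of $Q$, so the first marginal is fixed), and that a Birkhoff average along $F^j(\omega,x)$ equals the corresponding average along $\tilde F^j(\xi,t,x)$ via $\pi$. The nonsingularity bookkeeping between the three settings $(f,F,\tilde f, \tilde F)$ must also be kept in check throughout.
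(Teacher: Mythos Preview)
Your approach is essentially the paper's own: build the i.i.d.\ representation on $T\times X$, use strong Feller $\Rightarrow$ ultra Feller $\Rightarrow$ Arzel\`a--Ascoli to get compactness of $\hat P^{2n_0}$ (the paper outsources this to \cite[Theorems~B and~2.3]{BNNT22}), deduce quasi-compactness hence {\tt (MC)} of the Perron--Frobenius operator, apply Theorem~\ref{thm:BNNT}/Theorem~\ref{prop:iid} to get {\tt (FPM)} for the Bernoulli skew-product, and push forward via the factor map.

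One computational correction is needed in Step~1: with your convention $\tilde f(\xi,(t,x))=(h(t,\xi),f_{h(t,\xi)}(x))$, the formula $\omega_0=t$, $\omega_{n+1}=h(\omega_n,\xi_n)$ does \emph{not} give $\pi\circ\tilde F=F\circ\pi$, since $F(\pi(\xi,t,x))$ has second coordinate $f_{\omega_0}(x)=f_t(x)$ whereas $\pi(\tilde F(\xi,t,x))$ has second coordinate $f_{h(t,\xi_0)}(x)$. The fix is to shift the indexing and set $\omega_k=h(\cdot,\xi_{k-1})\circ\cdots\circ h(\cdot,\xi_0)(t)$ for $k\ge 1$ (so $\omega_0=h(t,\xi_0)$), exactly as the paper does in Lemma~\ref{lem:matias} with $\omega=(g^k_\xi(t))_{k\ge1}$. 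With this correction the semi-conjugacy holds and the rest of your argument goes through; the paper packages the ``transport along $\pi$'' part as Theorem~\ref{thm:mainthm-MC-FPM} and handles the basin inclusion via Lemma~\ref{claim:varphi-basin}, but the content is the same as what you sketch.
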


The following result extends the physical noise condition in~\cite[Remark~1.4~(b)]{BNNT22} to the setting of Markovian random maps. See Definition~\ref{def-physical-noise}.

\begin{maincor} \label{maincorA}
  Let $(X,\mathscr{B},m)$, $(T,\mathscr{A},p)$, $Q(t,A)$ and $f:T\times X \to X$ be as in Theorem~\ref{mainthmA}.  If $\hat{P}^{n_0}((t,x),B)$ is Feller continuous and $\hat{P}^{n_0}((t,x),\cdot)\ll p\times m$ for all $(t,x)\in T\times X$ and some $n_0\in \mathbb{N}$,  then $\hat{P}^{n_0}((t,x),B)$ is strong Feller and the conclusions of Theorem~\ref{mainthmA} hold.  
\end{maincor}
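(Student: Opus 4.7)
My plan is to reduce the corollary to Theorem~\ref{mainthmA} by upgrading its hypothesis ``$\hat P^{n_0}$ Feller continuous with $\hat P^{n_0}((t,x),\cdot) \ll p\times m$ for every $(t,x)$'' to the strong Feller continuity of $\hat P^{n_0}$. Since the remaining hypotheses of Theorem~\ref{mainthmA} coincide with those of the corollary, this upgrade immediately yields the two stated conclusions: $F$ satisfies {\tt (FPM)} with respect to $\mathbb{P}\times m$ and $\hat P$ is eventually compact on $L^\infty(p\times m)$.

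To establish strong Feller, I would fix an arbitrary convergent sequence $(t_n,x_n)\to(t,x)$ in $T\times X$ and let $\rho_n,\rho\in L^1(\bar m)$, with $\bar m := p\times m$, denote the Radon--Nikodym densities of $\hat P^{n_0}((t_n,x_n),\cdot)$ and $\hat P^{n_0}((t,x),\cdot)$ respectively, which exist by the absolute-continuity hypothesis. Feller continuity translates into $\int \phi\,\rho_n\,d\bar m\to \int \phi\,\rho\,d\bar m$ for every $\phi\in C(T\times X)$, whereas strong Feller (setwise convergence of the associated probability measures) is the same identity extended to every $\phi\in L^\infty(\bar m)$, equivalently the weak-$L^1(\bar m)$ convergence $\rho_n \rightharpoonup \rho$. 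By the Dunford--Pettis theorem this reduces to showing the uniform integrability of $\{\rho_n\}$; I would argue for it by combining (i) the compactness of $T\times X$, which forces any weak-$*$ cluster point of $\rho_n\,d\bar m$ to agree with $\rho\,d\bar m$ on continuous test functions, (ii) the pointwise absolute continuity of $\hat P^{n_0}((t,x),\cdot)$ at the limit point $(t,x)$, and (iii) a Vitali--Hahn--Saks/Nikodym argument that forbids a singular residue from emerging in the limit. This reproduces, in the Markovian setting, the physical-noise argument used in~\cite[Remark~1.4(b)]{BNNT22}.

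The main obstacle is precisely the uniform-integrability step: for a generic Feller family of absolutely continuous probability measures with absolutely continuous weak limit, uniform integrability of the densities can fail, so one must genuinely exploit the structural hypotheses---compactness of $T\times X$, the \emph{pointwise} absolute continuity at every point, and the specific skew-product/Markovian form of the kernel $\hat P((t,x),\cdot)$ as the pushforward of $Q(t,\cdot)$ by $u\mapsto (u,f_u(x))$---to exclude the oscillatory-concentration scenarios that obstruct the general implication. Once uniform integrability is settled, Dunford--Pettis gives weak-$L^1$ convergence $\rho_n\rightharpoonup \rho$, hence setwise convergence of $\hat P^{n_0}((t_n,x_n),\cdot)$ to $\hat P^{n_0}((t,x),\cdot)$, which is exactly strong Feller continuity, and Theorem~\ref{mainthmA} then closes the argument.
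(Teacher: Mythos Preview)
Your overall reduction is correct and matches the paper: once the Feller-plus-absolute-continuity hypothesis on $\hat P^{n_0}$ is upgraded to strong Feller, Theorem~\ref{mainthmA} applies verbatim and yields both conclusions. The divergence from the paper is in how that upgrade is carried out.

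The Dunford--Pettis route you sketch has a genuine gap at the uniform-integrability step, and the tools you name do not fill it. Vitali--Hahn--Saks takes setwise convergence of $\mu_n(A)$ for every measurable $A$ as its \emph{hypothesis} and returns uniform integrability of the densities; invoking it to \emph{produce} setwise convergence is circular. Nor does the ``no singular residue'' heuristic survive in general: on a compact metric space one can have $\rho_n\,d\bar m\to\rho\,d\bar m$ in the weak$^*$ topology with $\rho\in L^1(\bar m)$ while $\{\rho_n\}$ is not uniformly integrable and setwise convergence fails (take the identity kernel on $\{0\}\cup\{1/n:n\ge1\}$ with a fully supported atomic reference measure: each $\delta_{1/n}\ll m$, $\delta_{1/n}\to\delta_0\ll m$ weakly, yet $\delta_{1/n}(\{0\})=0\not\to1$). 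So the direct UI argument cannot close without isolating and exploiting a further structural ingredient, which your proposal only gestures at.

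The paper bypasses this entirely. It first observes, via~\eqref{eq:P-h}, that $\hat P$ is precisely the transition probability of the Bernoulli random map $h$ (the iid representation of the Markovian system $f$); the corollary's hypotheses therefore say that $\nu=\rho^{\mathbb N}$ is a \emph{physical noise} for $h$ in the sense of Definition~\ref{def-physical-noise}. The strong Feller property of $\hat P^{n_0}$ is then quoted from \cite[Proposition~2.5 and Theorem~2.6]{BNNT22}, which treat exactly this physical-noise situation in the i.i.d.\ setting. In other words, the paper does not reprove the Feller-to-strong-Feller step but imports it from the companion reference; if you want a self-contained argument you should locate and reproduce those results rather than pursue the VHS/UI route.
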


The next corollary provides a practical example of an additive noise type that allows us to apply the above corollary. 
Notice that the following example appeared in~\cite[Example~1.1]{Kifer1986}.

\begin{maincor}  \label{maincor:D}
Let $X = T$ be a compact Lie group with its normalized Lebesgue measure~$m=p$ and~$Q(t,A)$ be a transition probability given by~$Q(t,A)=p (\{ s\in T : s+t\in A\})$.
For~$f(t,x)=f_0(x)+t$
with~$f_0:X\to X$  any $C^1$ diffeomorphism,
the conclusions of Theorem~\ref{mainthmA} hold with $n_0=2$.
\end{maincor}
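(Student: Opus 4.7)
The strategy is to verify the two hypotheses of Corollary~\ref{maincorA} for $n_0=2$: Feller continuity of $\hat{P}^2((t,x),\cdot)$, and absolute continuity $\hat{P}^2((t,x),\cdot)\ll p\times m$ for every $(t,x)\in T\times X$. As a preliminary reduction, since $p$ is Haar measure on the compact Lie group $T$, the translation $s\mapsto s+t$ preserves $p$, so $Q(t,\cdot)=p$ for every $t\in T$ and the driving process is actually i.i.d.\ (the fibered nonsingularity assumption also holds trivially, as each $f_t(x)=f_0(x)+t$ is a $C^1$ diffeomorphism of $X$). Plugging $f(u,y)=f_0(y)+u$ into~\eqref{eq:Markov-transition} and iterating twice yields, for every bounded measurable $\varphi:T\times X\to\mathbb{R}$,
\begin{equation*}
\hat{P}^2\varphi(t,x)=\iint \varphi\bigl(\omega_1,\,f_0(f_0(x)+\omega_0)+\omega_1\bigr)\,dp(\omega_0)\,dp(\omega_1),
\end{equation*}
an expression that in fact does not depend on $t$.

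Feller continuity is then immediate: if $\varphi$ is continuous and bounded, continuity of $f_0$ together with dominated convergence shows that $\hat{P}^2\varphi$ is continuous in $(t,x)$. The heart of the argument, and the step I expect to be the main obstacle, is absolute continuity. To handle it, I would introduce the map $g_x:T\times T\to T\times T$ defined by
\begin{equation*}
g_x(\omega_0,\omega_1)=\bigl(\omega_1,\,f_0(f_0(x)+\omega_0)+\omega_1\bigr),
\end{equation*}
so that $\hat{P}^2((t,x),\cdot)=(g_x)_*(p\times p)$. This map is a continuous bijection (the first coordinate recovers $\omega_1$, and the second then determines $f_0(f_0(x)+\omega_0)$, and hence $\omega_0$, by injectivity of $f_0$), and in local coordinates its differential is the block matrix
\begin{equation*}
Dg_x(\omega_0,\omega_1)=\begin{pmatrix} 0 & I \\ Df_0(f_0(x)+\omega_0) & I \end{pmatrix},
\end{equation*}
of determinant $\pm\det Df_0(f_0(x)+\omega_0)\neq 0$. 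The inverse function theorem combined with compactness of $T\times T$ then shows that $g_x$ is a $C^1$ diffeomorphism, whence $(g_x)_*(p\times p)\ll p\times p=p\times m$.

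With both hypotheses verified, Corollary~\ref{maincorA} yields the conclusions of Theorem~\ref{mainthmA}. It is worth noting why the choice $n_0=2$ is optimal: the one-step kernel $\hat{P}((t,x),\cdot)$ is the law of $(\omega_0,f_0(x)+\omega_0)$, which is concentrated on the graph of $s\mapsto f_0(x)+s$, a subset of $T\times T$ of zero $p\times m$-measure whenever $\dim T\geq 1$. A single composition is therefore never absolutely continuous with respect to the product reference measure, and the Jacobian computation above shows that two compositions precisely suffice to smear out this rigid coupling between the two coordinates.
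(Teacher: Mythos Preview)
Your proof is correct and follows essentially the same route as the paper: both arguments verify the hypotheses of Corollary~\ref{maincorA} for $n_0=2$ by computing a Jacobian to show that the two-step kernel is the pushforward of $p\times p$ under a map with everywhere nonsingular derivative, hence absolutely continuous. The paper packages this via an intermediate general result (Theorem~\ref{thm:thm proof cor D}) that works through the i.i.d.\ representation $h_s(t,x)=(g_s(t),f_{g_s(t)}(x))$ with $g_s(t)=s+t$, shows that the map $H_{(t,x)}(s_1,s_2)=h_{s_2}\circ h_{s_1}(t,x)$ is a submersion, and then invokes a result of Bena\"im--Tough to obtain absolute continuity; your argument bypasses the i.i.d.\ representation entirely and works directly with $\hat P$, using the observation $Q(t,\cdot)=p$ to write $\hat P^2((t,x),\cdot)=(g_x)_*(p\times p)$ and a self-contained change-of-variables argument. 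The two Jacobian computations are related by the affine reparametrization $(\omega_0,\omega_1)=(s_1+t,\,s_1+s_2+t)$, so the core calculation is the same; your version is slightly more elementary in that it avoids the external reference, while the paper's version isolates a reusable general criterion.
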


We prove Corollary~\ref{maincor:D} by showing Theorem~\ref{thm:thm proof cor D}, a general version of the corollary.

\subsection{{\tt (fpm)} for mostly contracting Markovian random maps} \label{sec:lyapunov-def}

A natural question proposed in~\cite{BNNT22} was whether {\tt(FPM)} could be weakened by asking finitely many physical ergodic stationary measures instead of finitely many absolutely continuous ergodic stationary measures. Namely, it was introduced the property  \emph{{\tt (fpm)}  with respect to $m$ for a Bernoulli random map $f$} as the existence of finitely many ergodic $f$-stationary probability measures $\mu _1,\dots, \mu_r$ on $X$  such that
\begin{itemize} 
\item[1')] $\mu_i$ is a physical measure with
respect to $m$ for $i = 1,\dots,r$;
\item[2')]  they have pairwise disjoint supports (up to an $m$-null set);
\item[3')] $m(B_\omega(\mu_i)\cup \dots \cup B_\omega(\mu_r))=1$ for $\mathbb{P}$-a.e.~$\omega \in \Omega$, where 
\begin{equation*}\label{eq:basin-random}
\qquad B_\omega(\mu _i)=\bigg\{ x\in X : \, \lim _{n\to \infty} \frac{1}{n} 
\sum_{j=0}^{n-1} \delta_{f_{ \omega}^{j}(x)} = \mu _i
\bigg\} \quad \text{for $i=1,\dots,r$}.
\end{equation*}
\end{itemize}
In~\cite{Mal:17}, it was shown that a random map $f$ on a compact metric phase space $(X,d)$ having the property of \emph{local contraction} satisfies {\tt (fpm)}. This property means that for every $x \in X$, and for $\mathbb P$-a.e.~$\omega\in \Omega$, there is a neighborhood $B$ of $x$ such that $\mathrm{diam} f^n_\omega(B) \to 0$ as $n\to \infty$. Later in~\cite{BM24}, the authors showed that also the skew-product $F$ associated with $f$ satisfies the following property of finitude of physical measures in the Bernoulli case: 

\begin{defi}\label{dfn:22}
Let \(\mathbb{P}\) be an ergodic shift-invariant Markov measure on \(\Omega\). We say that the skew-product \(F\) associated with  \(f\) satisfies \emph{{\tt (fpm)} with respect to \(\bar{m}=\mathbb{P}\times m\)} if there is finitely many ergodic \(F\)-invariant probability measures \(\bar{\mu}_1,\dots,\bar{\mu}_r\) on \(\Omega \times X\) such that for each \(i=1,\ldots,r\)
\begin{enumerate}[label=\arabic*)]
  \item  \(\bar{\mu}_i\) is a \(\mathbb{P}\)-Markovian measure, i.e., \(d\bar{\mu}_i = \mu_{i,\omega_0}\,d\mathbb{P}(\omega)\);   
  \item $\bar{m}(B(\bar{\mu}_i))>0$, i.e., $\bar{\mu}_i$ is a physical measure with respect to $\bar{m}=\mathbb{P}\times m$; and
  \item $\bar{m}\left(B(\bar{\mu}_1)\cup \dots \cup B(\bar{\mu}_r)\right)=1$.
\end{enumerate} 
\end{defi}

From the remarks after Definition~\ref{dfn:11}, we have that {\tt (FPM)} implies {\tt (fpm)}.  An important class of Bernoulli random maps that satisfy the local contraction property is the \emph{mostly contracting random maps} introduced in~\cite{BM24}.  As a consequence, {\tt (fpm)} is satisfied for such random maps and their associated skew-product. To properly introduce this class of random maps, we need some definitions.  In what follows, $(X,d)$ denotes a compact metric space.

A function {$g:X\to X$} is said to be \emph{Lipschitz} if there is $C\geq 0$ such that $d(g(x),g(y))\leq C d(x,y)$. The infimum of $C\geq 0$ such that the above inequality holds is called \emph{Lipschitz constant} and denoted by~$\mathrm{Lip}(g)$.
The \emph{locally Lipschitz constant} of  $g$ at $z\in X$ is defined as
\begin{align*}
   Lg(z) \eqdef \inf_{r>0} L_rg(z) = \lim_{r\to 0^+} L_rg(z)
 \end{align*}
with
\begin{align*}
   L_rg(z)\eqdef \mathrm{Lip}(g|_{B(z,r)})=\sup\left\{   \frac{d(g(x),g(y))}{d(x,y)} \, : \ x,y\in B(z,r), \ x\not=y \right \},
\end{align*}
where $B(z,r)$ denotes the open ball centered at $z$ and of radius $r$.
We have that $\mathrm{Lip}(g)$ is greater than or equal to the supremum of $Lg(z)$ over all $z\in X$. 
Moreover, if $g$ is a diffeomorphism of a Riemannian manifold $X$, then $Lg(z) = \|Dg(z)\|$.

Let \(f:T\times X \to X\) be a random map driven by an ergodic shift-invariant Markov measure $\mathbb{P}$ on $\Omega=T^\mathbb{N}$.  We say that $f$ is \emph{Lipschitz random map} if $\mathrm{Lip}(f_{\omega_0})<\infty$ for $\mathbb{P}$-a.e.~$\omega\in \Omega$.  For each \(\omega \in \Omega\) and \(x \in X\), define \(L_n(\omega,x) = Lf_\omega^n(x)\). By the chain rule, we derive the inequality, $\log L_{n+m} \leq  \log L_n \circ F^m + \log L_m$. Hence, 
under the integrability condition
\begin{equation} \label{eq:integrability}
   \int \log^+ \mathrm{Lip}(f_{\omega_0} ) \, d\mathbb{P}(\omega) <\infty,
\end{equation}
for any $F$-invariant probability measure $\bar{\mu}$ on $\Omega\times X$ with first marginal $\mathbb{P}$,  by Kingman's subadditive ergodic theorem~\cite[Theorem~I.1]{ruelle1979ergodic}, there exists an $F$-invariant measurable function $\lambda: \Omega\times X \to \mathbb{R}\cup \{-\infty\}$ such that $\lambda^+\eqdef \max\{0,\lambda\}$ is $\bar{\mu}$-integrable,
\[
     \lambda(\omega,x)=\lim_{n\to\infty} \frac{1}{n} \log Lf^n_\omega(x) \quad \text{$\bar{\mu}$-almost every $(\omega,x)\in \Omega\times X$}.
\]
The \emph{maximal Lyapunov exponent} of \(\bar\mu\) is given by
$$
   \lambda(\bar{\mu})\eqdef \inf_{n \ge 1} \frac{1}{n} \int \log Lf^n_\omega(x)\, d\bar{\mu} = \lim_{n\to \infty} \frac{1}{n} \int \log Lf^n_\omega(x) \, \, d\bar{\mu} = \int \lambda(\omega,x) \, d\bar{\mu} \in [-\infty,\infty).
$$
If $X$ is a Riemannian manifold,
$\lambda(\bar{\mu})$ coincides with the maximal average Lyapunov exponent from Oseledets' theorem for the linear cocycle over the skew-product $F$.

\begin{defi} Let \(\mathbb{P}\) be an ergodic shift-invariant Markov measure on \(\Omega=T^\mathbb{N}\). 
A  random map $f:T\times X \to X$ is 
\emph{$\mathbb{P}$-mostly contracting} if the integrability condition~\eqref{eq:integrability} holds and 
$$
\lambda(f)\eqdef \sup \{\lambda(\bar{\mu}):  \ \bar{\mu} \ \text{is a $\mathbb{P}$-Markovian measure} \} <0.
$$
\end{defi}
Notice that the above notion of mostly contracting coincides with the definition previously introduced in~\cite[Definition~1.1]{BM24} for Bernoulli random maps.  
The following result extends one of the main results in~\cite[Theorem~B and item~(b) of Corollary~I]{BM24} for Markovian random maps. Recall that $C^{\alpha}(Y)$ denotes the Banach space of real-valued $\alpha$-H\"older continuous maps on a metric space $Y$, where $0<\alpha \leq 1$. 

\begin{mainthm} \label{cor:Markovian-qc} Let $\mathbb{P}$ be an ergodic shift-invariant Markov probability on $\Omega=T^\mathbb{N}$, where $T$ is a finite set and consider a compact metric space $(X,d)$. 
If  $f:T \times X \to  X$ is a $\mathbb{P}$-mostly contracting random map, then its associated skew-product  $F:\Omega \times X \to \Omega \times X$ satisfies {\tt (fpm)} with respect to $\bar{m}=\mathbb{P}\times m$, where $m$ is any reference probability measure on~$X$. 

Moreover, there is $\alpha_0 > 0$ such for  any $0 < \alpha \leq  \alpha_0$, the operator $\hat{P}:C^\alpha(T\times X)\to C^\alpha(T\times X)$ associated with the transition probability $\hat P((t,x),B)$ in~\eqref{eq:Markov-transition}, 
$$
   \hat{P}\varphi(t,x)=\int \varphi(\hat{u}) \, \hat{P}((t,x),d\hat{u}), \qquad \varphi\in C^\alpha(T\times X),
$$
is quasi-compact.
\end{mainthm}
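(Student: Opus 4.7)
The plan is to realize $(f,\mathbb{P})$ as a measurable factor of a \emph{Bernoulli} random map on the enlarged phase space $Y=T\times X$, and then invoke the mostly contracting results of~\cite{BM24}. Since $T$ is finite, I would take $\tilde T=T^T$ with the product measure $\tilde p=\prod_{t\in T} Q(t,\cdot)$, so that the evaluations $\tau\mapsto\tau(t)$ are independent with $\tau(t)\sim Q(t,\cdot)$. Writing $\tilde{\mathbb{P}}=\tilde p^\mathbb{N}$, the chain $\omega_0=t$, $\omega_{n+1}=\tau_n(\omega_n)$ has law $\mathbb{P}$ when $t\sim p$, so the map $(\tau,t)\mapsto \omega(\tau,t)$ pushes $\tilde{\mathbb{P}}\times p$ to $\mathbb{P}$. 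Endowing $T$ with the discrete metric, I would introduce on $Y$ the two Bernoulli random maps
\[
    \tilde f^{(1)}_\tau(t,x)=(\tau(t),f_t(x)), \qquad \tilde f^{(2)}_\tau(t,x)=(\tau(t),f_{\tau(t)}(x)),
\]
with local Lipschitz constants $L\tilde f^{(1)}_\tau(t,x)=Lf_t(x)$ and $L\tilde f^{(2)}_\tau(t,x)=Lf_{\tau(t)}(x)$. A direct computation shows that the Bernoulli Markov operator of $\tilde f^{(2)}$ coincides with $\hat P$.

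The core structural step is to identify Bernoulli stationary measures with $\mathbb{P}$-Markovian ones in a Lyapunov-preserving way. For $\tilde f^{(1)}$ this is tautological: a stationary measure with disintegration $d\mu_t\,dp(t)$ matches the $\mathbb{P}$-Markovian measure with family $\{\mu_t\}$, and both Lyapunov exponents reduce to $\sum_t p(t)\int \log Lf_t\,d\mu_t$. For $\tilde f^{(2)}$ the invariance equation reads $p(u)\mu^\ast_u=(f_u)_*\sum_t p(t)Q(t,u)\mu^\ast_t$, and I would observe that the normalized incoming average $\rho_u=\sum_t p(t)Q(t,u)\mu^\ast_t/p(u)$ is itself a $\mathbb{P}$-Markovian family satisfying $\mu^\ast_u=(f_u)_*\rho_u$; once again both Lyapunov exponents reduce to $\sum_t p(t)\int \log Lf_t\,d\rho_t$. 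Hence the hypothesis $\lambda(f)<0$ propagates to both Bernoulli maps, the integrability~\eqref{eq:integrability} is automatic from the finiteness of $T$, and \cite[Theorem~B and item~(b) of Corollary~I]{BM24} applies to each.

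Applying this Bernoulli result to $\tilde f^{(2)}$ directly yields the quasi-compactness of $\hat P$ on $C^\alpha(Y)$ for some $\alpha_0>0$ and all $\alpha\leq\alpha_0$, proving the second assertion. Applied to $\tilde f^{(1)}$, it produces {\tt(fpm)} for the Bernoulli skew-product $\tilde F^{(1)}$ on $\tilde\Omega\times Y$ with respect to $\tilde{\mathbb{P}}\times p\times m$, furnishing finitely many ergodic physical Bernoulli measures $\tilde{\mathbb{P}}\times\tilde\mu^{(1)}_i$. The continuous projection $\pi(\tau,t,x)=(\omega(\tau,t),x)$ satisfies $\pi\circ\tilde F^{(1)}=F\circ\pi$ (indeed $\omega(\sigma\tau,\tau_0(t))=\sigma\omega(\tau,t)$ and $\omega(\tau,t)_0=t$, so both sides equal $(\sigma\omega(\tau,t),f_t(x))$), pushes $\tilde{\mathbb{P}}\times p\times m$ to $\bar m$, and sends $\tilde{\mathbb{P}}\times\tilde\mu^{(1)}_i$ to the $\mathbb{P}$-Markovian $\bar\mu_i$ with family $\{\mu_{i,t}\}$. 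Ergodicity and the Markovian disintegration transfer through the factor map $\pi$; continuity of $\pi$ gives $B_{\tilde F^{(1)}}(\tilde{\mathbb{P}}\times\tilde\mu^{(1)}_i)\subseteq\pi^{-1}B_F(\bar\mu_i)$, so the full-$(\tilde{\mathbb{P}}\times p\times m)$ Bernoulli basin coverage pushes through $\pi$ to give $\bar m(\bigcup_i B_F(\bar\mu_i))=1$; physicality of each $\bar\mu_i$ follows since each lifted basin has positive reference measure.

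The main obstacle I anticipate is that the natural projection $\pi$ above works cleanly for $\tilde f^{(1)}$ but not for $\tilde f^{(2)}$---the position update by $f_{\tau(t)}$ rather than by $f_t$ breaks the intertwining---so one cannot route both conclusions through a single representation and must juggle the two. A secondary delicate point, needed to keep the approach free of invertibility assumptions on each $f_t$, is the verification that the family $\{\rho_u\}$ associated to any $\tilde f^{(2)}$-stationary measure satisfies the Markovian equation $p(s)\rho_s=\sum_t p(t)Q(t,s)(f_t)_*\rho_t$; this is what makes the Lyapunov correspondence for $\tilde f^{(2)}$ survive despite the pushforward $(f_u)_*$ appearing on the right-hand side of its invariance relation.
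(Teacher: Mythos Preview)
Your overall strategy coincides with the paper's: build a Bernoulli (iid) representation on $T\times X$, verify that mostly contracting transfers, invoke \cite{BM24}, and push the conclusions back to $F$ through a semiconjugacy. The paper uses a single representation $h_s(t,x)=(g_s(t),f_{g_s(t)}(x))$---your $\tilde f^{(2)}$, but with noise parametrised by $S=[0,1]$ rather than $T^T$---and routes both the quasi-compactness of $\hat P$ and the {\tt (fpm)} statement through it. Your choice of noise space $T^T$ is a genuine simplification: it makes the semiconjugacy $\pi$ continuous, so basins push forward directly, whereas the paper must prove separately (Corollary~\ref{cor:basin}) that the discontinuity set of its $\pi$ is $\nu$-null. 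Your introduction of a second representation $\tilde f^{(1)}$ is harmless but unnecessary; the correspondence $\mu^\ast_u\mapsto\rho_u$ you build for $\tilde f^{(2)}$ is exactly the map $\Xi$ of Theorem~\ref{thm:Matias}, and the ``secondary delicate point'' you flag is Theorem~\ref{thm:Matias2}.

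There is, however, a genuine error in the Lyapunov step. You assert that both the Bernoulli and the Markovian maximal Lyapunov exponents ``reduce to $\sum_t p(t)\int \log Lf_t\,d\mu_t$'' (resp.\ $d\rho_t$). For general Lipschitz maps on a compact metric space this is false: the cocycle $\log Lf^n_\omega(x)$ is only \emph{sub}additive (cf.\ the paragraph preceding~\eqref{eq:integrability}), so $\lambda(\bar\mu)=\inf_n \tfrac1n\int\log Lf^n_\omega\,d\bar\mu$ is typically strictly smaller than the $n=1$ integral you wrote, and matching the $n=1$ terms proves nothing. What you actually need is the equality $\lambda(\tilde{\mathbb P}\times\tilde\mu)=\lambda(\bar\mu)$, and this follows from the pointwise identity $L\tilde f^{(i),n}_\tau(t,x)=Lf^n_{\omega(\tau,t)}(x)$ (using that $T$ is discrete) together with $\pi_*(\tilde{\mathbb P}\times\tilde\mu)=\bar\mu$, giving equality of the integrands term-by-term for every $n$; this is precisely the content of Lemma~\ref{lem:Mark-lip-iid} and Proposition~\ref{prop:Lyapunov}. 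Once you replace the $n=1$ formula by this argument, the rest of your proof is correct.
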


In dimension one, as a consequence of the \emph{Markovian invariant principle} (see Theorem~\ref{thm:Markovian-inv-principle}), we can naturally apply the above theorem for non-degenerate Markovian random walks on the semigroup of $C^1$ diffeomorphisms of the circle $\mathbb{S}^1$ as shown in the following corollary.
Recall that, for a measurable map $g:X_1\to X_2$ and a measure $\nu$ on $X_1$, the pushforward measure of $\nu$ by $g$ is the measure on $X_2$ defined by $g\nu\left(E\right)=\nu(g^{-1}E)$ for a measurable set $E$ of~$X_2$.

\begin{maincor} \label{cor:mostly-contraction-circle}
Let $\mathbb{P}$ be an ergodic shift-invariant Markov probability on $\Omega=T^\mathbb{N}$ and consider a random map $f: T \times \mathbb{S}^1 \to \mathbb{S}^1$ of  $C^1$ diffeomorphisms of the circle such that $\log^+ \|f'_{\omega_0}\|$ is $\mathbb{P}$-integrable and there is no
measurable family of probability measures $\{\mu_t\}_{t\in T}$ on $\mathbb{S}^1$ such that 
\begin{equation} \label{eq:condition-no-comon-invariant}
    f_{\omega_0}\mu_{\omega_0}=\mu_{\omega_1} \quad \text{for $\mathbb{P}$-a.e.~$\omega=(\omega_i)_{i\geq 0} \in \Omega$}.
\end{equation}
Then $f$ is $\mathbb{P}$-mostly contracting. Moreover, if $T$ is finite, the conclusions in Theorem~\ref{cor:Markovian-qc} hold for $f$.
\end{maincor}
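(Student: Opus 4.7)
The plan is to combine the Markovian invariance principle (Theorem~\ref{thm:Markovian-inv-principle}) with Theorem~\ref{cor:Markovian-qc}. Since $\mathrm{Lip}(f_{\omega_0}) = \|f'_{\omega_0}\|$ for $C^1$ circle diffeomorphisms, the hypothesis $\log^+\|f'_{\omega_0}\|\in L^1(\mathbb{P})$ is exactly the integrability condition~\eqref{eq:integrability}, so the maximal Lyapunov exponent $\lambda(\bar\mu)\in[-\infty,\infty)$ is well-defined for every $\mathbb{P}$-Markovian measure $\bar\mu$.

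Suppose for contradiction that some $\mathbb{P}$-Markovian $F$-invariant probability measure $\bar\mu$ on $\Omega\times\mathbb{S}^1$ satisfies $\lambda(\bar\mu)\geq 0$, and write its disintegration as a measurable family $\{\mu_t\}_{t\in T}$ via $d\bar\mu(\omega,x)=d\mu_{\omega_0}(x)\,d\mathbb{P}(\omega)$. A direct computation from the $F$-invariance of $\bar\mu$ combined with the Markov property of $\mathbb{P}$ yields only an \emph{averaged} transport relation linking $f_t\mu_t$ to $\mu_s$ through the kernel $Q(t,ds)$. The content of the Markovian invariance principle, in the spirit of the classical Crauel--Antonov--Malicet invariance principle for i.i.d.~compositions of circle diffeomorphisms, is precisely that non-negativity of $\lambda(\bar\mu)$ \emph{upgrades} this averaged identity to the pathwise equivariance $f_{\omega_0}\mu_{\omega_0}=\mu_{\omega_1}$ for $\mathbb{P}$-a.e.~$\omega$. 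This produces exactly the measurable family forbidden by hypothesis~\eqref{eq:condition-no-comon-invariant}, a contradiction. Hence $\lambda(\bar\mu)<0$ for every $\mathbb{P}$-Markovian measure $\bar\mu$.

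To strengthen this pointwise negativity to $\lambda(f)=\sup_{\bar\mu}\lambda(\bar\mu)<0$, I would combine the weak-$*$ compactness of the set of $\mathbb{P}$-Markovian $F$-invariant measures on $\Omega\times\mathbb{S}^1$ (tightness on the $\Omega$-coordinate from the fixed marginal $\mathbb{P}$, compactness of $\mathbb{S}^1$ on the fiber, and closedness of the Markovian-disintegration and $F$-invariance constraints) with upper semicontinuity of $\bar\mu\mapsto\lambda(\bar\mu)$, immediate from the infimum representation $\lambda(\bar\mu)=\inf_{n\ge1}\tfrac{1}{n}\int\log Lf^n_\omega(x)\,d\bar\mu$. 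The supremum is then attained and strictly negative, so $f$ is $\mathbb{P}$-mostly contracting. The second assertion is then a direct application of Theorem~\ref{cor:Markovian-qc} when $T$ is finite.

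The main obstacle is the correct deployment of the Markovian invariance principle: one must verify its hypotheses in the $C^1$-circle setting and confirm that it genuinely delivers the pathwise equivariance of~\eqref{eq:condition-no-comon-invariant}, not merely a weaker Markov-averaged version that would fail to contradict the hypothesis. A secondary, more routine, subtlety is the upper semicontinuity argument used to pass from pointwise to uniform strict negativity of the Lyapunov exponents across the full family of $\mathbb{P}$-Markovian measures.
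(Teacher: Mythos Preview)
Your overall strategy is correct and matches the paper's: use the Markovian invariance principle (Theorem~\ref{thm:Markovian-inv-principle}) to rule out $\lambda(\bar\mu)\ge 0$, then invoke Theorem~\ref{cor:Markovian-qc}. There is, however, one genuine gap. Theorem~\ref{thm:Markovian-inv-principle} is stated only for \emph{ergodic} $\mathbb{P}$-Markovian $F$-invariant measures; you apply it to an arbitrary $\bar\mu$. The paper handles this by first passing through the ergodic decomposition: writing a general $\mathbb{P}$-Markovian $\bar\mu$ as $\int\bar\nu\,d\alpha(\bar\nu)$ over ergodic $\mathbb{P}$-Markovian components, applying Theorem~\ref{thm:Markovian-inv-principle} to each $\bar\nu$ to get $\lambda(\bar\nu)<0$ (since the equivariant family~\eqref{eq:condition-no-comon-invariant} is excluded by hypothesis), and concluding $\lambda(\bar\mu)=\int\lambda(\bar\nu)\,d\alpha(\bar\nu)<0$. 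Your contradiction argument is easily repaired in the same spirit: if $\lambda(\bar\mu)\ge 0$, integrating over the ergodic decomposition forces some ergodic component $\bar\nu$ to satisfy $\lambda(\bar\nu)\ge 0$, and the invariance principle then legitimately applies to $\bar\nu$.

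On your final step: the paper in fact stops after establishing $\lambda(\bar\mu)<0$ for every individual $\bar\mu$ and simply asserts that $f$ is $\mathbb{P}$-mostly contracting, so your compactness/upper-semicontinuity argument to secure $\sup_{\bar\mu}\lambda(\bar\mu)<0$ is more scrupulous than the paper on this point. Be aware, though, that for general $T$ (the first half of the corollary does not assume $T$ finite) neither the weak-$*$ compactness of the set of $\mathbb{P}$-Markovian measures nor the upper semicontinuity of $\bar\mu\mapsto\frac{1}{n}\int\log Lf^n_\omega(x)\,d\bar\mu$ is automatic: the integrand need not be bounded above or continuous on $\Omega\times\mathbb{S}^1$ without further structure on $T$ and on $t\mapsto f_t$.
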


Since interval diffeomorphisms onto their image can be extended to circle diffeomorphisms, from the previous proposition, we obtain a similar result for the class of random maps of $C^1$ diffeomorphisms onto their image of a compact interval $\mathbb{I}$ of the real line. Moreover, in the case when $T$ is finite, we can also simplify~\eqref{eq:condition-no-comon-invariant} as follows. 
Recall that a transition probability $Q(t,A)$ is said to be \emph{irreducible} if there is a probability measure $\nu$ on $T$ such that for every set \( A \in \mathscr{A} \) with \( \nu(A) > 0 \), and for all \( t \in T \), there exists \( n \in \mathbb{N} \)  such that \( Q^n(t, A) > 0 \).

\begin{maincor} \label{cor:interval}
Let $T$ be a finite set and consider an ergodic shift-invariant Markov probability  $\mathbb{P}$ on $\Omega=T^\mathbb{N}$ with an irreducible transition probability $Q(t,A)$ on $T$. Let $f: T \times \mathbb{I} \to \mathbb{I}$ be a random map of $C^1$ interval diffeomorphisms onto its image. Assume that there is no collection of sets of two points $\{A_t\}_{t\in T} \subset \mathbb{I}$, such that
$f_{t}(A_t)=A_a$ if $Q(t,\{a\})>0$. 
Then $f$ is $\mathbb{P}$-mostly contracting and satisfies the conclusions of Theorem~\ref{cor:Markovian-qc}.
\end{maincor}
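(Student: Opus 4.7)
The approach is to reduce to Corollary~\ref{cor:mostly-contraction-circle} by extending each interval diffeomorphism to a $C^1$ circle diffeomorphism, and then translating the two-point hypothesis on~$\mathbb{I}$ into the absence of a measurable invariant family of probability measures on~$\mathbb{S}^1$ for the extended random map.

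For each $t\in T$, I would fix a $C^1$ extension $\tilde f_t\colon\mathbb{S}^1\to\mathbb{S}^1$ of~$f_t$, obtained by realizing~$\mathbb{I}$ as a proper closed sub-arc of~$\mathbb{S}^1$ and interpolating~$f_t$ smoothly across the complementary arc via bump functions; this standard construction yields $\tilde f_t|_{\mathbb{I}}=f_t$ and, since $f_t(\mathbb{I})\subseteq\mathbb{I}$, makes~$\mathbb{I}$ forward invariant under each~$\tilde f_t$. Because $T$ is finite, $\log^+\|\tilde f'_{\omega_0}\|$ is uniformly bounded and hence $\mathbb{P}$-integrable, so Corollary~\ref{cor:mostly-contraction-circle} will apply to~$\tilde f$ as soon as its no-invariant-measure-family hypothesis is verified. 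I would therefore assume, for contradiction, that some measurable family $\{\nu_t\}_{t\in T}$ of probability measures on~$\mathbb{S}^1$ satisfies $\tilde f_{\omega_0}\nu_{\omega_0}=\nu_{\omega_1}$ for $\mathbb{P}$-a.e.~$\omega\in\Omega$.

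The key step is to extract a forbidden two-point invariant family in~$\mathbb{I}$ from such a family~$\{\nu_t\}$. Since $\tilde f_t^{-1}(\mathbb{I})\supseteq\mathbb{I}$, invariance gives $\nu_a(\mathbb{I})\ge\nu_t(\mathbb{I})$ whenever $Q(t,\{a\})>0$, so by irreducibility of~$Q$ the value $c:=\nu_t(\mathbb{I})$ is independent of~$t$ and $\nu_t$ must vanish on $\tilde f_t^{-1}(\mathbb{I})\setminus\mathbb{I}$. I would then observe that the two-point hypothesis already forces $f_t$ to be non-onto~$\mathbb{I}$ for some~$t$---otherwise $\{A_t=\partial\mathbb{I}\}_{t\in T}$ would itself be the forbidden family---so by irreducibility the iterated preimages $(\tilde f^n_\omega)^{-1}(\mathbb{I})$ exhaust~$\mathbb{S}^1$ up to a $\nu_t$-null set for $\mathbb{P}$-typical~$\omega$, forcing $c=1$, that is, $\supp\nu_t\subseteq\mathbb{I}$ for every~$t$. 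Monotonicity of each $C^1$ interval diffeomorphism~$f_t$ then implies that $A_t:=\{\min\supp\nu_t,\max\supp\nu_t\}$ satisfies $f_t(A_t)=A_a$ whenever $Q(t,\{a\})>0$, and when the two extrema are distinct one obtains the forbidden two-point family.

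The main obstacle is the degenerate case where each~$\nu_t$ is a single Dirac $\delta_{x_t}$ with $f_t(x_t)=x_a$. To handle it, I would invoke the Markovian invariant principle (Theorem~\ref{thm:Markovian-inv-principle}) to conclude that this Dirac family carries nonnegative maximal Lyapunov exponent, making~$\{x_t\}$ a Markovian repeller; the one-dimensionality of~$\mathbb{I}$ combined with monotonicity of each~$f_t$ then allows the construction of a Markovian one-point attractor on each of the two sides of~$x_t$, and the pair of these attractor points yields the forbidden two-point invariant family. With the contradiction established, Corollary~\ref{cor:mostly-contraction-circle} gives that~$\tilde f$ is $\mathbb{P}$-mostly contracting and satisfies the conclusions of Theorem~\ref{cor:Markovian-qc}, and the forward invariance of~$\mathbb{I}$ together with $\tilde f_t|_{\mathbb{I}}=f_t$ ensures that every $\mathbb{P}$-Markovian measure for~$f$ corresponds to one for~$\tilde f$ supported in~$\Omega\times\mathbb{I}$ with the same maximal Lyapunov exponent, so the conclusions transfer to~$f$.
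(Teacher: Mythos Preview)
Your overall strategy---extend to the circle and reduce to Corollary~\ref{cor:mostly-contraction-circle}---matches the paper's, but the paper takes a shorter route that sidesteps the complications you create. Rather than verifying the no-invariant-family hypothesis of Corollary~\ref{cor:mostly-contraction-circle} for~$\tilde f$ on all of~$\mathbb{S}^1$, the paper assumes for contradiction that~$f$ itself (on~$\mathbb{I}$) is not $\mathbb{P}$-mostly contracting, obtains an ergodic $\mathbb{P}$-Markovian measure~$\bar\mu$ with~$\lambda(\bar\mu)\ge0$ already supported on~$\Omega\times\mathbb{I}$, extends it trivially to~$\Omega\times\mathbb{S}^1$, and applies the invariant principle (Theorem~\ref{thm:Markovian-inv-principle}) directly to that measure. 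The resulting family~$\{\mu_t\}$ is then automatically supported in~$\mathbb{I}$, so your whole argument about~$\nu_t(\mathbb{I})$ being constant and preimages of~$\mathbb{I}$ exhausting~$\mathbb{S}^1$ (which, incidentally, requires a more carefully chosen extension than generic bump-function interpolation) becomes unnecessary.

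Your handling of the degenerate Dirac case contains a genuine error: Theorem~\ref{thm:Markovian-inv-principle} asserts that $\lambda(\bar\mu)\ge0$ implies the invariance condition, not the converse, and the dichotomy is not exclusive. An invariant Dirac family~$\{\delta_{x_t}\}$ can perfectly well have negative maximal exponent (e.g.\ at a family of attracting fixed points), so you cannot conclude it is a repeller, and the subsequent construction of two one-sided attractors is unfounded. In fact this case needs no special treatment: the hypothesis ``no collection of sets of two points~$\{A_t\}$'' is meant to include the degenerate situation where the two points coincide, as the paper's discussion immediately after the statement makes explicit (``the condition simplifies taking $x_t=y_t$''). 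With that reading, the set~$A_t=\{\min\supp\mu_t,\max\supp\mu_t\}$ already violates the hypothesis whether or not the extrema are distinct, and the paper's proof proceeds exactly this way without singling out the Dirac case.
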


A random map $f$ in the assumption of the above corollary is just a one-dimensional confined nonlinear random walk in a finite tuple of diffeomorphisms onto its image of a compact interval driven by an irreducible Markov chain.
The condition on the collection of points can be read as the \emph{nonexistence of a finite invariant orbit} under the random dynamics. In the case that all the maps $f_t$ preserve orientation, the condition simplifies taking $x_t=y_t$, where $A_t=\{x_t,y_t\}$.  
The nonexistence of such a finite invariant orbit is a generic condition on the space of random maps of interval diffeomorphisms onto their image. Thus, for such one-dimensional confined generic non-linear Markovian random walks, Corollary~\ref{cor:interval} provides a 
geometric characterization of their ergodic stationary measures and proves that all of them have negative Lyapunov exponents. Moreover, these measures lift to physical measures for the associated canonical skew-products over a subshift of finite type and the union of its basin of attraction covers the full space modulo reference measure. A similar description of such systems was previously obtained by Kleptsyn and Volk in~\cite{kleptsyn2014physical} using totally different methods. In~\cite{kleptsyn2014physical}, the transition probability is assumed primitive (i.e., there exists $n\geq 1$ such $Q^n(t,\{a\})>0$ for every $t,a \in I$, or equivalently $Q(t,A)$ is irreducible and aperiodic) and some topological extra generic assumptions are included.
Although the authors only deal with maps $f_t$ that preserve orientation, they indicated in~\cite[Remark~1.4]{kleptsyn2014physical} that their result is applicable if one appropriately extends the skew-product $F$ to a new skew-product for which all fiber maps preserve orientation.
See~\cite{gelfert2020invariant}, where this strategy was implemented. The main novelty in Corollary~\ref{cor:interval}, in addition to simplifying the generic conditions and improving the requirement on the transition probability, is that we also obtain the quasi-compactness of the associated Koopman operator $\hat{P}:C^\alpha(T\times X)\to C^\alpha(T\times X)$ from which such properties (and many other statistical consequences) are obtained.

\subsection{Organization of the paper}  In Section~\ref{sec:FPM-thmA} we revisit the definition {\tt(FPM)} and prove a more complete version of Theorem~\ref{prop:iid} generalizing~\cite[Theorem~C]{BNNT22}.  Section~\ref{sec:iid-representation} establishes the theory that enables us to transfer results from Bernoulli random maps to Markovian random maps. 
In Section~\ref{sec:mean-constrictive idd respresentations} we prove Theorem~\ref{mainthmA}, Corollaries~\ref{maincorA} and~\ref{maincor:D}. On the other hand, Theorem~\ref{cor:Markovian-qc} is proved in Section~\ref{sec:Markov} and in Section~\ref{sec:invariant} we show in detail Corollaries~\ref{cor:mostly-contraction-circle} and~\ref{cor:interval}.
\section{Revisiting {\tt (FPM)}: Proof of Theorem~\ref{prop:iid}} \label{sec:FPM-thmA}
Let \((X, \mathscr{B}, m)\) be a Polish probability space and let \((\Omega, \mathscr{F}, \mathbb{P}) = (T^\mathbb{N}, \mathscr{A}^\mathbb{N}, p^\mathbb{N})\) be the infinite product of a probability space \((T, \mathscr{A}, p)\). We denote by \(\bar{m} = \mathbb{P} \times m\).

Let \(f: T \times X \to X\) be a random map. We say that  $f:T\times X \to X$ is \emph{fibered $(p,m)$-nonsingular} if $f_t$ is $m$-nonsingular for $p$-a.e.~$t\in T$. We also say that $f$ is \emph{$(p,m)$-nonsingular} if the preimage of any $m$-null set by  $f_t$ is $m$-null for $p$-a.e.~$t\in T$. That is, $f_tm(A) = 0$ for $p$-a.e.~$t\in T$ whenever $m(A) = 0$. Here, $t$ depends on $A$.  Clearly, if $f$ is fibered $(p,m)$-nonsingular, then $f$ is $(p,m)$-nonsingular. However, the converse is not true, in general.  Moreover, we have the following relation:

\begin{lem}\label{lem:nonsigular-relations}
Let \(F\) be the skew-product associated with the random map \(f\). Then
\[
 f \text{ is fibered } (p,m)\text{-nonsingular} \implies F \text{ is } \bar{m}\text{-nonsingular} \implies f \text{ is } (p,m)\text{-nonsingular}.
\]
\end{lem}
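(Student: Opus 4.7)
The plan is to prove each implication separately by reducing $\bar{m}$-measures of subsets of $\Omega\times X$ to iterated integrals, using Fubini together with the product decomposition $\mathbb{P} = p \times \mathbb{P}$ corresponding to the identification $\omega = (\omega_0, \sigma\omega)$, which is valid because $\mathbb{P} = p^\mathbb{N}$.

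For the first implication, assume $f$ is fibered $(p,m)$-nonsingular, so there exists a $p$-null set $N\subset T$ with $f_t m \ll m$ for every $t\notin N$. Let $\bar A \subset \Omega\times X$ be $\bar m$-null. Since $\bar m = \mathbb P\times m$ and $\mathbb P$ is $\sigma$-invariant, Fubini's theorem gives that the slice $A_{\omega'} = \{x : (\omega',x)\in \bar A\}$ is $m$-null for $\mathbb P$-a.e.\ $\omega'$. Writing
\[
\bar m\bigl(F^{-1}(\bar A)\bigr) = \int_\Omega \int_X \mathbf{1}_{\bar A}\bigl(\sigma\omega, f_{\omega_0}(x)\bigr)\, dm(x)\, d\mathbb P(\omega) = \int_T \int_\Omega m\bigl(f_t^{-1}(A_{\omega'})\bigr)\, d\mathbb P(\omega')\, dp(t),
\]
I would then observe that for $p$-a.e.\ $t$ (namely $t\notin N$) and $\mathbb P$-a.e.\ $\omega'$, the inner integrand vanishes, because $m(A_{\omega'})=0$ together with $f_t m \ll m$ forces $m(f_t^{-1}(A_{\omega'}))=0$. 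Hence $\bar m(F^{-1}(\bar A))=0$, giving $\bar m$-nonsingularity of $F$.

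For the second implication, assume $F$ is $\bar m$-nonsingular and fix an arbitrary $m$-null set $A\subset X$. Consider the cylinder $\bar A = \Omega \times A$, which satisfies $\bar m(\bar A)=0$. Its preimage under $F$ is $F^{-1}(\bar A) = \{(\omega,x)\in \Omega\times X : f_{\omega_0}(x)\in A\}$, and Fubini yields
\[
0 = \bar m\bigl(F^{-1}(\bar A)\bigr) = \int_T m\bigl(f_t^{-1}(A)\bigr)\, dp(t),
\]
so $m(f_t^{-1}(A))=0$ for $p$-a.e.\ $t$, with the exceptional $p$-null set allowed to depend on $A$. This is precisely the definition of $(p,m)$-nonsingularity of $f$.

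The argument is essentially bookkeeping with Fubini, so there is no real obstacle; the only subtlety worth underlining is the distinction between the two notions of nonsingularity for $f$. Fibered $(p,m)$-nonsingularity is what lets us pull the null set $N$ of bad parameters outside the quantifier over $\bar m$-null sets $\bar A$ in the first implication, whereas in the second implication we only recover the weaker form where the $p$-null exceptional set depends on the chosen $A$, which is why the converse to the first implication cannot be obtained by this argument.
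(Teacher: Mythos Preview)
Your proof is correct and follows essentially the same approach as the paper's: both implications are handled by Fubini together with the identity $\bar m(F^{-1}(\bar A)) = \int f_{\omega_0} m(A_{\sigma\omega})\,d\mathbb P(\omega)$, and your explicit use of the product decomposition $\mathbb P = p \times \mathbb P$ under $\omega \mapsto (\omega_0,\sigma\omega)$ is just a notational variant of the paper's phrasing in terms of the full-measure set $\Omega_1 = \Omega_0 \cap \sigma^{-1}(\Omega_0) \cap (T_0\times\Omega)$. Your closing remark on why the argument only recovers the weaker notion in the second implication is a useful clarification that the paper does not spell out.
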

\begin{proof}
Assume that \(f\) is fibered \((p,m)\)-nonsingular. Hence, there exists a set \(T_0 \subset T\) with \(p(T_0)=1\) such that \(f_t m \ll m\) for every \(t \in T_0\). Consider a set \(A \subset \Omega \times X\) with \(\bar{m}(A)=0\). By Fubini's theorem, there exists a subset \(\Omega_0 \subset \Omega\) with \(\mathbb{P}(\Omega_0)=1\) such that
$m(A_\omega)=0$ for every $\omega \in \Omega_0$,
where
$A_\omega = \{ x \in X : (\omega,x) \in A \}$.
Let \(\Omega_1 = \Omega_0 \cap \sigma^{-1}(\Omega_0) \cap (T_0 \times \Omega)\). Since the Bernoulli measure \(\mathbb{P} = p^\mathbb{N}\) is invariant under the shift map \(\sigma\), we have \(\mathbb{P}(\Omega_1)=1\). Moreover, for every \(\omega \in \Omega_1\) we have \(f_{\omega_0} m(A_{\sigma(\omega)})=0\). Therefore,
\[
F\bar{m}(A) = \int f_{\omega_0} m(A_{\sigma(\omega)})\, d\mathbb{P}(\omega)
= \int_{\Omega_1} f_{\omega_0} m(A_{\sigma(\omega)})\, d\mathbb{P}(\omega)
= 0.
\]
This shows that \(F\) is \(\bar{m}\)-nonsingular.

Now assume that \(F\) is \(\bar{m}\)-nonsingular. Let \(A \subset X\) with \(m(A)=0\). Then, \(\bar{m}(\Omega \times A)=0\) and hence $F\bar{m}(\Omega \times A)=0$.
Moreover,
\[
\int f_t m(A) \, dp(t)= \int f_{\omega_0} m(A) \, d\mathbb{P}(\omega) = F\bar{m}(\Omega \times A)=0.
\]
This shows that \(f\) is \((p,m)\)-nonsingular.
\end{proof}

Associated with \(f\), we introduce the transition probability
\begin{equation*} 
P(x,A) = p(\{t\in T : f_t(x) \in A\})
= \mathbb{P}(\{\omega\in \Omega : f_\omega(x) \in A\}),
\quad \text{for } x\in X \text{ and } A \in \mathscr{B}.
\end{equation*}
We say that \(P(x,A)\) is \emph{\(m\)-nonsingular} if \(P(x,A)=0\) for \(m\)-a.e.~\(x \in X\) whenever \(m(A)=0\).

\begin{lem}\label{lem:f-nonsigular-P}
A random map \(f\) is \((p,m)\)-nonsingular if and only if its associated transition probability \(P(x,A)\) is \(m\)-nonsingular.
\end{lem}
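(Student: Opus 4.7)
The plan is a one-line Fubini computation on the product space $(T\times X,\; p\times m)$. For a fixed set $A\in \mathscr{B}$, I will consider the nonnegative measurable function $(t,x)\mapsto 1_A(f_t(x))$, which is measurable thanks to the standing joint measurability of $f:T\times X\to X$. Integrating in $x$ first recovers $\int_T m(f_t^{-1}(A))\,dp(t)$; integrating in $t$ first recovers $\int_X P(x,A)\,dm(x)$, directly from the definition of $P$.

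Once the resulting identity
\[
\int_T m(f_t^{-1}(A))\,dp(t) \;=\; \int_X P(x,A)\,dm(x)
\]
is in hand, both implications are immediate from the standard fact that a nonnegative measurable function has zero integral exactly when it vanishes almost everywhere. For the forward direction, if $f$ is $(p,m)$-nonsingular and $m(A)=0$, the integrand on the left vanishes for $p$-a.e.\ $t$, so the right-hand integral is zero and therefore $P(x,A)=0$ for $m$-a.e.\ $x$. Conversely, if $P$ is $m$-nonsingular and $m(A)=0$, then $P(x,A)=0$ for $m$-a.e.\ $x$, so the right-hand side vanishes and hence $m(f_t^{-1}(A))=0$ for $p$-a.e.\ $t$, giving the $(p,m)$-nonsingularity of $f$.

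I do not anticipate any real obstacle: the entire content of the lemma is this Fubini identity, and the only potential subtlety --- joint measurability of the integrand --- is built into the definition of a random map. It is also worth noting explicitly that $P(\cdot,A)$ is itself measurable (so that its integral against $m$ makes sense), which likewise follows from joint measurability of $f$ together with Fubini's theorem applied to $1_A\circ f$.
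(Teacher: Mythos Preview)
Your proof is correct and follows exactly the same route as the paper: both establish via Fubini the identity $\int_T m(f_t^{-1}(A))\,dp(t)=\int_X P(x,A)\,dm(x)$ (the paper writes the left side as $\int f_t m(A)\,dp(t)$) and then read off both implications from the fact that a nonnegative function has zero integral iff it vanishes almost everywhere. Your version is slightly more explicit about the measurability hypotheses, but the argument is the same.
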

\begin{proof}
Notice that
\begin{align*}
\int f_t m (A) \, dp(t)
&= \int 1_A(x) \, df_t m(x)\, dp(t)
= \int 1_A \circ f_t(x) \, dm(x) \, dp(t) \\
&= \int 1_A \circ f_t(x) \, dp(t) \, dm(x)
= \int P(x,A) \, dm(x).
\end{align*}
Thus, \(\int f_t m(A)\, dp(t)=0\) if and only if \(\int P(x,A)\, dm(x)=0\), establishing the desired equivalence.
\end{proof}

A $(p,m)$-nonsingular random map $f:T\times X \to X$ induces a linear bounded positive operator $\mathcal{L}_f: L^1(m) \to  L^1(m)$ given by
\begin{equation} \label{eq:Lf}
\mathcal{L}_f \varphi = \frac{d}{dm} \int f_tm_\varphi \, dp(t), \quad \text{where} \quad dm_\varphi = \varphi \, dm, \quad \varphi\in L^1(m)
\end{equation}
and $\frac{d}{dm}$ denotes the Radon--Nikod\'ym
derivative with respect to $m$. Here $L^1(m)=L^1(X,\mathscr{B},m)$ (resp.~$L^\infty(m)=L^\infty(X,\mathscr{B},m)$) denotes the Banach space of measurable functions $\varphi:X\to\mathbb{R}$ with \(\|\varphi\| =\int_X|\varphi|\,dm<\infty\) (resp.~$\|\varphi\|_\infty= \mathrm{ess}\sup |\varphi| < \infty$), where functions that agree \(m\)-almost everywhere are identified.  We call $\mathcal{L}_f$ the \emph{annealed Perron--Frobenius operator}. 
Moreover, if $f$ is fibered $(p,m)$-nonsingular, then 
$$
\mathcal{L}_f \varphi = \int \mathcal{L}_t \varphi \, dp(t) \quad \text{for }\varphi\in L^1(m) ,
$$
where $\mathcal{L}_t\varphi =\frac{d}{dm} f_tm_\varphi$ denotes the Perron--Frobenius associated with $f_t$.  
See below and \cite[Appendix~A]{BNNT22} for more details.

\begin{rem}\label{rem:def-Perron}
Following \cite{ito1964invariant}, an \(m\)-nonsingular transition probability induces a bounded linear positive operator \(\mathcal{L}_f : L^1(m) \to L^1(m)\) defined via the Radon--Nikod\'ym derivative by
\begin{equation} \label{eq:Lf2}
\mathcal{L}_f\varphi = \frac{d}{dm} \left( \int P(x,\cdot)\, dm_\varphi \right), \quad \text{where } dm_\varphi = \varphi\, dm\text{ and } \varphi \in L^1(m).
\end{equation}
Since \(P(x,\cdot) = f^xp\), where \(f^x = f(\cdot,x)\), we have
\[
\int P(x,A)\, dm_\varphi = \int 1_{(f^x)^{-1}(A)} \, dp \, dm_\varphi = \int 1_A \, d\bigl(f_t m_\varphi\bigr) \, dp = \int f_t m_\varphi(A) \, dp.
\]
Thus, the expression in \eqref{eq:Lf2} coincides with that in \eqref{eq:Lf}. Moreover, \(\mathcal{L}_f\) is a \emph{Markov operator}, i.e., a positive linear operator satisfying \(\mathcal{L}^*_f1_X = 1_X\), where the dual operator \(\mathcal{L}^*_f: L^\infty(m) \to L^\infty(m)\) is given by
\[
\mathcal{L}^*_f\varphi = \int \varphi(y)\, P(x,dy), \quad \varphi \in L^\infty(m).
\]
We also have that the iterations $\mathcal{L}^{*n}_f$  are related with the $n$-th transition probability 
$P^n(x,A)$ in the following way
\begin{equation} \label{eq:L*}
    \mathcal{L}^{*n}_f\varphi = \int \varphi(y) \, P^n(x,dy)=\int \varphi (f^n_\omega(x))\, d\mathbb{P}(\omega) \quad \varphi \in L^\infty(m).
\end{equation}
\end{rem}

Under the assumption that $f$ is fibered $(p,m)$-nonsingular, the property {\tt (FPM)} is characterized by the mean constrictivity of the annealed Perron--Frobenius operator $\mathcal L_{f}$~\cite[Theorem~C]{BNNT22}.  We recall that the operator $ {\mathcal{L}}_f$ is said to be \emph{mean constrictive} (or {\tt (MC)} for short), if there is a compact set $F$ of $L^1(m)$ such that $d(A_n h,F) \to 0$ as $n\to \infty$ for any $h \in D(m)$,  where  
 \[
 d(\varphi,F)=\inf_{\psi\in F}\|\varphi -\psi \| \qquad \text{and} \qquad 
A_n\varphi = \frac{1}{n}\sum_{i=0}^{n-1}\mathcal{L}_f^i\varphi \qquad \text{for $\varphi \in L^1(m)$}
\]
and  $D(m)=D(X,\mathscr{B},m)$ is the space of density functions. That is, $h\in D(m)$ if and only if $h \geq 0$ $m$-almost everywhere and $\Vert h \Vert =1$.

We define the support of a real-valued function \(h\) (up to an \(m\)-null set) by
\[
\supp h = \{ x \in X : h(x) \neq 0 \}.
\]
We say that \(h\) is an \emph{invariant density} of \(\mathcal{L}_f\) (or a \(\mathcal{L}_f\)-invariant density) if \(h \in D(m)\) and \(\mathcal{L}_f h = h\). We also say that \(h\) is \emph{ergodic} if the probability measure \(m_h\) defined by \(dm_h = h\,dm\) is ergodic, that is, if \(m_h(A) \in \{0,1\}\) for any \(A \in \mathscr{B}\) satisfying \(\mathcal{L}_f^*1_A \geq 1_A\).

Following \cite{BNNT22},  \(\mathcal{L}_f\) satisfies the property of \emph{finitude of ergodic densities} (or {\tt (FED)} for short) if \(\mathcal{L}_f\) admits finitely many ergodic \(\mathcal{L}_f\)-invariant densities \(h_1,\dots,h_r\) with mutually disjoint supports (up to an \(m\)-null set) and if the invariant density
\[
h = \frac{1}{r}(h_1 + \dots + h_r)
\]
has the maximal support; that is,
\begin{align*}
\lim_{n \to \infty} \mathcal{L}_f^{n*}1_{\supp h}(x) = 1 \quad \text{for \(m\)-a.e.~\(x \in X\)}.
\end{align*}
Here, \(\mathcal{L}^{n*}_f\) denotes the dual operator of $\mathcal{L}^n_f=\mathcal{L}_f \circ \overset{(n)}{\dots} \circ \mathcal{L}_f$.

On the other hand, \(\mathcal{L}_f\) is said to be \emph{almost periodic in mean} (or {\tt (APM)} for short) if there exist finitely many ergodic \(\mathcal{L}_f\)-invariant densities \(h_1,\dots,h_r\) with mutually disjoint supports (up to an \(m\)-null set) and positive bounded linear functionals \(\lambda_1, \dots, \lambda_r\) on \(L^1(m)\) such that
\[
\lim_{n\to\infty} \left\| A_n\varphi-\sum_{i=1}^r\lambda_i(\varphi)h_i \right\| = 0 \quad \text{for all } \varphi\in L^1(m).
\]

 If \(h\) is an ergodic \(\mathcal{L}_f\)-invariant density, then \(m_h\) is an ergodic \(f\)-stationary measure. Consequently, the measure \(\bar{m}_h = \mathbb{P} \times m_h\) is \(F\)-invariant and absolutely continuous with respect to \(\bar{m} = \mathbb{P} \times m\). 
 In view of this observation and as a consequence of the following lemma, we have:

\begin{rem}
The requirement that the supports of the densities \(h_i\) (for \(i=1,\dots,r\)) be disjoint (up to an \(m\)-null set) is redundant in the definitions of {\tt (FED)} and {\tt (APM)}.
\end{rem}

Although the following result can be derived from~\cite[Proposition~C.10 and Remark~C.11]{BNNT22} or~\cite[Theorem~1.4.8]{aaronson1997introduction}, we provide a different direct proof here.

\begin{lem}\label{lem:abs}
Let \((Y, \mathscr{G}, \eta)\) be a probability space and consider a measurable map \(g: Y \to Y\). Suppose that \(\mu_1\) and \(\mu_2\) are two distinct ergodic \(g\)-invariant probability measures that are both absolutely continuous with respect to \(\eta\). Then supports of \(\mu_1\) and \(\mu_2\)  are disjoint modulo an \(\eta\)-null set.
\end{lem}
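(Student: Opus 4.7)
The plan is to produce two disjoint $g$-invariant sets that separate $\mu_1$ and $\mu_2$ almost surely, and then to transfer this separation to the $\eta$-supports by unpacking the abstract definition of support from the footnote after Definition~\ref{dfn:11}.

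First, since $\mu_1 \neq \mu_2$, I can pick a bounded measurable function $\varphi: Y \to \R$ (for instance, the indicator of some $A \in \mathscr{G}$ on which the two measures disagree) such that
\[
a_1 := \int \varphi\, d\mu_1 \neq \int \varphi\, d\mu_2 =: a_2.
\]
Applying Birkhoff's ergodic theorem to each ergodic $g$-invariant measure $\mu_i$, the sets
\[
B_i := \Bigl\{ y \in Y : \lim_{n \to \infty} \frac{1}{n} \sum_{j=0}^{n-1} \varphi(g^j y) = a_i \Bigr\}, \quad i=1,2,
\]
are $g$-invariant, mutually disjoint (since $a_1 \ne a_2$), and satisfy $\mu_i(B_i)=1$, so in particular $\mu_i(Y \setminus B_i) = 0$.

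Next, let $S_i$ be a support of $\mu_i$ relative to $\eta$ in the sense of the footnote. Since $S_i \setminus B_i \subset S_i$ and $\mu_i(S_i \setminus B_i) \le \mu_i(Y \setminus B_i)=0$, the defining property of the support yields $\eta(S_i \setminus B_i)=0$. In other words, $S_i \subset B_i$ modulo $\eta$-null sets. Combining the two inclusions with $B_1 \cap B_2 = \emptyset$, I get
\[
\eta(S_1 \cap S_2) \le \eta(S_1 \setminus B_1) + \eta(S_2 \setminus B_2) = 0,
\]
as desired.

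The argument is quite short and I do not expect any real obstacle. The only delicate point is that $\eta$ is not assumed to be $g$-invariant, so I cannot directly apply Birkhoff on $(Y,\mathscr{G},\eta)$; this is precisely why I rely on the converse implication built into the abstract definition of support, which transfers a $\mu_i$-null conclusion to an $\eta$-null conclusion inside $S_i$. An equivalent route would be to use the standard fact that two distinct ergodic invariant probability measures are mutually singular to obtain a $g$-invariant separating set directly, but the Birkhoff approach has the merit of producing the separating sets explicitly without going through Radon--Nikod\'ym.
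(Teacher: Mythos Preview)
Your proof is correct and follows essentially the same route as the paper: both use Birkhoff's theorem to produce disjoint generic sets $B_1,B_2$ with $\mu_i(B_i)=1$, and then argue that the $\eta$-support $S_i$ is contained in $B_i$ modulo an $\eta$-null set. Your packaging is a bit cleaner---you apply the footnote definition of support directly to conclude $\eta(S_i\setminus B_i)=0$, whereas the paper reaches the same conclusion via densities and a contradiction involving the conditional measure $\eta_B$---but the underlying argument is the same.
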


\begin{proof} 
Since \(\mu_i\ll \eta\), there is \(h_i\in D(\eta)\) such that $d\mu_i=h_i\,d\eta$. Assume that $B=\supp h_1 \cap \supp h_2$
has positive \(\eta\)-measure. Consider the probability measure \(\eta_B\) 
defined by
\[
\eta_B(A)=\frac{\eta(A\cap B)}{\eta(B)} \quad \text{for } A\in \mathscr{G}.
\]
Since \(\mu_1\neq \mu_2\), there exists a set \(A\in \mathscr{G}\) such that \(\mu_1(A)\neq \mu_2(A)\). For each \(i=1,2\), let
$G_i = B(\mu_i,\varphi)$
be the set of \(\mu_i\)-generic points of \(g\) for \(\varphi=1_A\); that is, \(G_i\) is the set of points \(x\in Y\) for which the Birkhoff average
\[
\frac{1}{n}\sum_{j=0}^{n-1} \varphi\bigl(g^j(x)\bigr)
\]
converges to \(\int \varphi \,d\mu_i\) as \(n\to \infty\). Since 
$\int \varphi \,d\mu_1 \neq \int \varphi\, d\mu_2$, 
we have that \(G_1\cap G_2=\emptyset\). Moreover, by the Birkhoff ergodic theorem and the ergodicity of \(\mu_i\), we have \(\mu_i(G_i)=1\). Hence, defining 
$\tilde{G}_i := G_i\cap \supp h_i$,
we obtain \(\mu_i(\tilde{G}_i)=1\). Observing that 
$
\int_{\tilde{G}_i} h_i\, d\eta = \mu_i(\tilde{G}_i)=1,
$
we have 
\begin{equation}\label{eq:tildeG}
\eta(\tilde{G}_i)=\eta(\supp h_i) \quad \text{for $i=1,2$}.   
\end{equation}
This implies that $\eta(\tilde G_i \cap B)=\eta(B)$  for \(i=1,2\). Indeed, otherwise $\eta(\tilde G_i \cap B)<\eta(B)$ and since \(\tilde{G}_i\subset \supp h_i\),  it follows that 
\begin{align*}
\eta(\tilde G_i) &= \eta( \tilde G_i \cap \supp h_i) = \eta(\tilde G_i \cap B) + \eta(\tilde G_i \cap (\supp h_i \setminus B))  \\ &<   
\eta(B) +\eta(\supp h_i \setminus B)=\eta(\supp h_i)
\end{align*}
contradicting~\eqref{eq:tildeG}.  Thus, $\eta_B(\tilde G_i)=1$ for $i=1,2$ and consequently, 
$\eta_B(\tilde{G}_1\cap \tilde{G}_2)=1$,
which implies \(G_1\cap G_2\neq \emptyset\), a contradiction.
\end{proof}

We now present a more complete version of Theorem~\ref{prop:iid}. Moreover, in view of Lemma~\ref{lem:nonsigular-relations}, the following result generalizes also~\cite[Theorem~C]{BNNT22}.

\begin{thm}\label{thm:BNNT}
Let \(f:T\times X \to X\) be a random map driven by a Bernoulli measure \(\mathbb{P}\). Assume that its associated skew-product \(F\) is \(\bar{m}\)-nonsingular, where \(\bar{m}=\mathbb{P}\times m\). The following are equivalent:
\begin{multicols}{2}
\begin{enumerate}[label=(\roman*), leftmargin=1cm]
\item \({\mathcal{L}}_f\) is {\tt (MC)}; 
\item \(\mathcal{L}_f\) satisfies {\tt (FED)}; 
\item \(\mathcal{L}_f\) is {\tt (APM)};
\item \(f\) satisfies {\tt (FPM)} with respect to \(m\);
\item \({\mathcal{L}}_F\) is {\tt (MC)}; 
\item \(\mathcal{L}_F\) satisfies {\tt (FED)}; 
\item \(\mathcal{L}_F\) is {\tt (APM)};
\item \(F\) satisfies {\tt (FPM)} with respect to \(\bar{m}\).
\end{enumerate}
\end{multicols}
\end{thm}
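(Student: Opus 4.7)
The plan is to split the eight-way equivalence into the two blocks---(i)--(iv) concerning the random map $f$ and (v)--(viii) concerning the skew-product $F$---each of which is an application of~\cite[Theorem~C]{BNNT22}, plus a single bridge between the two blocks via the Bernoulli product structure. By Lemma~\ref{lem:nonsigular-relations}, the hypothesis that $F$ is $\bar m$-nonsingular implies $f$ is $(p,m)$-nonsingular, so both Markov operators $\mathcal L_f$ on $L^1(m)$ and $\mathcal L_F$ on $L^1(\bar m)$ are well-defined. For the $f$-block I would invoke~\cite[Theorem~C]{BNNT22}: although that theorem is stated under fibered $(p,m)$-nonsingularity, its proof rests on the Markov-operator structure of $\mathcal L_f$ and on absolute-continuity/disintegration arguments, which remain valid under the weaker $(p,m)$-nonsingularity available here, yielding (i)$\Leftrightarrow$(ii)$\Leftrightarrow$(iii)$\Leftrightarrow$(iv). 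For the $F$-block, I would apply the same result to $F$ regarded as a trivial Bernoulli random map (singleton alphabet, Dirac noise), obtaining (v)$\Leftrightarrow$(vi)$\Leftrightarrow$(vii)$\Leftrightarrow$(viii$'$), where (viii$'$) is {\tt(FPM)} for $F$ without the $\mathbb P$-Markovian restriction. The identification (viii)$\Leftrightarrow$(viii$'$) then follows because the $\sigma$-ergodicity of $\mathbb P$ forces any absolutely continuous ergodic $F$-invariant probability measure to have first marginal $\mathbb P$, and Lemma~\ref{lem:Markov-Bernoulli} (via disintegration) forces it to be of product form $\mathbb P\times\mu$ and hence $\mathbb P$-Markovian.

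It remains to install the bridge (iv)$\Leftrightarrow$(viii). The Bernoulli structure furnishes a bijection between ergodic absolutely continuous $f$-stationary measures $\mu_i$ and ergodic absolutely continuous $\mathbb P$-Markovian $F$-invariant measures $\bar\mu_i$ via $\bar\mu_i = \mathbb P\times\mu_i$, under which the $\mathbb P$-Markovian and absolute-continuity conditions of the two definitions match on the nose, and Lemma~\ref{lem:abs} supplies disjointness of supports on both sides. The direction (viii)$\Rightarrow$(iv) of the basin condition is immediate by taking observables of the form $\varphi(\omega,x)=\psi(x)$ and applying Fubini, using the identity $B(\bar\mu_i,\psi)=\{(\omega,x):x\in B_\omega(\mu_i,\psi)\}$. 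The reverse direction (iv)$\Rightarrow$(viii) I would route through the {\tt(APM)} characterizations: from (iii), I would deduce (vii) via the Bernoulli-specific observation that, for $\Phi(\omega,x)=\tilde\Phi(\omega_0,\ldots,\omega_{N-1},x)$ depending only on the first $N$ base coordinates,
\[
\mathcal L_F^N\Phi(\omega,y)=\int \mathcal L_{t_{N-1}}\cdots\mathcal L_{t_0}\bigl(\tilde\Phi(t_0,\ldots,t_{N-1},\cdot)\bigr)(y)\,dp(t_0)\cdots dp(t_{N-1}),
\]
which is a function of $y$ alone; hence further iterations act as $\mathcal L_f$ on a density on $X$, and {\tt(APM)} of $\mathcal L_f$ transfers to {\tt(APM)} of $\mathcal L_F$ on the dense subclass of finite-dependence functions and then, by continuity, on all of $L^1(\bar m)$.

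The main obstacle is the step (iv)$\Rightarrow$(viii) for the basin condition on observables $\varphi:\Omega\times X\to\mathbb R$ depending genuinely on both arguments. The $f$-side only controls Birkhoff averages of observables in $x$ along the quenched trajectories $f^j_\omega$, whereas (viii) demands convergence of $\frac 1n\sum_j\varphi(\sigma^j\omega, f^j_\omega(x))$, a joint statement that couples the shift on $\Omega$ with the fibered dynamics. Separately invoking the ergodic theorem for $\sigma$ and the quenched Birkhoff theorem for $f$ does not suffice: already for $\varphi(\omega,x)=\chi(\omega_0)\psi(x)$, the coupled sum $\frac 1n\sum_j \chi(\omega_j)\psi(f^j_\omega(x))$ calls for a true joint ergodic theorem. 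The Perron--Frobenius detour above sidesteps this difficulty by working at the level of $L^1$ Ces\`aro convergence, where the Bernoulli independence of successive base coordinates makes the transfer $\mathcal L_f\leftrightarrow\mathcal L_F$ transparent.
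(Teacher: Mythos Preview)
Your overall structure---two blocks from~\cite[Theorems~C/D]{BNNT22} plus a bridge---matches the paper's, and your choice to bridge via {\tt(APM)} (i.e., (iii)$\Rightarrow$(vii)) rather than the paper's {\tt(FED)} route (Proposition~\ref{APM-FPM-skew}, (ii)$\Rightarrow$(vi)) is a legitimate alternative: the computation showing that $\mathcal L_F^N$ collapses finite-dependence densities to functions of $x$ alone, after which $\mathcal L_F$ acts as $\mathcal L_f$, is correct and transfers Ces\`aro convergence cleanly.

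There is, however, a genuine gap in your justification of (viii$'$)$\Leftrightarrow$(viii). You argue that an absolutely continuous ergodic $F$-invariant measure has first marginal $\mathbb P$ (correct, by ergodicity of $\sigma$), and then that ``Lemma~\ref{lem:Markov-Bernoulli} (via disintegration) forces it to be of product form $\mathbb P\times\mu$.'' But Lemma~\ref{lem:Markov-Bernoulli} takes $\mathbb P$-Markovian---i.e., the disintegration $\bar\mu_\omega$ depending only on $\omega_0$---as a \emph{hypothesis}, and concludes that the disintegration is in fact constant. It cannot be used to deduce that an arbitrary $F$-invariant $\bar\mu\ll\bar m$ with first marginal $\mathbb P$ is $\mathbb P$-Markovian in the first place; a general disintegration $\bar\mu_\omega$ may a priori depend on the full sequence $\omega$. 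So as written, (viii$'$)$\Rightarrow$(viii) is not established, and without it your loop does not close: you have $\{{\rm(i)},{\rm(ii)},{\rm(iii)}\}$ equivalent, (iv)$\Rightarrow$(ii), $\{{\rm(v)},{\rm(vi)},{\rm(vii)},{\rm(viii}')\}$ equivalent, (iii)$\Rightarrow$(vii), (viii)$\Rightarrow$(iv), and (viii)$\Rightarrow$(viii$'$), but no arrow from the $F$-block back to $\{{\rm(i)},\dots,{\rm(iv)},{\rm(viii)}\}$.

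The fix is already latent in your own argument. Your {\tt(APM)} transfer produces the $\mathcal L_F$-invariant densities \emph{explicitly} as $H_i=1_\Omega\otimes h_i$; hence the ergodic absolutely continuous $F$-invariant measures are $\bar\mu_i=\mathbb P\times m_{h_i}$, which are $\mathbb P$-Markovian by construction. Thus (iii)$\Rightarrow$(vii)$\Rightarrow$(viii$'$) with these particular measures yields (viii) directly, and the cycle (iii)$\Rightarrow$(viii)$\Rightarrow$(iv)$\Rightarrow$(ii)$\Rightarrow$(iii) closes. This is exactly how the paper proceeds (with {\tt(FED)} in place of {\tt(APM)}): Proposition~\ref{APM-FPM-skew} exhibits the product densities $1_\Omega\otimes h_i$ and verifies the maximal-support condition via the dual operator identity $\int\mathcal L_F^{*n}1_{\Omega\times A}\,d\mathbb P=\mathcal L_f^{*n}1_A$. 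The identification (viii$'$)$\Leftrightarrow$(viii) then emerges as a consequence of the full equivalence, not as a standalone step.
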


The following proposition shows (ii)~\( \implies \)~(vi) in the above theorem.

\begin{prop}\label{APM-FPM-skew}
If \(F\) is \(\bar{m}\)-nonsingular and \(\mathcal{L}_f\) is {\tt (FED)}, then \(\mathcal{L}_F\) is {\tt (FED)}.
\end{prop}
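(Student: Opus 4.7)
The plan is to lift the ergodic $\mathcal{L}_f$-invariant densities $h_1,\dots,h_r$ from {\tt (FED)} for $\mathcal{L}_f$ to the $F$-level by the fiberwise constant formula $\bar h_i(\omega,x)\eqdef h_i(x)$, corresponding to the product measures $\bar\mu_i = \mathbb{P}\times m_{h_i}$, and to verify that these constitute an {\tt (FED)}-family for $\mathcal{L}_F$. Invariance, ergodicity, and disjointness of supports are quick. Each $m_{h_i}$ is an ergodic $f$-stationary measure, so in the Bernoulli setting $\bar\mu_i = \mathbb{P}\times m_{h_i}$ is $F$-invariant, and its ergodicity under $F$ lifts from that of $m_{h_i}$ under $f$ by the classical correspondence between ergodic stationary measures and ergodic product invariant measures of skew-products over a Bernoulli base (cf.~\cite{kifer-ergodic} and Lemma~\ref{lem:Markov-Bernoulli}). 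Since $\bar\mu_i \ll \bar m$ with density $\bar h_i(\omega,x) = h_i(x)$, each $\bar h_i$ is an ergodic $\mathcal{L}_F$-invariant density, and clearly $\supp\bar h_i = \Omega \times \supp h_i$ are pairwise disjoint modulo $\bar m$.

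The substantive step is maximal support. Set $S \eqdef \supp h$ and $\bar S \eqdef \Omega \times S = \supp\bar h$. Because $F$ is deterministic, $\mathcal{L}_F^{n*} 1_{\bar S}(\omega,x) = 1_{\bar S}(F^n(\omega,x)) = 1_S(f_\omega^n(x))$ takes values in $\{0,1\}$, so the desired pointwise convergence to $1$ is equivalent to the orbit $\{f_\omega^n(x)\}_n$ lying in $S$ from some time on, for $\bar m$-a.e.\ $(\omega,x)$. I would split this into (a) an \emph{absorbing} property: $\bar S \subset F^{-1}(\bar S)$ modulo $\bar m$-null; and (b) a \emph{hitting} property: $\bar m\bigl(\bigcup_n F^{-n}(\bar S)\bigr) = 1$. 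For~(a), the $F$-invariance of $\bar\mu = \mathbb{P}\times m_h$ (with $\bar\mu \ll \bar m$ and $\supp\bar\mu = \bar S$) gives $\bar\mu(F^{-1}(\bar S^c)) = \bar\mu(\bar S^c) = 0$, which forces $\bar S \subset F^{-1}(\bar S)$ modulo $\bar m$-null; the $\bar m$-nonsingularity of $F$ then lets this relation be iterated (pushing the countably many null sets through) to $\bar S \subset F^{-n}(\bar S)$ modulo $\bar m$-null for every $n$. For~(b), the maximal support hypothesis on $h$ reads $\mathcal{L}_f^{n*}1_S(x) = \mathbb{P}(\{\omega : f_\omega^n(x) \in S\}) \to 1$ for $m$-a.e.\ $x$, so by Fubini and dominated convergence $\bar m(F^{-n}(\bar S)) = \int \mathcal{L}_f^{n*}1_S \, dm \to 1$, and hence $\bar m(\bigcup_n F^{-n}(\bar S)) = 1$. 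Combining (a) and (b), for $\bar m$-a.e.\ $(\omega,x)$ there exists $n_0(\omega,x)$ with $F^n(\omega,x) \in \bar S$ for all $n \geq n_0$, which is exactly the maximal support condition for $\bar h$.

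The main obstacle is the upgrade performed in step~(b): the $f$-level maximal support condition provides only convergence in $\mathbb{P}$-probability of the $\{0,1\}$-valued indicators $1_S \circ f_\omega^n$ to $1$, whereas the $F$-level condition demands pointwise convergence of those same indicators. Without absorbing behaviour the events $\{F^n(\omega,x) \in \bar S\}$ could in principle be non-monotone, and probabilities tending to $1$ would not force the indicators to stabilize at $1$. The $\bar m$-nonsingularity hypothesis on $F$ is exactly what is needed to promote the one-step absorbing relation $\bar S \subset F^{-1}(\bar S)$ (mod null) to an all-time one, which in turn upgrades the convergence in probability to pointwise convergence and closes the gap.
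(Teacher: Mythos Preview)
Your proof is correct and follows essentially the same approach as the paper: both lift the densities to $\bar h_i = 1_\Omega \otimes h_i$, dispatch ergodicity/invariance/disjointness quickly, and focus on maximal support. The only difference is phrasing of the maximal-support step: the paper observes that $\mathcal{L}_F^{*n}1_{\Omega\times(X\setminus\supp h)}$ is monotone decreasing (the general Markov-operator fact $\mathcal{L}_F^*1_{\supp \bar h}\ge 1_{\supp \bar h}$) and then applies dominated convergence to the identity $\int \mathcal{L}_F^{*n}1_{\Omega\times A}\,d\mathbb{P}=\mathcal{L}_f^{*n}1_A$, whereas you unpack the same monotonicity dynamically as the absorbing relation $\bar S\subset F^{-1}(\bar S)$ mod null and combine it with hitting---for the deterministic map $F$ these are two formulations of the same fact.
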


\begin{proof} 
Let $h_1,\dots,h_r$ be ergodic invariant densities of $\mathcal{L}_f$ as in {\tt (FED)}. Hence, the measure $\mu_i$ given by $d\mu_i= h_i \, dm$, is an ergodic $f$-stationary measure. Equivalently, $\bar\mu_i=\mathbb{P}\times \mu_i$ is an ergodic $F$-invariant measure. This implies that the density  $1_{\Omega}\otimes h_i$ is an ergodic $\mathcal{L}_F$-invariant density for $i=1,\dots,r$.
We can further see $1_{\Omega}\otimes h$ has the maximal support, where $h=\frac{1}{r}(h_1+\dots+h_r)$.
Indeed,
by~\eqref{eq:L*}, for any $A\in\mathscr{B}$ we have
\[
\int \mathcal{L}_F^{*n}1_{\Omega\times A}\, d\mathbb{P} =
\int 1_A\circ  f^n_{\omega}\, d\mathbb{P}(\omega) =
\mathcal{L}_f^{*n}1_A
\]
$m$-almost everywhere.
Since $h$ has the maximal support, we have
\begin{align*}
\int\lim_{n\to\infty}\mathcal{L}_F^{*n}1_{\Omega\times (X\setminus\supp h)}\,d\mathbb{P}
&=\lim_{n\to\infty}\int\mathcal{L}_F^{*n}1_{\Omega\times (X\setminus\supp h)}\,d\mathbb{P}
=\lim_{n\to\infty}\mathcal{L}_f^{*n}1_{X\setminus\supp h}
=0
\end{align*}
$m$-almost everywhere.
Here we used the fact that $\mathcal{L}_F^{*n}1_{\Omega\times (X\setminus\supp h)}$ and $\mathcal{L}_f^{*n}1_{X\setminus \supp h}$ are decreasing and the Lebesgue dominated convergence theorem.
Thus $\lim_{n\to\infty}\mathcal{L}_F^{*n}1_{\Omega\times (X\setminus\supp h)}=0$ $\bar m$-almost everywhere and $1_{\Omega}\otimes h$ has the maximal support.
Therefore, we conclude that ergodic $\mathcal{L}_F$-invariant densities $1_{\Omega}\otimes h_1,\dots,1_{\Omega}\otimes h_r$ satisfy the condition ({\tt FED}) for $\mathcal{L}_F$.
\end{proof}

\begin{proof}[Proof of Theorem~\ref{thm:BNNT} (Theorem~\ref{prop:iid})]
The equivalence between (i)--(iii) follows from \cite[Theorem~D]{BNNT22}. In fact, this equivalence only requires that \(f\) be \((p,m)\)-nonsingular in order for the annealed Perron--Frobenius operator \(\mathcal{L}_f\) to be well defined. The implication (iv) \(\implies\) (ii) also follows under this weaker assumption (see \cite[Proposition~5.7]{BNNT22}). 

For the deterministic system $F$ we also get from~\cite[Theorem~C (and Theorem~D)]{BNNT22} that (v)--(viii) are equivalent under the assumption that $F$ is $\bar{m}$-nonsigular. We also have under this assumption that (ii)\(\implies\) (vi) from Proposition~\ref{APM-FPM-skew}.

Finally, to conclude the theorem, we show that (viii) \(\implies\) (iv). To show this,  since  $\mathbb{P}$ is a Bernoulli measure,  we have from Lemma~\ref{lem:Markov-Bernoulli} that $\mathbb{P}$-Markovian measures are product measures with first marginal $\mathbb{P}$. Hence, in this case,  the $F$-invariant measure $\bar\mu_i$ in condition 1) of Definition~\ref{dfn:22} are of the form $\bar\mu_i=\mathbb{P}\times \mu_i$. Moreover, since they are $F$-invariant probability measures absolutely continuous with respect to $\bar{m}=\mathbb{P}\times m$, it follows that $\mu_i$ is $f$-stationary absolutely continuous with respect to $m$. This concludes that 1) and 2) in {\tt(FPM)} for $F$ implies 1) in {\tt (FPM)} for $f$. Furthermore, as mentioned in Remark~\ref{rem:disjointness}, condition~2) in {\tt(FPM)} for $f$ is automatically satisfied by the absolute continuity of the stationary measures.  Also, since $B_\omega(\mu_i,\psi)$ coincides with the $\omega$-section of $B(\bar{\mu}_i,\varphi)$, where $\varphi(\omega,x)=\psi(x)$, we get by Fubini's theorem that condition~3)  in Definition~\ref{dfn:11} implies 3) in {\tt(FPM)} for $f$.  This completes the proof of the theorem.
\end{proof}

\section{iid~representation of Markovian random maps} \label{sec:iid-representation}
In this section, we aim to establish a comprehensive theory that enables the transfer of results from Bernoulli random maps to Markovian random maps. We start by fixing and recalling some notation.

Let \( (T, \mathscr{A})\) and \((X, \mathscr{B})\) be standard Borel spaces and denote $\hat X=T\times X$ and $\Omega =T^\mathbb{N}$.  
Let~$Q(t,A)$ be a transition probability on $T\times \mathscr{A}$ with a 
$Q$-stationary probability measure~$p$ on~$T$ and denote by~$\mathbb{P}$ its associated Markov probability measure on $\Omega$. We also denote by~$q(t,A)$ the transition probability defined as the \emph{dual} of~$Q(t,A)$. That is, the essentially unique transition probability such that 
$$
\int_B Q(t,A) \, dp(t) = \int_A q(t,B) \, dp(t) \quad \text{for any $A,B\in \mathscr{A}$.}
$$
See~\cite[Lemma~4.7 and Theorem~4.9]{Revuz84}  for more details.

According to the random mapping representation theorem (cf.~\cite[Proposition~1.5]{levin2017markov} and \cite[Theorem~1.1]{kifer-ergodic}), there exists a random map \( g: S \times T \to T \), where \( S = [0,1] \) is equipped with the normalized Lebesgue measure \( \rho \), such that
\begin{equation}\label{eq:representation}
  Q(t,A) = \rho(\{s \in S : g_s(t) \in A\}).
\end{equation}
We denote by \( \nu = \rho^{\mathbb{N}} \) the Bernoulli measure on the sample space \( \Sigma = S^{\mathbb{N}} \).

Let $f:T\times X \to X$ be a random map. We consider the measurable map 
\begin{equation} \label{eq:idd-representation}
 h: S\times \hat X \to \hat  X, \quad h_s(t,x)=(g_s(t),f_{g_s(t)}(x)).   
\end{equation}
Using the representation~\eqref{eq:representation}, the transition probability $\hat{P}((t,x),B)$ given in~\eqref{eq:Markov-transition} satisfies
\begin{align} \label{eq:P-h}
\hat{P}((t,x),B)=\int 1_B(u,f_{u}(x))\, Q(t,du)=\int 1_B(g_s(t),f_{g_s(t)}(x))\, d\rho(s) =\rho(\{s\in S:  h_s(t,x)\in B\}).
\end{align} 
Thus, the Markov chain generated by the random iterations  $\{h^n_\xi(t,x)\}_{n\geq 0}$, $\xi\in \Sigma$  has transition probability $\hat{P}((t,x),B)$.   
The random map $h$ is referred to as the \emph{iid representation} for the Markovian iterations of $f$. 

We denote by 
$$
H:\Sigma\times \hat X \to  \Sigma\times \hat X, \quad H(\xi,(t,x))=(\sigma(\xi), h_{\xi_0}(t,x)) 
$$
the skew-product associated with $h$.  The following result from~\cite[Lemma~3.1]{matias:2022} related $H$ with the skew-product $F$  associated with the random map $f$. 

\begin{lem} \label{lem:matias} There is a measurable map $\pi:\Sigma\times \hat X \to \Omega\times X$ given by
$$
  \pi(\xi,(t,x))=(\omega,x), \quad \text{where} \ \  \xi \in \Sigma, \ (t,x)\in \hat X \ \ \text{and} \ \ \omega=(g^k_\xi(t))_{k\geq 1} \in \Omega
$$
such that $\pi\circ H =  F \circ \pi$. Moreover, for every probability measure $\hat{\mu}$ on $T\times X$ with first marginal $p$, i.e., $d\hat{\mu}=\mu_t\, dp(t)$, it holds that $\bar{\mu}:= \pi(\nu\times \hat{\mu})$ is a probability measure on $\Omega\times X$ with first marginal $\mathbb{P}$ and  disintegration of the form 
$$d\bar{\mu}=\bar{\mu}_{\omega_0} \, d\mathbb{P}(\omega), \ \ \text{where} \ \ \bar{\mu}_{\omega_0}=\int \hat\mu_t \, q(\omega_0,dt).$$ 
In particular, $\pi(\nu\times(p\times m) )=\mathbb{P}\times m$ for any probability measure $m$ on $X$.  
\end{lem}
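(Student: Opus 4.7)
The plan is to prove the three assertions in order. First, measurability of $\pi$ follows because $(\xi,t)\mapsto g^k_\xi(t)$ is measurable for each $k\geq 1$, so $\omega=(g^k_\xi(t))_{k\geq 1}$ depends measurably on $(\xi,t)$ while $x$ is left untouched. To establish the intertwining $\pi\circ H=F\circ\pi$, I would simply compute both sides from the definitions: $H(\xi,(t,x))=(\sigma\xi,(g_{\xi_0}(t),f_{g_{\xi_0}(t)}(x)))$, and applying $\pi$ produces the $\Omega$-coordinate $(g^k_{\sigma\xi}(g_{\xi_0}(t)))_{k\geq 1}$. The cocycle identity $g^k_{\sigma\xi}(g_{\xi_0}(t))=g^{k+1}_\xi(t)$ identifies this sequence with $\sigma\omega$, and since $\omega_0=g_{\xi_0}(t)$ the second coordinate $f_{g_{\xi_0}(t)}(x)=f_{\omega_0}(x)$ matches $F(\pi(\xi,(t,x)))=(\sigma\omega,f_{\omega_0}(x))$.

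Next, for the first marginal, I would use the representation $Q(t,A)=\rho(\{s:g_s(t)\in A\})$ to interpret the construction probabilistically: under $\nu\times p$ the $\xi_k$ are i.i.d.~with law $\rho$, independent of $t\sim p$, and the recursion $\omega_0=g_{\xi_0}(t)$, $\omega_k=g_{\xi_k}(\omega_{k-1})$ exhibits $(t,\omega_0,\omega_1,\ldots)$ as a time-homogeneous Markov chain with transition kernel $Q$ and initial law $p$. Since $p$ is $Q$-stationary, the shifted process $(\omega_k)_{k\geq 0}$ has law $\mathbb{P}$. Because $\pi$ preserves the $X$-coordinate, the first marginal of $\bar\mu:=\pi(\nu\times\hat\mu)$ is $\mathbb{P}$.

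For the disintegration, I would test $\bar\mu$ against an arbitrary bounded measurable $\varphi(\omega,x)$. In the product $\nu(d\xi)\,dp(t)\,d\mu_t(x)$ the variable $x$ is conditionally independent of $\xi$ given $t$, hence conditionally independent of $\omega$ given $t$, with conditional law $\mu_t$. The remaining ingredient is the conditional law of $t$ given $\omega$: by the Markov and stationarity properties of $(t,\omega_0,\omega_1,\ldots)$, its time reversal is again Markov with transition kernel $q$, so conditional on $\omega$ the variable $t$ depends only on $\omega_0$ and has law $q(\omega_0,\cdot)$. Combining these two conditional laws gives
\begin{equation*}
\int\varphi\,d\bar\mu \;=\; \int\!\!\int\!\!\int \varphi(\omega,x)\,d\mu_t(x)\,q(\omega_0,dt)\,d\mathbb{P}(\omega),
\end{equation*}
which yields the claimed disintegration $\bar\mu_{\omega_0}=\int\mu_t\,q(\omega_0,dt)$. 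Specializing to $\hat\mu=p\times m$, so that $\mu_t=m$ for every $t$, collapses the inner integral to $m$ and gives $\bar\mu=\mathbb{P}\times m$. I expect the main obstacle to be rigorously justifying the time-reversal step, which relies on the existence and defining property of the dual transition $q$ recalled at the start of the section; a clean reference is~\cite[Chapter~4]{Revuz84}.
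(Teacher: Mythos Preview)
The paper does not actually prove this lemma; it is quoted verbatim as \cite[Lemma~3.1]{matias:2022} and used as a black box. Your proposal is therefore not competing against an in-paper argument but supplying one where none is given.

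Your outline is correct. The intertwining is the routine computation you describe. For the measure-theoretic part, the key observation---that under $\nu\times p$ the sequence $(t,\omega_0,\omega_1,\dots)$ is a stationary Markov chain with kernel $Q$, so its time reversal has kernel $q$ and hence the conditional law of $t$ given $\omega$ is $q(\omega_0,\cdot)$---is exactly the right mechanism, and combining it with the conditional independence of $x$ from $\xi$ given $t$ yields the disintegration formula cleanly. The only point worth tightening is the time-reversal justification you flag yourself: you need that for the stationary chain $(t,\omega_0,\omega_1,\dots)$ the conditional law of the ``past'' $t$ given the ``future'' $(\omega_0,\omega_1,\dots)$ reduces, by the Markov property, to the conditional law given $\omega_0$ alone, and that this equals $q(\omega_0,\cdot)$ by the defining duality relation $\int_B Q(t,A)\,dp(t)=\int_A q(s,B)\,dp(s)$. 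Both facts are standard (the reference you give to \cite[Chapter~4]{Revuz84} is appropriate), so there is no genuine gap.
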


We will use this conjugacy between the skew-products associated with the Markov random map and its iid representation to transfer some results obtained for random maps driven by Bernoulli measures to the more general case of Markov noise.

From now on, the following standing assumption is needed to guarantee the correspondence between the stationary measures below.

\paragraph{\bf Standing Assumption:}
The transition probability \( Q(t, A) \) is \emph{uniquely ergodic}. This means that there is exactly one $Q$-stationary probability measure \( p \) on $T$.

Matias in~\cite[Theorem~2.1]{matias:2022} also characterizes the stationary measures and establishes the following relation between these measures and the Markovian probabilities: 

\begin{thm} \label{thm:Matias} Let $\hat \mu$ be a probability measure on $\hat X$. The following are equivalent
\begin{enumerate}[itemsep=0.1cm]
    \item $\hat{\mu}$ is $h$-stationary (i.e.,  $\hat\mu  = \int h_s \hat\mu \, d\rho(s)$); 
    \item $\nu\times \hat{\mu}$ is $H$-invariant (i.e., $H(\nu\times \hat{\mu})=\nu\times \hat{\mu}$);
    \item $\hat\mu$ is $\hat{P}$-stationary (i.e., $\hat\mu  = \int \hat P((t,x),\cdot)\, d\hat\mu(t,x)$);
    \item $d\hat \mu =\hat\mu_t\, dp(t)$, where $\hat{\mu}_t=\int f_t \hat{\mu}_s \, q(t,ds)$ for $p$-a.e.~$t\in T$.  
\end{enumerate}
 Moreover, the functions 
 $$
    d\bar{\mu}= \bar{\mu}_{\omega_0} \, d\mathbb{P}(\omega) \xmapsto{ \ \Theta \ }  d\hat{\mu}= (f_t\bar{\mu}_t )  \, dp(t) \quad \text{and} \quad d\hat{\mu}= \hat{\mu}_{t} \, dp(t) \xmapsto{\ \Xi \ } 
  d\bar{\mu} = \big(\int \hat \mu_t \, q(\omega_0,dt)\big)\, d\mathbb{P}(\omega)
 $$
 map, respectively, an (ergodic) $F$-invariant $\mathbb{P}$-Markovian probability measure $\bar\mu$ on $\Omega \times X$  to an (ergodic) $\hat P$-stationary  probability measure $\hat \mu$ on $\hat X$ and vice-versa. 
 \end{thm}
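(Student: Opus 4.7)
The plan is to establish the four equivalences through a central hub at statement (i), and then promote the last two to the bijection $\Theta,\Xi$ using the semiconjugacy of Lemma~\ref{lem:matias}. The equivalences (i)$\Leftrightarrow$(ii) and (i)$\Leftrightarrow$(iii) come essentially for free: the first is the classical identification of $h$-stationary measures on $\hat X$ with $H$-invariant product measures of the form $\nu\times\hat\mu$ (proved in the usual way using that $\nu$ is Bernoulli and cylinder test functions on $\Sigma$), and the second is a direct reading of~\eqref{eq:P-h}, which exhibits $\hat P$ as the one-step transition kernel of the i.i.d.~random map $h$.

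The core of the theorem is the equivalence (i)$\Leftrightarrow$(iv). Projecting the identity $\hat\mu=\int h_s\hat\mu\,d\rho(s)$ to the $T$-factor and using~\eqref{eq:representation}, the first marginal of $\hat\mu$ is $Q$-stationary, hence equal to $p$ by the Standing Assumption of unique ergodicity. So $\hat\mu$ disintegrates as $d\hat\mu=\hat\mu_t\,dp(t)$, and for any bounded measurable $\varphi$ on $\hat X$, $h$-stationarity reads
\[
\int \varphi\,d\hat\mu=\iiint \varphi(u,f_u(x))\,d\hat\mu_t(x)\,Q(t,du)\,dp(t).
\]
Invoking the duality $\int G(t,u)\,Q(t,du)\,dp(t)=\int G(t,u)\,q(u,dt)\,dp(u)$ with $G(t,u)=\int \varphi(u,f_u(x))\,d\hat\mu_t(x)$ swaps the roles of $t$ and $u$, and uniqueness of the disintegration with respect to $p$ then yields $\hat\mu_u=\int f_u\hat\mu_t\,q(u,dt)$ for $p$-a.e.~$u$, which is (iv). The reverse direction (iv)$\Rightarrow$(i) is the same chain of identities read in reverse.

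For the bijection, given $\hat\mu$ satisfying (iii), statement (ii) provides that $\nu\times\hat\mu$ is $H$-invariant, and the semiconjugacy $\pi\circ H=F\circ\pi$ from Lemma~\ref{lem:matias} transports this to an $F$-invariant $\mathbb{P}$-Markovian measure $\pi(\nu\times\hat\mu)$ with disintegration $\bar\mu_{\omega_0}=\int\hat\mu_t\,q(\omega_0,dt)$; this is $\Xi(\hat\mu)$. To invert, I would first derive from the $F$-invariance of a $\mathbb{P}$-Markovian $\bar\mu$ the \emph{backward} identity $\bar\mu_{\omega_0}=\int f_t\bar\mu_t\,q(\omega_0,dt)$, by disintegrating $\mathbb{P}$ along the pair $(\omega_0,\omega_1)$ and using the same $Q/q$-duality to express the conditional law of $\omega_0$ given the future as $q(\omega_1,d\omega_0)$. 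An algebraic check then shows that $\Theta(\bar\mu)_t=f_t\bar\mu_t$ satisfies (iv) and that $\Theta\circ\Xi=\Xi\circ\Theta=\mathrm{id}$. Ergodicity transfers in both directions because $\pi$ is a measurable factor map, and the assignment $\hat\mu\mapsto\nu\times\hat\mu$ transfers ergodicity between the Markov chain generated by $\hat P$ and the skew-product $H$ via the standard Bernoulli theory. The main obstacle is the step (i)$\Rightarrow$(iv): it simultaneously requires unique ergodicity of $Q$ (to pin down the first marginal) and a careful use of the dual kernel $q$ (to switch forward and backward transitions), while keeping all disintegrations essentially unique and compatible.
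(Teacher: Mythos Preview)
Your proof is correct. The paper, however, does not prove this theorem itself: it is quoted from \cite[Theorem~2.1]{matias:2022}, so there is no in-paper argument to compare against for the equivalences (i)--(iv) or the well-definedness of $\Theta$ and $\Xi$.

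You go further than the stated theorem by also proving $\Theta\circ\Xi=\Xi\circ\Theta=\mathrm{id}$, which the paper treats separately in Proposition~\ref{prop:one-to-one}, and here the approaches genuinely diverge. Both arguments need the backward identity $\bar\mu_t=\int f_s\bar\mu_s\,q(t,ds)$ for an $F$-invariant $\mathbb P$-Markovian $\bar\mu$. The paper obtains it by a detour through the second kernel $\tilde P$: it invokes Theorem~\ref{thm:Matias2}, notes that the $\tilde P$-stationary measure corresponding to $\bar\mu$ has fibers $\tilde\mu_t=\bar\mu_t$, and then reads the identity off the characterization of $\tilde P$-stationarity there. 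You instead derive it directly from $F$-invariance, testing against functions of $(\omega_0,x)$ and rewriting the joint law of $(\omega_0,\omega_1)$ under $\mathbb P$ as $q(\omega_1,d\omega_0)\,dp(\omega_1)$. Your route is shorter and avoids introducing $\tilde P$ altogether; the paper's detour has the side benefit of making the $\hat P/\tilde P$ correspondence explicit, which it records in the remark following Proposition~\ref{prop:one-to-one}. One small point: your ergodicity transfer via the factor map $\pi$ only covers the direction $\Xi$; for $\Theta$ you need the affine bijection you have just established (a non-ergodic $\Theta(\bar\mu)$ would, under the affine injective $\Xi$, decompose $\bar\mu$ nontrivially).
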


\begin{rem} \label{eq:mu-hat-bar} In view of Lemma~\ref{lem:matias} and Theorem~\ref{thm:Matias}, the function $\Xi$ can be extended for any probability measure $\hat{\mu}$ on $\hat X$ with first marginal $p$ as follows:
\begin{equation*} 
\Xi(\hat\mu) = \pi(\nu\times \hat\mu).
\end{equation*}
Moreover, $\Xi(\hat\mu)$ is still a $\mathbb{P}$-Markovian measure although it may be not $F$-invariant. The $F$-invariance follows when $\hat \mu$ is $h$-stationary (or equivalently $\hat P$-stationary or $\nu\times \hat \mu$ is $H$-invariant). 
\end{rem}

In the study of Markovian random iterations, alternatively to~\eqref{eq:Markov-chain}, we can consider
the Markov chain $\{\tilde{X}_n\}_{n\geq 0}$, where
$\tilde X_n(\omega)= (\omega_n, f^n_\omega(x))$ for $n\geq 0$.
In this case, the transition probability is given by
$$
\tilde{P}((t, x), B)= \int 1_B(s,f_t(x))\, Q(t,ds), \quad  \text{for} \ (t,x) \in T\times X, \ \ B\in \mathscr{A}\otimes \mathscr{B}.
$$
Similarly, Matias also stated the following result for $\tilde{P}$-stationary measures (see also~\cite{crauel1991markov}).

\begin{thm} \label{thm:Matias2} Let $\tilde \mu$ be a probability measure on $\hat X$. The following are equivalent
\begin{enumerate}[itemsep=0.1cm]
    \item $\tilde\mu$ is $\tilde{P}$-stationary (i.e., $\tilde\mu  = \int \tilde P((t,x),\cdot)\, d\tilde\mu(t,x)$);
    \item $d\tilde \mu =\tilde\mu_t\, dp(t)$, where $\tilde{\mu}_t=\int f_s \tilde{\mu}_s \, q(t,ds)$ for $p$-a.e.~$t\in T$.  
\end{enumerate}
 Moreover, there is a one-to-one correspondence between the set of (ergodic) $\tilde{P}$-stationary measures on $\hat X$ and the set of (ergodic) $F$-invariant $\mathbb{P}$-Markovian measures on $\Omega\times X$. Namely,
 $$
   d\bar\mu=\bar \mu_{\omega_0} \, d\mathbb{P}(\omega) \mapsto d\tilde{\mu}=\bar\mu_t \, dp(t)
 \qquad \text{and} \qquad
   d\tilde\mu=\tilde \mu_{t} \, dp(t) \mapsto d\bar{\mu}=\tilde\mu_{\omega_0} \, d\mathbb{P}(\omega).
 $$
\end{thm}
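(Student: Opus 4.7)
My plan is to dissect the theorem into three tasks: the equivalence of items (1) and (2), the description of the bijective correspondence with $F$-invariant $\mathbb{P}$-Markovian measures, and the preservation of ergodicity. The guiding intuition is that $\tilde{P}$ encodes the evolution \emph{``apply $f_t$, then draw a new driver $s$ from $Q(t,\cdot)$''}, so after exchanging the roles of the two symbols via the duality between $Q$ and $q$, the stationarity condition is transcribed directly into the disintegration relation in (2).

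For the equivalence $(1)\Leftrightarrow(2)$, the first step is to test $\tilde{P}$-stationarity against rectangles $B = A\times X$, which yields $p_{\tilde\mu} = \int Q(t,\cdot)\,dp_{\tilde\mu}(t)$ for the first marginal of $\tilde\mu$. Unique ergodicity of $Q(t,A)$ then forces $p_{\tilde\mu} = p$, producing a disintegration $d\tilde\mu = \tilde\mu_t\,dp(t)$. Next I would expand
$$
\int \tilde{P}((t,x),B)\,d\tilde\mu(t,x) = \int\!\!\int\!\!\int 1_B(s,f_t(x))\,Q(t,ds)\,d\tilde\mu_t(x)\,dp(t),
$$
apply the duality $Q(t,ds)\,dp(t) = q(s,dt)\,dp(s)$ to swap the roles of $t$ and $s$, and recognize the inner measure as $f_t\tilde\mu_t$. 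Comparing the result with $\tilde\mu(B) = \int \tilde\mu_s(B_s)\,dp(s)$ and invoking essential uniqueness of the disintegration yields $\tilde\mu_s = \int f_t\tilde\mu_t\,q(s,dt)$ for $p$-a.e.~$s$. The converse is read off the same chain of identities in reverse.

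For the correspondence, my preferred route is to factor it through Theorem~\ref{thm:Matias}. Starting with a $\tilde{P}$-stationary $\tilde\mu$ having disintegration $\tilde\mu_t$, I set $\hat\mu_t := f_t\tilde\mu_t$ and verify item~(4) of Theorem~\ref{thm:Matias} via
$$
\int f_t\hat\mu_s\,q(t,ds) = f_t\int f_s\tilde\mu_s\,q(t,ds) = f_t\tilde\mu_t = \hat\mu_t,
$$
where the middle equality is the stationarity relation for $\tilde\mu$. Hence $\hat\mu$ is $\hat{P}$-stationary, and Theorem~\ref{thm:Matias} produces a corresponding $F$-invariant $\mathbb{P}$-Markovian $\bar\mu$; the explicit formula for $\Xi$ then gives $\bar\mu_{\omega_0} = \int \hat\mu_t\,q(\omega_0,dt) = \int f_t\tilde\mu_t\,q(\omega_0,dt) = \tilde\mu_{\omega_0}$, so the two disintegrations match. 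In the reverse direction, starting from $\bar\mu$, I set $\tilde\mu_t := \bar\mu_t$ and test $F$-invariance against $\varphi(\omega,x) = \psi(\omega_0,x)$: since $(\omega_0,\omega_1)$ has joint law $Q(t,ds)\,dp(t)$ under $\mathbb{P}$ and $\bar\mu_{\omega_0}$ depends on $\omega$ only through $\omega_0$, the two sides of $\int \varphi\circ F\,d\bar\mu = \int \varphi\,d\bar\mu$ become $\int \tilde{P}\psi\,d\tilde\mu$ and $\int \psi\,d\tilde\mu$, respectively, which is precisely $\tilde{P}$-stationarity.

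Ergodicity will be transferred via affinity: a convex combination of invariant measures disintegrates (up to a $p$-null set of drivers) into the corresponding convex combination of fiber measures, so the bijection exchanges extreme points, and on either side the ergodic measures are exactly the extreme points of the convex set of invariant probability measures. The principal technical subtlety throughout is the careful bookkeeping of the $Q$--$q$ duality, which must be invoked at the correct moment to translate between the ``apply-then-update'' temporal order native to $\tilde{P}$ and the ``update-then-apply'' structure associated with $\hat{P}$ in Theorem~\ref{thm:Matias}; once this swap is correctly positioned, no ideas beyond those already underlying Theorem~\ref{thm:Matias} are required.
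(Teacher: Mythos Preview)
The paper does not include its own proof of Theorem~\ref{thm:Matias2}: it attributes the result to Matias and Crauel, and remarks immediately after the statement that the one-to-one correspondence between $\mathcal{S}(\tilde P)$ and $\mathcal{M}$ is ``evident from the definition of $\tilde\mu$ and $\bar\mu$'', the two displayed maps being manifestly inverse on fibre measures. Your proposal is a correct and complete filling-in of the details the paper omits.

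Your organisation of the correspondence differs slightly from the paper's logical flow: for the direction $\tilde\mu \to \bar\mu$ you factor through Theorem~\ref{thm:Matias} via the intermediate $\hat\mu_t := f_t\tilde\mu_t$, whereas the paper treats the bijection $\mathcal{S}(\tilde P)\leftrightarrow\mathcal{M}$ as self-evident (since $\bar\mu_{\omega_0}=\tilde\mu_{\omega_0}$ tautologically) and only afterwards, in the Remark following Proposition~\ref{prop:one-to-one}, records the passage $\tilde\mu \mapsto \hat\mu$ as a \emph{consequence} of both theorems combined with Proposition~\ref{prop:one-to-one}. Your route is logically sound---there is no circularity, since you invoke only Theorem~\ref{thm:Matias}---and has the merit of making the $\tilde P$/$\hat P$ link explicit from the outset; the paper's organisation keeps the two theorems on an equal footing as independent citations from the literature. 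The underlying content (the $Q$/$q$ duality swap, the identification $\bar\mu_{\omega_0}=\tilde\mu_{\omega_0}$, and the transfer of ergodicity via extremality) is the same.
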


The one-to-one relation between the set $\mathcal{S}(\tilde P)$ of $\tilde{P}$-stationary measures and the set $\mathcal{M}$ of $F$-invariant $\mathbb{P}$-Markovian measures is evident from the definition of $\tilde \mu$ and $\bar\mu$ in the above theorem. Matias also claimed, without proof, that there is a one-to-one correspondence between the set $\mathcal{S}(\hat P)$ of $\hat{P}$-stationary measures and $\mathcal{M}$. However, the injectivity of both maps $\Theta$ and $\Xi$ in Theorem~\ref{thm:Matias} is not immediate. In the following proposition, we discuss this issue. 

\begin{prop} \label{prop:one-to-one}
The function $\Xi : \mathcal{S}(\hat P) \to \mathcal{M}$ is bijective and  
$\Theta \circ \Xi = \mathrm{id}_{\mathcal{S}(\hat P)}$. 
In particular,  $\Theta  = \Xi^{-1}$. 
\end{prop}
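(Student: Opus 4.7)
My plan is to prove that $\Theta$ and $\Xi$ are mutually inverse. Since Theorem~\ref{thm:Matias} already guarantees that $\Xi : \mathcal{S}(\hat P) \to \mathcal{M}$ and $\Theta : \mathcal{M} \to \mathcal{S}(\hat P)$ are well-defined, it suffices to verify the two identities $\Theta \circ \Xi = \mathrm{id}_{\mathcal{S}(\hat P)}$ and $\Xi \circ \Theta = \mathrm{id}_\mathcal{M}$, after which the bijectivity of $\Xi$ and the equality $\Theta = \Xi^{-1}$ follow formally.

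The identity $\Theta \circ \Xi = \mathrm{id}_{\mathcal{S}(\hat P)}$ is essentially a direct computation. Given $\hat\mu \in \mathcal{S}(\hat P)$ with disintegration $d\hat\mu = \hat\mu_t\, dp(t)$, item~(iv) of Theorem~\ref{thm:Matias} provides the cocycle identity $\hat\mu_t = \int f_t \hat\mu_s\, q(t,ds)$ for $p$-a.e.~$t$. Applying $\Xi$ produces $\bar\mu$ with fibres $\bar\mu_{\omega_0} = \int \hat\mu_s\, q(\omega_0, ds)$, and applying $\Theta$ then gives back a measure with fibres $f_t\bar\mu_t = \int f_t\hat\mu_s\, q(t,ds) = \hat\mu_t$, as required. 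This step already shows that $\Xi$ is injective and that $\Theta$ is surjective onto $\mathcal{S}(\hat P)$.

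For the reverse identity $\Xi \circ \Theta = \mathrm{id}_\mathcal{M}$, the crux is to show that
\[
\bar\mu_t = \int f_s \bar\mu_s\, q(t, ds) \qquad \text{for $p$-a.e.~$t \in T$}
\]
for every $\bar\mu \in \mathcal{M}$. Granting this, if $\Theta(\bar\mu) = \hat\mu$ with $\hat\mu_s = f_s\bar\mu_s$, then $\Xi(\Theta(\bar\mu))_t = \int \hat\mu_s\, q(t,ds) = \int f_s\bar\mu_s\, q(t,ds) = \bar\mu_t$, as needed. To derive the displayed identity, I would test $F$-invariance against functions of the form $\varphi(\omega,x) = \psi(\omega_0)\phi(x)$, with $\psi$ and $\phi$ arbitrary bounded measurable functions on $T$ and $X$. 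Expanding $\int \varphi\circ F\, d\bar\mu = \int \varphi\, d\bar\mu$ through $d\bar\mu = \bar\mu_{\omega_0}\,d\mathbb{P}(\omega)$ and integrating out all coordinates of $\omega$ past $\omega_1$ reduces both sides to integrals on $T\times T$ against the joint law $Q(\omega_0, d\omega_1)\, dp(\omega_0)$. Then the duality $Q(t, du)\, dp(t) = q(u, dt)\, dp(u)$ transforms the side involving $f_{\omega_0}\bar\mu_{\omega_0}$ into an integral against $q(\omega_1, d\omega_0)\, dp(\omega_1)$; comparing coefficients against the arbitrary $\psi$ and $\phi$ and invoking the essential uniqueness of disintegrations yields the identity.

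The main obstacle I anticipate is the bookkeeping involved in this last derivation: the interplay between the Markov kernel $Q$ and its dual $q$ on the two-coordinate marginal of $\mathbb{P}$ is precisely what interchanges the cocycle identity for $\hat\mu$ furnished by Theorem~\ref{thm:Matias} with the one needed for $\bar\mu$, making $\Theta$ and $\Xi$ mutually inverse. Once this identity is in place, the remaining verifications are formal.
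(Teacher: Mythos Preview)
Your proposal is correct and follows essentially the same structure as the paper's proof: both verify $\Theta\circ\Xi=\mathrm{id}_{\mathcal{S}(\hat P)}$ by the same direct computation using item~(iv) of Theorem~\ref{thm:Matias}, and both reduce $\Xi\circ\Theta=\mathrm{id}_{\mathcal{M}}$ to the cocycle identity $\bar\mu_t=\int f_s\bar\mu_s\,q(t,ds)$ for $p$-a.e.~$t$.

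The only difference lies in how that cocycle identity is obtained. The paper simply invokes Theorem~\ref{thm:Matias2}: given $\bar\mu\in\mathcal{M}$, the measure $d\tilde\mu=\bar\mu_t\,dp(t)$ is $\tilde P$-stationary, and item~(ii) of Theorem~\ref{thm:Matias2} is precisely the desired identity. You instead propose to derive it from scratch by testing $F$-invariance against functions $\psi(\omega_0)\phi(x)$ and using the duality $Q(t,du)\,dp(t)=q(u,dt)\,dp(u)$. Your computation is valid and amounts to reproving the relevant implication of Theorem~\ref{thm:Matias2} inline; it has the advantage of being self-contained, while the paper's route is shorter because it leverages an already-stated result.
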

\begin{proof}
    Let $\hat\mu \in \mathcal{S}(\hat P)$. 
    By the equivalences shown in Theorem~\ref{thm:Matias}, we have that 
    \begin{equation} \label{eq:pp1}
    d\hat{\mu}= \hat \mu_t \, dp(t), \quad \text{where} \quad \hat\mu_t = \int f_t\hat{\mu}_s, q (t, ds).   
    \end{equation}
    Consider $\bar\mu = \Xi(\hat\mu)$ which,  according again to Theorem~\ref{thm:Matias}, belongs to $\mathcal{M}$ and satisfies 
    \begin{equation} \label{eq:pp2}
         d\bar{\mu} = \bar\mu_{\omega_0} \, d\mathbb{P}(\omega), \quad \text{where} \quad \bar\mu_{\omega_0}=\int \hat \mu_s \, q(\omega_0, ds).
    \end{equation}
Hence, by~\eqref{eq:pp2} and~\eqref{eq:pp1}, we have that 
\begin{equation} \label{eq:pp3}
f_t\bar{\mu}_t = \int f_t \hat\mu_s \, q(t,ds) = \hat\mu_t.
\end{equation}
Denote by $\hat \mu ^* = \Theta(\bar\mu)=(\Theta\circ \Xi)(\hat\mu)$. Then, by the definition of $\Xi$ and~\eqref{eq:pp3},  
    $
    d\hat\mu^* = (f_t\bar{\mu}_t) \, dp(t) = \hat\mu_t \, dp(t) = d\hat\mu. 
    $ 
 This implies that $\hat\mu^*=\hat\mu$ and concludes that $\Theta \circ \Xi = \mathrm{id}_{\mathcal{S}(\hat P)}$. From here, we easily get that $\Xi:\mathcal{S}(\hat P) \to \mathcal{M}$ is injective. Indeed, if $\Xi(\hat\mu)=\Xi(\hat\mu^*)$ for some $\hat\mu$ and $\hat\mu^*$ in $\mathcal{S}$, then $\hat\mu = \Theta \circ \Xi (\hat\mu)=\Theta \circ \Xi (\hat\mu^*)=\hat\mu^*$.

Now, we prove that $\Xi\circ \Theta =\mathrm{id}_{\mathcal{M}}$. To do this, according to Theorem~\ref{thm:Matias}, if $d\bar\mu=\bar\mu_{\omega_0}\,d\mathbb{P}(\omega)$, then $\hat\mu=\Theta(\mu)$ satisfies $d\hat\mu= (f_t\bar\mu_t)\, dp(t)$. Consider now $\bar\mu^*=\Xi(\hat\mu)$. Hence,  
\begin{equation} \label{eq:tt1}
    d\bar\mu^*= \big(\int f_t\bar\mu_t \, q(\omega_0,dt)\big)\, d\mathbb{P}(\omega).
\end{equation} 
On the other hand, consider also the $\tilde{P}$-stationary measure $\tilde \mu$ associated with $\bar\mu$ given by Theorem~\ref{thm:Matias2}. Namely, 
$d\tilde \mu = \tilde{\mu}_t  \, dp(t)$, where $\tilde{\mu}_t=\bar{\mu}_t$. Since this measure is $\tilde{P}$-stationary, by Theorem~\ref{thm:Matias2} we also have that 
\begin{equation} \label{eq:tt2}
    \bar \mu _t = \tilde \mu_t = \int f_s \tilde{\mu}_s\, q(t,ds) = \int f_s \bar{\mu}_s\, q(t,ds).
\end{equation}
Comparing~\eqref{eq:tt1} with~\eqref{eq:tt2}, we arrive that $d\bar\mu^*=\bar \mu_{\omega_0}\,d\mathbb{P}=d\bar\mu$.  Thus, $\mu^*=\bar\mu$ concluding that $\Xi\circ \Theta =\mathrm{id}_{\mathcal{M}}$. From here we get hat $\Xi$ is surjective since for every $\bar\mu$, $\Theta(\bar\mu)\in \mathcal{S}$ and $\Xi(\Theta(\bar\mu))=\bar\mu$. This concludes the proof of the proposition. 
\end{proof}
\begin{rem} As consequence of Theorems~\ref{thm:Matias} and~\ref{thm:Matias2} and the one-to-one relation shown in Proposition~\ref{prop:one-to-one}, we have the following one-to-one correspondence between stationary measures of $\hat{P}((t,x),B)$ and $\tilde P((t,x),B)$:
$$
\Phi: \mathcal{S}(\hat P) \to \mathcal{S}(\tilde P),  \quad d\hat\mu = \hat \mu_t \, dp(t) \mapsto d\tilde \mu = \big(\int \hat\mu_s \, q(t,ds) \big)\, dp(t)
$$
and
$$
\Phi^{-1}: \mathcal{S}(\tilde P) \to \mathcal{S}(\hat P), \quad d\tilde\mu = \tilde \mu_t \, dp(t) \mapsto d\hat \mu =  f_t\tilde\mu_t \, dp(t).
$$
\end{rem}

As a consequence of these results, we get the following:

\begin{lem} \label{lem:Markov-Bernoulli} Let $\bar \mu$ be an $F$-invariant $\mathbb{P}$-Markovian probabilty measure on $\Omega \times X$, where $\mathbb{P}$ is a  Bernoulli measure. Then, 
 there exists a probability measure $\mu$ on $X$ such that $\bar\mu=\mathbb{P}\times \mu$.    
\end{lem}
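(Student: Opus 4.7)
The plan is to invoke Theorem~\ref{thm:Matias} together with Proposition~\ref{prop:one-to-one}, after identifying the dual transition probability $q(t,A)$ in the Bernoulli setting. First I would observe that when $\mathbb{P}=p^{\mathbb{N}}$ is Bernoulli, the transition probability of $\mathbb{P}$ is $Q(t,A)=p(A)$ for every $t\in T$. The defining relation
\[
\int_B Q(t,A)\,dp(t)=\int_A q(t,B)\,dp(t)\quad\text{for all }A,B\in\mathscr{A}
\]
then reduces to $p(A)p(B)=\int_A q(t,B)\,dp(t)$, so the essentially unique choice of dual is $q(t,\cdot)=p$ for $p$-a.e.~$t\in T$. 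I would also note in passing that the standing assumption of unique ergodicity of $Q$ is automatic here, since any $Q$-stationary measure $\nu$ satisfies $\nu(A)=\int p(A)\,d\nu(t)=p(A)$.

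Next, since $\bar\mu$ is $F$-invariant and $\mathbb{P}$-Markovian, Proposition~\ref{prop:one-to-one} gives $\bar\mu=\Xi(\hat\mu)$ with $\hat\mu=\Theta(\bar\mu)\in\mathcal{S}(\hat P)$. The explicit formula for $\Xi$ recorded in Theorem~\ref{thm:Matias} then yields
\[
\bar\mu_{\omega_0}=\int\hat\mu_t\,q(\omega_0,dt)=\int\hat\mu_t\,dp(t),
\]
where the second equality uses $q(\omega_0,\cdot)=p$. Hence the fiber measure $\bar\mu_{\omega_0}$ does not depend on $\omega_0$ and equals the single probability measure $\mu:=\int\hat\mu_t\,dp(t)$ on $X$.

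Substituting this into the disintegration $d\bar\mu=\bar\mu_{\omega_0}\,d\mathbb{P}(\omega)$ gives $d\bar\mu=\mu\,d\mathbb{P}(\omega)$, i.e.\ $\bar\mu=\mathbb{P}\times\mu$. There is no serious obstacle here: once the dual kernel is identified as $q(t,\cdot)=p$, the argument is essentially bookkeeping. If one wished to avoid invoking the $\Theta,\Xi$ machinery, a direct alternative would be to use $F$-invariance of $\bar\mu$ to derive the disintegration identity $\bar\mu_{\omega_1}=\int f_{\omega_0}\bar\mu_{\omega_0}\,dp(\omega_0)$ valid $\mathbb{P}$-a.s.\ (exploiting the independence of $\omega_0$ and $\omega_1$ under the Bernoulli measure), which again forces $\bar\mu_{\omega_1}$ to be $\mathbb{P}$-a.s.\ constant; this is morally the same computation.
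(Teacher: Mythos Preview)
Your proof is correct and follows essentially the same approach as the paper: identify that in the Bernoulli case the dual kernel is $q(t,\cdot)=p$, and then use the Matias characterization to conclude that the fiber measures are $p$-a.e.\ constant. The only cosmetic difference is that the paper invokes Theorem~\ref{thm:Matias2} (the $\tilde P$ version, giving $\bar\mu_t=\int f_s\bar\mu_s\,q(t,ds)$) rather than Theorem~\ref{thm:Matias} plus Proposition~\ref{prop:one-to-one} (the $\hat P$ version via $\Xi$), but the two routes are interchangeable and the key step is identical.
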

\begin{proof} Let $d\bar\mu = \bar \mu_{\omega} \, d\mathbb{P}(\omega)$. By Theorem~\ref{thm:Matias2}, the measure $\tilde{\mu}$ on $\hat X$ given by $d\tilde\mu= \bar{\mu}_t\, dp(t)$ is a $\tilde{P}$-stationary measure, where $p$ is the initial distribution on $T$ of $\mathbb{P}$. Then, according to the characterization of such stationary measures,
$$
\bar{\mu}_t = \int f_s \bar\mu_s \, q(t,ds) \quad \text{for $p$-a.e.~$t\in T$.}
$$
However, since $\mathbb{P}$ is a Bernoulli measure,  $\mathbb{P}=p^\mathbb{N}$, $Q(t,\cdot)=p$ and $q(t,\cdot)=p$. Thus,  $\bar\mu_t=\int f_s\bar\mu_s \, dp(s)$ is constant for $p$-a.e.~$t\in T$. This implies that $\bar\mu$ is a product measure as required.     
\end{proof}

\subsection{Basin of attraction under conjugacy}
 To conclude the section, we study the relation between the basins of attraction of an $H$-invariant measure of the form $\nu \times \hat \mu$ and the basin of attraction of the $F$-invariant $\mathbb{P}$-Markovian measure $\bar\mu=\pi(\nu\times \hat\mu)$. We infer this conclusion from the following more general results. 

 In what follows, $Y$ and $\tilde{Y}$ denote measurable spaces and $g:Y \to Y$ and $\tilde{g}:\tilde{Y} \to \tilde{Y}$ measurable functions. We assume that there is a measurable function $\Pi:\tilde{Y} \to Y$ such that 
 $$g\circ\Pi = \Pi \circ \tilde{g}.$$  
 Denote by $\tilde{\mu}$ an ergodic $\tilde{g}$-invariant probability measure on $\tilde{Y}$ and $\mu=\Pi\tilde{\mu}$ its projection on~$Y$. For a bounded measurable function $\varphi:Y\to \mathbb{R}$, consider the basin of attraction 
$$
B(\mu,\varphi)=\bigg\{x\in Y : \lim_{n\to\infty} \frac{1}{n}\sum_{i=0}^{n-1} \varphi(g^i(x))=\int \varphi \, d\mu\bigg\}
$$
and
$$
B(\mu)=\bigg\{x\in Y : \lim_{n\to\infty} \frac{1}{n}\sum_{i=0}^{n-1} \delta_{g^i(x)}=\mu\bigg\},
$$
where the limit is taken in the weak$^*$ topology. Analogously, we introduce the basins $B(\tilde\mu,\psi)$   for a bounded measurable function $\psi:\tilde Y \to \mathbb{R}$ and $B(\tilde \mu)$. 

\begin{lem} \label{claim:varphi-basin} The probability 
    $\mu$ is an ergodic $g$-invariant measure and for every measurable bounded function $\varphi:Y \to \mathbb{R}$ it holds that $
    \Pi (B(\tilde\mu,\varphi\circ \Pi))\subset B(\mu,\varphi)$.
\end{lem}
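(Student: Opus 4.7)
The plan is to verify both assertions by direct computation exploiting the intertwining relation $g\circ\Pi = \Pi\circ\tilde g$ and the pushforward definition $\mu = \Pi\tilde\mu$. The content of the lemma is essentially book-keeping; I do not expect any hard step.

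First I would check that $\mu$ is $g$-invariant: using functoriality of the pushforward,
$g\mu = g(\Pi\tilde\mu) = (g\circ\Pi)\tilde\mu = (\Pi\circ\tilde g)\tilde\mu = \Pi(\tilde g\tilde\mu) = \Pi\tilde\mu = \mu$.
For ergodicity, let $A\subset Y$ be measurable and $g$-invariant, and set $\tilde A = \Pi^{-1}(A)$. The intertwining yields $\tilde g^{-1}(\tilde A) = \tilde g^{-1}(\Pi^{-1}(A)) = (\Pi\circ\tilde g)^{-1}(A) = (g\circ\Pi)^{-1}(A) = \Pi^{-1}(g^{-1}(A)) = \Pi^{-1}(A) = \tilde A$, so $\tilde A$ is $\tilde g$-invariant. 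Moreover $\tilde\mu(\tilde A) = \mu(A)$. Ergodicity of $\tilde\mu$ thus forces $\mu(A)\in\{0,1\}$.

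For the basin inclusion, fix a bounded measurable $\varphi:Y\to\mathbb{R}$ and take $\tilde y\in B(\tilde\mu,\varphi\circ\Pi)$. By the change-of-variables formula, $\int (\varphi\circ\Pi)\,d\tilde\mu = \int \varphi\,d(\Pi\tilde\mu) = \int\varphi\,d\mu$, so
\[
\frac{1}{n}\sum_{i=0}^{n-1}(\varphi\circ\Pi)\bigl(\tilde g^i(\tilde y)\bigr) \;\longrightarrow\; \int \varphi\,d\mu.
\]
Iterating $g\circ\Pi = \Pi\circ\tilde g$ gives $\Pi\circ\tilde g^i = g^i\circ\Pi$ for all $i\geq 0$, hence $(\varphi\circ\Pi)(\tilde g^i(\tilde y)) = \varphi(g^i(\Pi(\tilde y)))$. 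Substituting in the above limit shows
\[
\frac{1}{n}\sum_{i=0}^{n-1}\varphi\bigl(g^i(\Pi(\tilde y))\bigr) \;\longrightarrow\; \int\varphi\,d\mu,
\]
which means $\Pi(\tilde y)\in B(\mu,\varphi)$. This establishes $\Pi(B(\tilde\mu,\varphi\circ\Pi))\subset B(\mu,\varphi)$ and completes the proof.
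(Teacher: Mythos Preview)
Your proof is correct and follows essentially the same approach as the paper: both verify $g$-invariance via pushforward functoriality, ergodicity by showing $\Pi^{-1}(A)$ is $\tilde g$-invariant whenever $A$ is $g$-invariant, and the basin inclusion by the identity $\varphi(g^i(\Pi(\tilde y)))=(\varphi\circ\Pi)(\tilde g^i(\tilde y))$ together with the change-of-variables formula $\int(\varphi\circ\Pi)\,d\tilde\mu=\int\varphi\,d\mu$.
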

\begin{proof}
    We have that $\mu$ is $g$-invariant since $g\mu=g\Pi\tilde\mu=\Pi\tilde g \tilde \mu = \Pi \tilde \mu = \mu$. Similarly,  $\mu$ is also ergodic. Indeed, if $A=g^{-1}(A)$, then $x\in \Pi^{-1}(A)=\Pi^{-1}(g^{-1}(A))$ if and only if $\Pi(\tilde g (x))=g(\Pi(x)) \in A $.  This is equivalent to $x \in \tilde g^{-1}(\Pi^{-1}(A))$. Consequently, $\tilde g^{-1}(\Pi^{-1}(A))=\Pi^{-1}(A)$. Now, $\mu(A)=\tilde{\mu}(\Pi^{-1}(A))\in \{0,1\}$ by the ergodicty of $\tilde\mu$.  

    On the other hand, let $x \in B(\tilde \mu, \varphi\circ\Pi)$. Then,
\begin{align*}
 \frac{1}{n} \sum _{i=0}^{n-1} \varphi ( g ^{i}(\Pi (x))) =
 \frac{1}{n} \sum _{i=0}^{n-1} \varphi ( \Pi ( \tilde g^i(x)))
\longrightarrow \int ( \varphi \circ \Pi ) \, d\tilde\mu 
=\int \varphi \, d\mu 
\end{align*}
as $n\to\infty$. 
Therefore, $\Pi (x) \in B(\mu, \varphi)$ and the desired inclusion is satisfied.  
\end{proof}

\begin{prop} \label{prop:basin}
    If $Y$ and $\tilde Y$ are both Polish spaces and $\tilde{\mu}(C_\Pi)=1$, where $C_\Pi$ are the continuity points of~$\Pi$, then 
    $\Pi(B(\tilde\mu))\subset B(\mu)$. 
\end{prop}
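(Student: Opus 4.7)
The plan is to reduce the statement to a standard continuous mapping / Portmanteau argument. Fix $x\in B(\tilde\mu)$ and consider the empirical measures
$$
\tilde\nu_n \eqdef \frac{1}{n}\sum_{i=0}^{n-1}\delta_{\tilde g^i(x)},
$$
which by the very definition of $B(\tilde\mu)$ converge to $\tilde\mu$ in the weak* topology on $\tilde Y$. Iterating the semiconjugacy relation $\Pi\circ\tilde g=g\circ\Pi$ gives $\Pi\circ\tilde g^i=g^i\circ\Pi$ for every $i\ge 0$, so pushing forward one obtains
$$
\Pi_{*}\tilde\nu_n = \frac{1}{n}\sum_{i=0}^{n-1}\delta_{g^i(\Pi(x))} \qquad \text{and}\qquad \Pi_{*}\tilde\mu = \mu.
$$
Thus proving $\Pi(x)\in B(\mu)$ is equivalent to showing $\Pi_{*}\tilde\nu_n\to\Pi_{*}\tilde\mu$ in the weak* topology on $Y$.

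Next I would verify this weak* convergence by testing against an arbitrary bounded continuous $\varphi:Y\to\mathbb R$. The composition $\varphi\circ\Pi$ is bounded and measurable on $\tilde Y$, and because $\varphi$ is continuous everywhere, every point of $C_\Pi$ is a continuity point of $\varphi\circ\Pi$. Hence the discontinuity set of $\varphi\circ\Pi$ is contained in $\tilde Y\setminus C_\Pi$, which by hypothesis has $\tilde\mu$-measure zero. The Portmanteau theorem in Polish spaces (which states that if $\tilde\nu_n\to\tilde\mu$ weakly and $\psi$ is a bounded measurable function whose discontinuity set is $\tilde\mu$-null, then $\int\psi\,d\tilde\nu_n\to\int\psi\,d\tilde\mu$) then yields
$$
\int \varphi\,d\Pi_{*}\tilde\nu_n=\int \varphi\circ\Pi\,d\tilde\nu_n \xrightarrow[n\to\infty]{}\int\varphi\circ\Pi\,d\tilde\mu=\int\varphi\,d\Pi_{*}\tilde\mu,
$$
which closes the argument.

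The only nontrivial ingredient is the Portmanteau statement for observables that are merely $\tilde\mu$-almost everywhere continuous, rather than everywhere continuous; this is the main (but very mild) obstacle. A self-contained proof, if one does not wish to quote it, sandwiches $\varphi\circ\Pi$ between its upper- and lower-semicontinuous envelopes, which coincide precisely on $C_\Pi$, and then applies the classical weak* convergence inequality against bounded semicontinuous functions together with $\tilde\mu(\tilde Y\setminus C_\Pi)=0$. Everything else (the commuting relation $g^i\circ\Pi=\Pi\circ\tilde g^i$ and the identity $\Pi_{*}\tilde\mu=\mu$) is immediate and already used in Lemma~\ref{claim:varphi-basin}.
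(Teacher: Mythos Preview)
Your proof is correct and follows essentially the same approach as the paper: both fix $x\in B(\tilde\mu)$, test against a bounded continuous $\varphi$ on $Y$, observe that $\varphi\circ\Pi$ is $\tilde\mu$-a.e.\ continuous because $\tilde\mu(C_\Pi)=1$, and then invoke the Portmanteau theorem for such observables to conclude. The only cosmetic difference is that the paper routes the conclusion through a countable determining class $S$ and the inclusion of Lemma~\ref{claim:varphi-basin}, whereas you argue directly against all bounded continuous $\varphi$.
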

 \begin{proof} Since $Y$ is Polish space, we have a countable set $S$ of continuous bounded functions that determines the weak$^*$ topology on the set of probability measure of $Y$, \cite[Proposition~5.1]{BNNT22}. 
Let  $\varphi \in S$. By Lemma~\ref{claim:varphi-basin}, we have that $\Pi(B(\tilde \mu,\varphi \circ \Pi))  \subset B(\mu, \varphi)$. Then  
 \begin{equation} \label{eq:inclusion}
     \Pi \bigg(\bigcap_{\varphi \in S} B(\tilde \mu,\varphi \circ \Pi)\bigg) \subset \bigcap_{\varphi \in S} \Pi \bigg(B(\tilde  \mu,\varphi \circ \Pi)\bigg)  \subset  \bigcap_{\varphi \in S} B(\mu, \varphi) = B(\mu). 
 \end{equation}     
Now, since $\varphi$ is a bounded continuous function, $\varphi \circ \Pi$ is a bounded measurable function and its set of continuity points $C_{\varphi\circ \Pi}$ contains~$C_{\Pi}$. Thus, $\tilde\mu(C_{\varphi\circ \Pi})=1$. Since for any $x\in B(\tilde\mu)$,  
the sequence of probability measures $\tilde{\mu}_n(x)=\frac{1}{n}(\delta_{x}+\dots+\delta_{\tilde{g}^{n-1}(x)})$ converges to $\tilde{\mu}$ in the weak$^*$ topology, by Portmanteau theorem~\cite[Theorem~13.16]{Klenke2008}, we get that  
$$
 \frac{1}{n}\sum_{i=0}^{n-1} \varphi(\Pi(\tilde{g}^i(x))) = \int \varphi\circ \Pi \, d\tilde{\mu}_n(x) \to \int \varphi\circ \Pi \, d\tilde{\mu}  \quad \text{as $n\to \infty$}.  
$$
Hence, $B(\tilde\mu) \subset B(\tilde\mu,\varphi\circ \Pi)$ and therefore $\Pi(B(\tilde\mu)) \subset B(\mu)$ by~\eqref{eq:inclusion}. 
 \end{proof}

\begin{cor} \label{cor:basin} If $T$ is a finite set and $X$ is a Polish space, then $\pi(B(\nu\times \hat\mu)) \subset B(\bar\mu)$ for every $\hat P$-stationary measure $\hat\mu$ on $\hat X$,  where $\bar\mu=\pi(\nu\times \hat\mu)$.
\end{cor}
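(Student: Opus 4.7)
The plan is to reduce the corollary directly to Proposition~\ref{prop:basin}. I would set $\tilde Y=\Sigma\times\hat X$, $Y=\Omega\times X$, $\tilde g=H$, $g=F$, $\Pi=\pi$, and $\tilde\mu=\nu\times\hat\mu$, so that $\mu=\Pi\tilde\mu=\bar\mu$ by definition. The semiconjugacy $\pi\circ H=F\circ\pi$ is Lemma~\ref{lem:matias}, and the $\hat P$-stationarity of $\hat\mu$ makes $\nu\times\hat\mu$ an $H$-invariant probability by Theorem~\ref{thm:Matias}. Since $T$ is finite (hence discrete and Polish) and $X$ is Polish, both $\Omega\times X=T^\mathbb{N}\times X$ and $\Sigma\times\hat X=[0,1]^\mathbb{N}\times T\times X$ are Polish. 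The only remaining hypothesis of Proposition~\ref{prop:basin} to verify is $(\nu\times\hat\mu)(C_\pi)=1$.

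To establish full-measure continuity, I would fix an explicit random mapping representation of $Q(t,A)$ as in \cite[Proposition~1.5]{levin2017markov}. Enumerating $T=\{a_1,\dots,a_\ell\}$ and setting $I_{t,j}=[\sum_{i<j}Q(t,\{a_i\}),\sum_{i\leq j}Q(t,\{a_i\}))$, define $g_s(t)=a_j$ for $s\in I_{t,j}$. Since $T$ is finite, for each $t\in T$ the function $s\mapsto g_s(t)$ is a step function whose set $D_t\subset S$ of discontinuity points is finite, hence $\rho$-null. Thus $E=\bigcap_{t\in T}(S\setminus D_t)$ has full $\rho$-measure and $\nu(E^\mathbb{N})=1$.

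Next I would show $E^\mathbb{N}\times\hat X\subset C_\pi$. Recall $\pi(\xi,(t,x))=(\omega,x)$ with $\omega_{k-1}=g^k_\xi(t)$ for $k\geq 1$. Fix $(\xi,(t,x))$ with $\xi\in E^\mathbb{N}$ and $N\geq 1$. Because $T$ is discrete, it suffices to control the $\xi'$-variable and keep $t$ fixed; since $\xi_i\in E$, for each $i<N$ there exists a neighborhood $V_i\subset S$ of $\xi_i$ on which $s\mapsto g_s(t'')$ is identically $g_{\xi_i}(t'')$ for every $t''\in T$ (finite intersection over $T$). A straightforward induction on $k<N$ then shows that whenever $\xi'_i\in V_i$ for all $i<N$, we have $g^{k+1}_{\xi'}(t)=g^{k+1}_\xi(t)$. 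Together with the continuity of the projection onto the $x$-coordinate, this means that on a neighborhood of $(\xi,(t,x))$ both the first $N$ coordinates of the $\Omega$-component of $\pi$ and the $x$-coordinate are constant, equal to those of $\pi(\xi,(t,x))$. Since $N$ is arbitrary and cylinders generate the product topology on $\Omega$, this gives continuity of $\pi$ at $(\xi,(t,x))$. Consequently, $(\nu\times\hat\mu)(C_\pi)\geq \nu(E^\mathbb{N})\,\hat\mu(\hat X)=1$, and Proposition~\ref{prop:basin} yields $\pi(B(\nu\times\hat\mu))\subset B(\bar\mu)$.

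The main obstacle is precisely this continuity step: it is the only place where the finiteness of $T$ is essential, and it depends on selecting a transparent representation so that the failures of continuity of each $g_\cdot(t)$ are confined to finitely many threshold points, which a $\nu$-typical sample sequence avoids at every coordinate. Once this is handled, the rest is a mechanical application of the abstract transfer Proposition~\ref{prop:basin} and the conjugacy furnished by Lemma~\ref{lem:matias}.
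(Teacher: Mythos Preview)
Your proof is correct and follows the same overall strategy as the paper's: reduce to Proposition~\ref{prop:basin} via the semiconjugacy of Lemma~\ref{lem:matias}, then verify $(\nu\times\hat\mu)(C_\pi)=1$ using the explicit step-function representation of $g$ from \cite{levin2017markov}. The difference lies only in how the continuity step is organized. The paper fixes $t$ and proves by induction on $n$ that the discontinuity set $D_n\subset S^n$ of the finite-coordinate map $G_n(s_0,\dots,s_{n-1})=g_{s_{n-1}}\circ\cdots\circ g_{s_0}(t)$ has Lebesgue measure zero, splitting $D_n$ into discontinuities inherited from $G_{n-1}$ and new ones arising in the last coordinate, and then covers the discontinuity set of $G$ by the cylinders $\widetilde D_n$. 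Your argument is more direct: since $T$ is finite, $E=\bigcap_{t\in T}(S\setminus D_t)$ is cofinite in $S$, and on $E^{\mathbb{N}}$ each single-step map $s\mapsto g_s(t'')$ is locally constant \emph{uniformly} in $t''\in T$, so the composed trajectory is locally constant and $\pi$ is continuous on $E^{\mathbb{N}}\times\hat X$. This yields a shorter proof that sidesteps the inductive decomposition of $D_n$; the price is that you use the finiteness of $T$ twice (once to make $E$ cofinite, once to intersect the neighborhoods $V_i$ over all states), whereas the paper's scheme uses it only to ensure $\rho(D(t))=0$. Both are valid, and your version is arguably cleaner.
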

\begin{proof} We apply Proposition~\ref{prop:basin} with $\tilde Y=\Sigma \times \hat X$, $Y=\Omega\times X$, $\Pi=\pi$, $g=F$ and $\tilde g = H$, where $\Sigma=S^\mathbb{N}$ with $S=[0,1]$, $\hat X=T\times  X$ and $\Omega=T^\mathbb{N}$. We also have that $\tilde \mu=\nu \times \hat \mu$ and $\mu=\bar \mu$, where $\nu=\rho^\mathbb{N}$ with $\rho$ the normalized Lebesgue measure on $S$. By considering in $T$ the discrete topology and in $S$ the restricted topology of the standard topology on $\mathbb{R}$, we have that $\tilde Y$ and $Y$ are both Polish spaces. Thus, we only need to show that the set $C_\pi$  of continuity points of $\pi$ has full $(\nu\times \hat \mu)$-measure.

Let us consider the set $D_\pi = (\Sigma\times \hat X) \setminus C_\pi$ of discontinuity points of $\pi$. 
Recall that 
\[
\pi\colon \Sigma\times \hat X \to \Omega\times X, \quad \pi\bigl(\xi,(t,x)\bigr) = \Bigl( (g^i_{\xi}(t))_{i\geq 1},\, x\Bigr).
\]
The only potential discontinuities of \(\pi\) arise from the first coordinate map
since the second coordinate map is continuous. Moreover, since $T$ has discrete topology, $(\xi,(t,x))$ is a continuity point of the first coordinate map of $\pi$ if and only if $\xi$ is a continuity point of 
$$ G\colon \Sigma \to \Omega, \quad  G((\xi_0,\xi_1,\dots)) = \bigl(g_{\xi_0}(t),\, g_{\xi_1}\circ g_{\xi_0}(t),\,\dots
\bigr).
$$ 
Thus, to get that $(\nu\times \hat\mu)(D_\pi)=0$, it suffices to prove that the set of discontinuity points of \(G\) is a \(\nu\)-null set.

For each \( n\ge 1 \), define the finite coordinate map
\[
G_{n}\colon S^n \to T, \quad G_{n}(s_0,\dots,s_{n-1}) = g_{s_{n-1}}\circ \cdots \circ g_{s_0}(t).
\]
We claim that the discontinuity set
\[
D_n := \bigl\{ (s_0,\dots,s_{n-1})\in S^n : G_{n} \text{ is discontinuous at } (s_0,\dots,s_{n-1})\bigr\}.
\]
has Lebesgue measure zero. We prove this claim by induction.

For \(n=1\), we have that the discontinuities of \(G_{1}\), occur exactly on the set 
$$
D(t):= \big\{s'\in S:  \text{$s'$ is a discontinuty of $s\in S\mapsto g_s(t) \in T$ } \big\}.
$$
According to~\cite[Proposition~1.5]{levin2017markov}, written  
$T=\{t_1,\dots,t_m\}$, $F_{t,0}=0$ and $F_{t,k}=\sum_{i=1}^k Q(t,t_i)$ for any $t\in T$ and $k=1,\dots,m$, 
one has that  
$g_s(t)=t_k$ for  $F_{t,k-1}< s \leq F_{t,k}$.
Hence, we have that 
\(
\rho(D(t))=0.
\)
Thus, \(D_1\) has Lebesgue measure zero.

Now we  assume that for some \( n-1\ge 1 \) the discontinuity set \(
D_{n-1} \subset S^{n-1}
\)
has Lebesgue measure zero. Write
\(
G_{n}(s_0,\dots,s_{n-1}) = g_{s_{n-1}}( G_{n-1}(s_0,\dots,s_{n-2}) ).
\)
A discontinuity of \( G_{n} \) at \( (s_0,\dots,s_{n-1}) \) may arise in one of two ways:
\begin{enumerate}
    \item \emph{Inherited discontinuity:} If \((s_0,\dots,s_{n-2})\in D_{n-1}\), then \(G_{n-1}\) is discontinuous at \((s_0,\dots,s_{n-2})\). Hence, regardless of the value of \(s_{n-1}\), the composition is discontinuous. Denote by
    \[
    A:=\{ (s_0,\dots,s_{n-1})\in S^n : (s_0,\dots,s_{n-2})\in D_{n-1}\}.
    \]
    By Fubini's theorem, since \(D_{n-1}\) is a null set in \(S^{n-1}\), the cylinder \(A\) has Lebesgue measure zero in \(S^n\).
    
    \item \emph{New discontinuity in the last coordinate:} Suppose that \((s_0,\dots,s_{n-2})\notin D_{n-1}\). Then, $(s_0,\dots,s_{n-1})$ is a discontinuity of $G_{n}$ precisely when  \( s_{n-1}\in D(G_{n-1}(s_0,\dots,s_{n-2}))\). That is, this point belongs to 
    \[
   \qquad \qquad B:=\Bigl\{ (s_0,\dots,s_{n-1})\in S^n : (s_0,\dots,s_{n-2})\notin D_{n-1} \text{ and } s_{n-1}\in D\bigl(G_{n-1}(s_0,\dots,s_{n-2})\bigr) \Bigr\}.
    \]
    Again, since $D(t)$ has null  Lebesgue measure in \(S\) for any $t\in T$, 
    by Fubini's theorem, \(B\) has Lebesgue measure zero.
\end{enumerate}
Thus, the full discontinuity set of \(G_{n}\) satisfies
\(
D_n \subset A \cup B,
\)
and therefore has zero Lebesgue measure. This completes the inductive proof.

\medskip
Now, observe that the map $G$ can be regarded as the limit (in the product topology) of the sequence of maps \(G_{n}\). Its discontinuities occur if and only if there exists some finite \(n\ge 1\) for which the restriction of \(G\) to the first \(n\) coordinates is discontinuous. In other words, if we define the cylinder sets
\[
\widetilde{D}_n := \Bigl\{ (s_0,s_1,\dots) \in \Sigma : (s_0,\dots,s_{n-1})\in D_n \Bigr\},
\]
then the set of discontinuity points of \(G\) is contained in
\[
D_\infty := \bigcup_{n\ge 1} \widetilde{D}_n.
\]
Since each \(\widetilde{D}_n\) has \(\nu\)-measure zero, it follows that \(D_\infty\) is a null set and thus the set of discontinuity points of \(G\) has zero \(\nu\)-measure. This completes the proof.
\end{proof}

\section{Mean constrictive iid representations}
\label{sec:mean-constrictive idd respresentations}

In this section, we prove Theorem~\ref{mainthmA} and Corollaries~\ref{maincorA} and~\ref{maincor:D}. To do this, we first obtain a more general result assuming that the annealed Perron--Frobenius operator associated with the iid representation of the Markovian random map is mean constrictive. In what follows, we introduce the necessary definition and framework to state and prove more general result stated in Theorem~\ref{thm:mainthm-MC-FPM}. 

\subsection{Mean constrictive}

Let $(X , \mathscr B,m)$ and $(T, \mathscr A,p)$ be {standard Borel probability spaces}, where $p$ is the unique stationary measure of a transition probability $Q(t,A)$ in $T\times \mathscr{A}$ and consider its associated Markov measure $\mathbb{P}$ on $(\Omega,\mathscr{F})=(T^\mathbb{N},\mathscr{A}^\mathbb{N})$. As usual, we denote $\hat X = T \times X$, $\hat m= p\times m$ and $\bar{m}=\mathbb{P}\times m$. 

Let  $f:T\times X\to X$ be a random map driven by the Markov measure $\mathbb{P}$ and consider its iid representation $h:S\times \hat X \to \hat X$   introduced in~\eqref{eq:idd-representation}. Recall that $(S,\rho)$ is the unit interval equipped with the Lebesgue measure, $(\Sigma,\nu)=(S^\mathbb{N},\rho^\mathbb{N})$ and $H:\Sigma \times \hat X \to \Sigma \times \hat X$ denotes the associated skew-product with the random map $h$.

\begin{lem} \label{lem:nonsingular-H} If $f$ is fibered $(p,m)$-nonsingular, then $H$ is $(\nu\times \hat m)$-nonsingular. 
\end{lem}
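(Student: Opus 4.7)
The plan is to reduce the statement to (the first implication of) Lemma~\ref{lem:nonsigular-relations} applied to the iid representation $h$, which is driven by the Bernoulli measure $\nu=\rho^{\mathbb{N}}$ on $\Sigma=S^{\mathbb{N}}$. Since that lemma produces $(\nu\times \hat m)$-nonsingularity of the associated skew-product $H$ from fibered $(\rho,\hat m)$-nonsingularity of $h$, the whole task reduces to verifying the latter property. The new ingredient compared to the Bernoulli case treated in Lemma~\ref{lem:nonsigular-relations} is that I must exploit the representation $Q(t,A)=\rho(\{s:g_s(t)\in A\})$ together with the $Q$-stationarity of $p$ in order to transfer the fibered nonsingularity of $f_t$ (in $t$) to fibered nonsingularity of $h_s$ (in $s$).

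Concretely, let $T_0\subset T$ be the full $p$-measure set on which $f_t$ is $m$-nonsingular, and fix $B\subset \hat X$ with $\hat m(B)=0$; by Fubini, $T_1:=\{t\in T:m(B_t)=0\}$, with $B_t:=\{x\in X:(t,x)\in B\}$, also satisfies $p(T_1)=1$. Using $h_s(t,x)=(g_s(t),f_{g_s(t)}(x))$ and Fubini,
\begin{equation*}
    \hat m(h_s^{-1}(B))=\int (f_{g_s(t)}m)(B_{g_s(t)})\,dp(t),
\end{equation*}
so the integrand vanishes whenever $g_s(t)\in T_0\cap T_1$.

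To control the set on which this holds, I would combine the representation $Q(t,A)=\rho(\{s:g_s(t)\in A\})$ with stationarity $\int Q(t,A)\,dp(t)=p(A)$, applied to $A=T\setminus(T_0\cap T_1)$, to get
\begin{equation*}
    \int\int 1_{T\setminus(T_0\cap T_1)}(g_s(t))\,dp(t)\,d\rho(s)=p(T\setminus(T_0\cap T_1))=0.
\end{equation*}
A further application of Fubini then gives that for $\rho$-a.e.~$s\in S$, $g_s(t)\in T_0\cap T_1$ for $p$-a.e.~$t\in T$, so $\hat m(h_s^{-1}(B))=0$ for $\rho$-a.e.~$s$; this is exactly fibered $(\rho,\hat m)$-nonsingularity of $h$. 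Invoking then the argument of Lemma~\ref{lem:nonsigular-relations} with $(T,p,\mathbb{P})$ replaced by $(S,\rho,\nu)$, whose only use of the Bernoulli hypothesis is shift-invariance of the noise (available here since $\nu=\rho^{\mathbb{N}}$), one concludes that $H$ is $(\nu\times \hat m)$-nonsingular.

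The only obstacle I anticipate is keeping the nested ``for a.e.''\ quantifiers over $s$ and $t$ correctly aligned; this is handled cleanly by the $Q$-stationarity of $p$, which is the single point where the hypothesis that $p$ be $Q$-stationary genuinely enters the proof and which replaces the direct use of $\sigma$-invariance of $\mathbb{P}=p^{\mathbb{N}}$ in the proof of Lemma~\ref{lem:nonsigular-relations}.
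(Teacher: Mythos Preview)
Your reduction strategy is sound, but there is a genuine gap in the identification step. What you actually prove is only $(\rho,\hat m)$-nonsingularity of $h$, not \emph{fibered} $(\rho,\hat m)$-nonsingularity: you fix $B$ first and obtain a $\rho$-full set of good $s$'s that depends on $B$ through $T_1$. Recall that fibered nonsingularity demands a single $\rho$-full set $S_0$ such that $h_s\hat m\ll\hat m$ for every $s\in S_0$ and every null $B$. In fact fibered nonsingularity of $h$ can fail outright: taking $B=A\times X$ with $p(A)=0$ shows that $h_s\hat m\ll\hat m$ forces $g_s p\ll p$, and for the standard representation $g_s(t)=s$ of a Bernoulli kernel on $T=[0,1]$ one has $g_s p=\delta_s\not\ll p$ for every $s$. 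So the sentence ``this is exactly fibered $(\rho,\hat m)$-nonsingularity of $h$'' is incorrect, and you cannot invoke Lemma~\ref{lem:nonsigular-relations} as stated.

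The fix is painless. The first implication of Lemma~\ref{lem:nonsigular-relations} in fact only needs the weaker $(\rho,\hat m)$-nonsingularity once one uses the Bernoulli structure $\nu=\rho\times\nu$: writing $\xi=(s_0,\xi')$ and computing
\[
(\nu\times\hat m)(H^{-1}A)=\int\!\!\int \hat m\bigl(h_{s_0}^{-1}(A_{\xi'})\bigr)\,d\rho(s_0)\,d\nu(\xi'),
\]
one has $\hat m(A_{\xi'})=0$ for $\nu$-a.e.\ $\xi'$, and then your argument gives $\hat m(h_{s_0}^{-1}(A_{\xi'}))=0$ for $\rho$-a.e.\ $s_0$ (depending on $\xi'$), so the inner integral vanishes. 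This is essentially what the paper does: it bypasses any intermediate claim about $h$ and computes $H(\nu\times\hat m)(A)$ directly, arriving via the same change of variables and $Q$-stationarity of $p$ at $\int f_t m(A_{\xi,t})\,dp(t)\,d\nu(\xi)=0$. Your use of stationarity is the right idea; only the labeling of what you have proved needs correction.
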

\begin{proof}  Let $A$ be a measurable subset of $\Sigma \times \hat X$  such that $(\nu \times \hat m) (A)=0$. For each $(\xi,t)\in \Sigma \times T$, denote   $A_{\xi,t}=\{x\in X: (\xi,(t,x))\in A\}$. Hence, since 
$$
0=(\nu \times \hat m)(A)= \int m(A_{\xi,t})\, dp(t) \,d\nu(\xi),
$$
there is a set $B_A \subset \Sigma \times T$ with $(\nu \times p)(B_A)=1$ such that $m(A_{\xi,t})=0$ for every $(\xi,t)\in B_A$.  Moreover, since $f$ is fibered $(p,m)$-nonsingular, there is $T_0 \subset T$ with $p(T_0)=1$ such that $f_tm \ll m$. Thus, $N=B_A\cap (\Sigma\times T_0)$ has $(\nu\times p)(N)=1$ and $f_tm(A_{\xi,t})=0$ for every $(\xi,t)\in N$. 

On the other hand, $(\zeta,(r,y))\in H^{-1}(A)$ if and only if 
$$
(\sigma(\zeta),(g^{}_{\zeta_0}(r),f_{g^{}_{\zeta_0}(r)}(y)))=
(\sigma(\zeta),h_{\zeta_0}(r,y))=H(\zeta,(r,y))\in A.
$$
This is equivalent to 
$$
f_{g^{}_{\zeta_0}(r)}(y))\in A_{\sigma(\zeta),g^{}_{\zeta_0}(r)}.
$$
Then, using that $\nu=\rho^\mathbb{N}$, and the transition probability $Q(r,\cdot)=\rho(\{s\in S: g_s(r)\in \cdot \})=g^r\rho$, where $g^r=g(\cdot,r)$, we get that
\begin{align*}
    H(\nu\times \hat m)(A) &= (\nu\times \hat m)(H^{-1}(A))=\int m((f_{g^{}_{\zeta_0}(r)})^{-1}( A_{\sigma(\zeta),g^{}_{\zeta_0}(r)})) \, dp(r)\, d\nu(\zeta) \\
    &=\int f_{g^{}_{\zeta_0}(r)}m( A_{\xi,g^{}_{\zeta_0}(r)}) \, dp(r)\, d\rho(\zeta_0) \, d\nu(\xi)  =\int f_{t}m( A_{\xi,t}) \,  dg^r\rho(t) \,dp(r)\, d\nu(\xi) \\
    &= \int f_{t}m( A_{\xi,t}) \,  Q(r,dt) \,dp(r)\, d\nu(\xi) = \int f_{t}m( A_{\xi,t}) \,  dp(t)\, d\nu(\xi).
\end{align*}
The last equality follows because $p$ is a $Q$-stationary measure, that is, $p=\int Q(r,\cdot)\, dp(r)$, and thus $dp(t)=Q(r,dt)\,dp(r)$. Since 
$$
\int f_{t}m( A_{\xi,t}) \,  dp(t)\, d\nu(\xi) = \int_{N} f_{t}m( A_{\xi,t}) \,  dp(t)\, d\nu(\xi)=0,
$$
we conclude that $H(\nu\times \hat m)(A)=0$. This shows that $H$ is $(\nu\times \hat m)$-nonsingular.  
\end{proof}

From Lemmas~\ref{lem:nonsingular-H},~\ref{lem:nonsigular-relations}, \ref{lem:f-nonsigular-P} and Remark~\ref{rem:def-Perron},  under the assumption that $f$ is fibered $(p,m)$-nonsingular, the annealed Perron--Frobenius operator $\mathcal{L}_h:L^1(\hat m) \to L^1(\hat m)$ is well-defined.

\begin{rem} 
A similar argument shows that if $F$ is $\bar m$-nonsingular, then $h$ is $(\rho,\hat m)$-nonsingular. Then, the annealed Perron--Frobenius operator $\mathcal L_h$ associated with $h$ is also well-defined. However, this is not enough to apply Theorem~\ref{prop:iid}, which requires that $H$ is $(\nu\times \hat m)$-nonsingular.   
\end{rem}

\begin{thm} \label{thm:mainthm-MC-FPM}
If $f$ is fibered $(p,m)$-nonsingular and  
$\mathcal L_h$ is {\tt (MC)}, then the associated skew-product $F$ satisfies {\tt (FPM)} with respect to $\bar m$.
\end{thm}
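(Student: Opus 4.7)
The strategy is to transfer {\tt (FPM)} from the iid representation $h$ (where the Bernoulli setting of Theorem~\ref{thm:BNNT} applies) to the original skew-product $F$ through the factor map $\pi$ of Lemma~\ref{lem:matias}. First, since $f$ is fibered $(p,m)$-nonsingular, Lemma~\ref{lem:nonsingular-H} gives that $H$ is $(\nu\times \hat m)$-nonsingular, which is exactly the nonsingularity hypothesis needed to invoke Theorem~\ref{thm:BNNT} for the Bernoulli random map $h$. Together with the hypothesis that $\mathcal{L}_h$ is {\tt (MC)}, Theorem~\ref{thm:BNNT} yields that $H$ satisfies {\tt (FPM)} with respect to $\nu\times \hat m$. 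Since the driving measure $\nu=\rho^{\mathbb{N}}$ is Bernoulli, Lemma~\ref{lem:Markov-Bernoulli} forces the ergodic $H$-invariant measures supplied by {\tt (FPM)} to be of product form $\nu\times \hat\mu_i$, for finitely many distinct ergodic $h$-stationary probability measures $\hat\mu_1,\dots,\hat\mu_r$ on $\hat X$ with $\hat\mu_i\ll \hat m$.

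Next, I would define the candidates $\bar\mu_i := \pi(\nu\times \hat\mu_i) = \Xi(\hat\mu_i)$ in the notation of Remark~\ref{eq:mu-hat-bar}. By Theorem~\ref{thm:Matias}, each $\bar\mu_i$ is an $F$-invariant $\mathbb{P}$-Markovian probability measure on $\Omega\times X$, settling item~1) of Definition~\ref{dfn:11}. Proposition~\ref{prop:one-to-one} (bijectivity of $\Xi$ on $\mathcal S(\hat P)$) ensures that the $\bar\mu_i$ remain pairwise distinct, and Lemma~\ref{claim:varphi-basin} ensures they are ergodic. For item~2), the absolute continuity $\nu\times \hat\mu_i\ll \nu\times \hat m$ is preserved under pushforward, and since Lemma~\ref{lem:matias} gives $\pi(\nu\times \hat m)=\bar m$, we conclude $\bar\mu_i\ll \bar m$.

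For item~3), I would fix any bounded measurable $\varphi:\Omega\times X\to\mathbb{R}$ and set $\psi=\varphi\circ \pi$. Applying Lemma~\ref{claim:varphi-basin} with $\tilde g=H$, $g=F$, $\Pi=\pi$ and $\tilde\mu=\nu\times \hat\mu_i$ gives the inclusions
\[
B(\nu\times \hat\mu_i,\psi)\subset \pi^{-1}\bigl(B(\bar\mu_i,\varphi)\bigr),\qquad i=1,\dots,r.
\]
Taking unions and using that $(\nu\times \hat m)\bigl(\bigcup_{i} B(\nu\times \hat\mu_i,\psi)\bigr)=1$ (by {\tt (FPM)} for $H$), and then applying $\pi(\nu\times \hat m)=\bar m$, we obtain
\[
\bar m\Bigl(\bigcup_{i=1}^{r} B(\bar\mu_i,\varphi)\Bigr) \;\geq\; (\nu\times \hat m)\Bigl(\pi^{-1}\Bigl(\bigcup_{i=1}^{r} B(\bar\mu_i,\varphi)\Bigr)\Bigr) \;=\; 1,
\]
which is item~3). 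Essentially all of the hard analysis is already packaged into Theorem~\ref{thm:BNNT} and the iid-representation machinery of Section~\ref{sec:iid-representation}; the only point requiring vigilance is that the pushforward through $\pi$ must not collapse ergodic components or destroy absolute continuity, and these are handled respectively by Proposition~\ref{prop:one-to-one} and by the identity $\pi(\nu\times \hat m)=\bar m$ in Lemma~\ref{lem:matias}.
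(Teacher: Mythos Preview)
Your proposal is correct and follows essentially the same route as the paper's proof: obtain {\tt (FPM)} for $H$ via Theorem~\ref{thm:BNNT} (using Lemma~\ref{lem:nonsingular-H} for the required nonsingularity), then push forward through $\pi$, using Lemma~\ref{lem:matias} for $\pi(\nu\times \hat m)=\bar m$, Theorem~\ref{thm:Matias} for the Markovian structure and ergodicity of the $\bar\mu_i$, and Lemma~\ref{claim:varphi-basin} for the basin inclusions. The only cosmetic differences are that you invoke Proposition~\ref{prop:one-to-one} to guarantee distinctness of the $\bar\mu_i$ (the paper does not bother, since {\tt (FPM)} is unaffected by coincidences) and that you cite Lemma~\ref{claim:varphi-basin} rather than Theorem~\ref{thm:Matias} for ergodicity; both are valid.
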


\begin{proof} 
According to Theorems~\ref{thm:BNNT} and \ref{prop:iid}, since $\mathcal L_h$ is mean constrictive, then we have $H$ satisfies $\mathtt{(FPM)}$ with respect to $\nu\times \hat m$. That is, there exist finitely many ergodic $H$-invariant probability measures $\nu \times \hat\mu_1,\dots, \nu\times \hat\mu_r$ on $\Sigma\times \hat X$  such that they are absolutely continuous with respect to $\nu\times\hat m$ and have pairwise disjoint supports (up to a $(\nu \times \hat m)$-null set).  Furthermore,
$$(\nu  \times \hat m)\big(B(\nu\times \hat\mu_1,\psi) \cup \dots \cup B(\nu \times \hat \mu_r,\psi)\big)=1$$
for any measurable  bounded function $\psi:\Sigma\times \hat X \to \mathbb{R}$, where
\[
B(\nu\times \hat\mu_i,\psi)=\left\{
(\xi,(t,x))\in \Sigma \times \hat X : \lim_{n\to\infty}\frac{1}{n}\sum_{j=0}^{n-1}\psi\big(H^j(\xi,(t,x))\big)=\int\psi\, d(\nu\times\hat\mu_i)
\right\}.
\]

Let $\pi: \Sigma \times \hat X \to \Omega \times X$ be the measurable map given in Lemma~\ref{lem:matias}. Since $\nu \times \hat \mu_i$ is an ergodic $H$-invariant probability measure on $\Sigma \times \hat X$, we get that $\hat \mu_i$ is an ergodic $h$-stationary measure on $\hat X$. Then, according to Theorem~\ref{thm:Matias} and Remark~\ref{eq:mu-hat-bar}, we have
\begin{enumerate}[leftmargin=0.5cm]
    \item[1)]  
 $\bar \mu_i  = \pi(\nu\times \hat\mu_i)$ is an   ergodic $F$-invariant $\mathbb{P}$-Markovian measure for every $i=1,\dots,r$;
\item[2)] Absolutely continuity: 
Consider a set $B \subset \Omega \times X$ with $(\mathbb{P}\times m)({B})=0$.  
By Lemma~\ref{lem:matias}, 
$$
(\nu \times \hat{m})(\pi^{-1}(B))=\pi(\nu \times \hat{m})(B)=(\mathbb{P}\times m)(B)=0.
$$
Hence, it follows from $(\nu\times\hat\mu_i)\ll \nu \times \hat m$ that 
$0=(\nu\times\hat\mu_i)(\pi^{-1}(B))=\pi(\nu\times\hat\mu_i)(B)=\bar{\mu}_i(B)$. 
This shows that $\bar{\mu}_i\ll \bar{m}=\mathbb{P}\times m$.

\item[3)] Union of the basin of attraction: Let $\psi:\Omega \times X \to \mathbb{R}$ be a measurable bounded function. 

Now, by Lemma~\ref{claim:varphi-basin}, we get
\[
 1=(\nu \times \hat m) \left(\bigcup _{i=1}^r B(\nu \times \hat\mu_i, \psi\circ\pi)\right) \leq (\nu \times \hat m) \left(\bigcup _{i=1}^r \pi^{-1}(B( \bar\mu_i, \psi))\right) = 
(\mathbb{P}\times m)\left(\bigcup _{i=1}^r B (\bar \mu _i, \psi)\right).
\]
This implies that $\bar m( B(\bar \mu _1, \psi) \cup \dots \cup  B(\bar \mu _r, \psi))=1$ as desired. \qedhere
\end{enumerate}
\end{proof}

\subsection{Proof of Theorem~\ref{mainthmA}}  By Lemma~\ref{lem:nonsingular-H}, since $f$ is $(p,m)$-nonsingular, the skew-product $H$ associated with the iid representation $h$ of $f$ is $(\nu\times \hat m)$-nonsingular. Hence, according to Lemmas~\ref{lem:nonsigular-relations} and~\ref{lem:f-nonsigular-P} and Remark~\ref{rem:def-Perron}, $\mathcal{L}_h:L^1(\hat m) \to L^1(\hat m)$ is a well-defined Markov operator and its dual operator $\mathcal{L}^*_h:L^\infty(\hat m) \to L^\infty(\hat m)$ coincides with the operator $\hat{P}$ introduced in the statement of Theorem~\ref{mainthmA}. Moreover, 
$$\mathcal{L}_h^*1_B(t,x)=\hat P((t,x),B), \quad \text{for every measurable set $B$ in $\hat X$ and $\hat m$-a.e.~$(t,x)\in \hat X$.}$$
Since by assumption, both $T$ and $X$ (and thus $\hat X)$ are compact Polish spaces and $\hat P^{n_0}((t,x),B)$ is also strong Feller, we get from~\cite[Theorems~B and~2.3]{BNNT22} that $\mathcal{L}^{2n_0}_h$ (and thus also $\mathcal{L}^{2n_0*}_h=\hat P^{2n_0}$) is a compact operator.  
In particular, $\mathcal{L}_h$ is quasi-compact and by~\cite[Proposition 6.1 and Figure~1]{BNNT22} is {\tt (MC)}. Consequently, by Theorem~\ref{thm:mainthm-MC-FPM}, we also get that $F$ satisfies {\tt (FPM)} with respect to $\bar m= \mathbb{P}\times m$. This concludes the proof of the theorem.    

\subsection{Proof of Corollary~\ref{maincorA}} \label{ss:proof-corollary-D}

 First, we will introduce the following clean definition of the notion of physical noise: 

\begin{defi} \label{def-physical-noise}  A Bernoulli probability $\mathbb{P}=p^\mathbb{N}$ on $\Omega=T^\mathbb{N}$ (or the initial distribution $p$ on $T$) is said to be \emph{physical noise} for a random map $f:T\times X \to X$ driven by $\mathbb{P}$  and with reference measure $m$ on $X$ and transition probability $P(x,A)$ 
if one of the following equivalent statements holds: 
\begin{enumerate}[label=(\roman*)]
    \item  there exists $n_0\geq 1$ such that 
    $P^{n_0}(x,A)$ is Feller continuous and $P^{n_0}(x,\cdot)\ll m$ for every $x\in X$;
\item  there exists $n_0\geq 1$ such that 
$P^{n_0}(x,dy)=q(x,y) \, dm(y)$ with $x\mapsto q(x,\cdot) \in L^1(m)$ continuous. 
\end{enumerate}
\end{defi}

The equivalence between (i)--(ii) was proved in~\cite{BNNT22}. Namely, according to \cite[Theorem~2.6 and Remark~2.1]{BNNT22}, (i) implies that $P^{2n_0}(x,A)$ is ultra Feller continuous and $P^{2n_0}(x,\cdot)\ll m$ for every $x\in X$.  This is equivalent to (ii), cf.~\cite[Proof of Proposition~2.2]{BNNT22}. Moreover, we also have the following:

\begin{rem}  If $m$ is fully supported on $X$, according to \cite[Theorem~2.6 and Proposition~2.2]{BNNT22}, physical noise is also equivalent to
\begin{enumerate}
    \item[(iii)] there exists $n_0\geq 1$ such that 
$P^{n_0}(x,A)$ is $m$-nonsingular and strong Feller.
\end{enumerate}
    
\end{rem}

Now we will complete the proof of Corollary~\ref {maincorA} as follows.  Let $(X,\mathcal{B},m)$, $(T,\mathscr{A},p)$ and $f:T\times X \to X$ be as in the statement of Corollary~\ref{maincorA}. Now we consider the iid representation $h:S \times \hat{X} \to \hat{X}$ of the random map $f$ given in~\eqref{eq:idd-representation}. Recall that here $S=[0,1]$ endowed with the normalized Lebesgue measure $\rho$. As observed in~\eqref{eq:P-h}, $\hat P((t,x),B)$ is the transition probability associated with $h$. Since, by assumption, $\hat P^{n_0}((t,x),B)$   is Feller continuous and $\hat P((t,x),\cdot)\ll p\times m$ for all $(t,x)\in \hat T\times X=\hat{X}$, we see that $\nu=\rho^\mathbb{N}$ is a physical noise for $h$.  
 Hence, it follows from~\cite[Proposition~2.5 and Theorem~2.6]{BNNT22} that $\hat{P}^{n_0}((t,x),B)$ is strong Feller continuous. Therefore, we get Corollary~\ref{maincorA} from Theorem~\ref{mainthmA}.  

The following remark provides reasonable sufficient conditions to get the required Feller continuity in Corollary~\ref{maincorA}.

\begin{rem} As a consequence of the results of Blumenthal and Corson~\cite{BC70,BC72}, if $Q(t,A)$ is Feller continuous and either
\begin{enumerate}
    \item $T$ totally disconnected or
    \item $T$ connected, locally connected and $Q(t,\cdot)$ has full support for each $t\in T$,  
\end{enumerate}
then $Q(t,A)$ can be represented as the transition probability associated with continuous maps. That is,
there exists an auxiliary parametric space $(S,\mathscr{C},\rho)$  and a random map $g:S\times T \to T$, where $g_s=g(s,\cdot)$ is a continuous map for every $s\in S$ such that 
 $$
 Q(t,A)=\rho(\{s\in S: \, g_s(t)\in A\}), \quad \text{for every} \ t\in T, \ \ A\in \mathscr{A}.
 $$
See also~\cite[Section~1.1]{kifer-ergodic}. If in addition  $f:T\times X \to X$ is also a continuous map, we get that $h_s: \hat X \to \hat X$ given by $h_s(t,x)=(g_s(t),f_{g_s(t)}(x))$ is also a continuous map for every $s\in S$. In particular, $h:S\times \hat X \to \hat X$ is a continuous random map. Thus, under the above assumptions, we get that $\hat{P}((t,x),B)$ is Feller continuous. 
\end{rem}

\subsection{Proof of Corollary~\ref{maincor:D}}

We first give a general sufficient condition for Corollary~\ref{maincorA} to hold
and then apply it to random maps with additive noise.

\begin{thm}\label{thm:thm proof cor D}
Let $X = T$ and $S$ be compact Riemannian manifolds equipped with their normalized Lebesgue measure $m=p$ and $\rho$ respectively. 
Consider  $C^1$ maps $f: T \times X \to X$  and $g: S \times T \to T$ and a transition probability $Q(t,A)$ given by $Q(t,A)=\rho (\{ s\in S : g_s(t)\in A\})$. 
Assume that $p$ is the unique $Q$-stationary probability measure. Also we require that 
there exists $n_0\geq 1$ such that 
$$
 H_{(t,x)}: S^{n_0} \to T\times X,  \quad H_{(t,x)}(s_1,\dots,s_{n_0})=h_{s_{n_0}}\circ \dots \circ h_{s_1}(t,x)
$$
is a submersion, where 
$$h_s(t,x)=(g_s(t),f_{g_s(t)}(x)), \quad (s,t,x)\in S\times T \times X.$$
Then, the transition probability $\hat P((t,x),B)$ is Feller continuous and $P^{n_0}((t,x),B)\ll p\times m$ for every $(t,x)\in T\times X$. Therefore, the conclusions of Theorem~\ref{mainthmA} hold. 
\end{thm}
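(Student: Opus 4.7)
\textit{Proof proposal.} The plan is to reduce to Corollary~\ref{maincorA}: it suffices to verify (a) that $\hat{P}$ is Feller continuous (so that every iterate $\hat{P}^{n}$ is too) and (b) that $\hat{P}^{n_0}((t,x),\cdot)\ll p\times m$ for every $(t,x)\in T\times X$. Property (a) is the soft part. Since $f$ and $g$ are $C^1$, the iid representation
\[
h_s(t,x) = (g_s(t),\, f_{g_s(t)}(x))
\]
is continuous in $(t,x)$ for each fixed $s\in S$. By~\eqref{eq:P-h}, for any bounded continuous $\varphi:T\times X\to\mathbb{R}$,
\[
\hat{P}\varphi(t,x) = \int_S \varphi(h_s(t,x))\,d\rho(s),
\]
so the dominated convergence theorem gives continuity of $\hat{P}\varphi$. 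Iterating, $\hat{P}^{n_0}$ is also Feller continuous.

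For (b), iterating \eqref{eq:P-h} $n_0$ times expresses the kernel as the pushforward of Lebesgue measure on $S^{n_0}$ by the map $H_{(t,x)}$:
\[
\hat{P}^{n_0}((t,x),B) = \int_{S^{n_0}} 1_B\bigl(H_{(t,x)}(s_1,\dots,s_{n_0})\bigr)\, d\rho^{n_0} = (H_{(t,x)})_*\rho^{n_0}(B).
\]
Thus (b) reduces to the statement that a $C^1$ submersion between compact Riemannian manifolds pushes the (smooth) Lebesgue measure of the source forward to a measure absolutely continuous with respect to the Lebesgue measure on the target. This is classical: by the submersion theorem, each point of $S^{n_0}$ admits an open neighborhood on which $H_{(t,x)}$ is, in suitable $C^1$ coordinates, the linear projection onto $T\times X$, and a projection sends Lebesgue measure to an absolutely continuous measure by Fubini's theorem. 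Covering $S^{n_0}$ by countably many such charts (via $\sigma$-compactness) and summing the absolutely continuous pushforwards yields (b) globally.

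With (a) and (b) in hand, Corollary~\ref{maincorA} applies directly and yields both items in the conclusion of Theorem~\ref{mainthmA}: (FPM) for $F$ with respect to $\mathbb{P}\times m$, and eventual compactness ($\hat P^{2n_0}$ compact) of the operator $\hat{P}$ on $L^\infty(p\times m)$. The only non-formal step is the submersion-implies-absolutely-continuous-pushforward argument; all remaining steps are bookkeeping, so I do not anticipate a serious obstacle beyond carefully invoking the local normal form for submersions.
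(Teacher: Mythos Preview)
Your proposal is correct and follows essentially the same route as the paper: verify Feller continuity of $\hat P$ from continuity of $h$, show $\hat P^{n_0}((t,x),\cdot)=(H_{(t,x)})_*\rho^{n_0}\ll p\times m$ from the submersion hypothesis, and then invoke Corollary~\ref{maincorA}. The only difference is that the paper outsources the ``submersion $\Rightarrow$ absolutely continuous pushforward'' step to \cite[Proposition~3.2]{benaim2024invariant}, whereas you sketch it directly via the local normal form and Fubini; your argument is slightly more self-contained but otherwise identical in spirit.
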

 
\begin{proof}
Since $Q(t,A)$ is the transition probability associated with the Bernouilli random map $g:S\times T \to T$, we have, similarly to before, that $\hat P((t,x),B)$ is the transition probability associated with the random map $h: S \times \hat X \to \hat X$ given by $h_s(t,x)=(g_s(t),f_{g_{s}(t)}(x))$, $(t,x)\in T\times X=\hat X$, $s\in S$. Also, since $f$ and $g$ are $C^1$, we get that $h$ is continuous and then, $\hat P((t,x),B)$ is Feller continuous. Moreover, by~\cite[Proposition 3.2]{benaim2024invariant},  since for some $n_0\geq 1$ the function $H_{(t,x)}:S^{n_0} \to \hat X$ is a submersion for every $(t,x)\in \hat X$, 
then $\hat{P}^2((t,x), \cdot)$ is absolutely continuous with respect to $\hat m=p\times m$ 
for all $(t,x) \in \hat X$. Then, we can apply Corollary~\ref{maincorA} and obtain the desired result.
\end{proof}

\begin{proof}[Proof of Corollary~\ref{maincor:D}]
We show that the example given by $g(s,t)=s+t$ and $f(t,x)=f_0(x)+t$ with $S=T$ and $p=\rho$, where $f_0$ is a diffeomorphism, satisfies all the conditions of Theorem~\ref{thm:thm proof cor D}. 

First, since the normalized Lebesgue measure (the Haar measure) $\rho$ is the unique $g$-stationary measure on $T$, we have that the transition probability $Q(t,A)$ has $p=\rho$ as the unique $Q$-stationary measure. On the other hand, 
$$
H_{(t,x)}(s_1,s_2)=(s_1+s_2+t, f_0(f_0(x)+s_1+t)+s_1+s_2+t).
$$
Then, the determinant of 
\[ DH_{(t,x)}(s_1,s_2)= \begin{pmatrix} 1 & 1 \\ Df_0(f_0(x)+s_1+t)+1 & 1 \end{pmatrix}\]
 is different from zero, and $f_0$ is a diffeomorphism. Thus, we see that $H_{(t,x)}$ is a submersion for all $(t,x)\in \hat X$, concluding that the example satisfies all required assumptions.  
\end{proof}

\section{Mostly contracting Markovian random maps: Proof of Theorem~\ref{cor:Markovian-qc}} \label{sec:Markov}

Let $(X,d)$ denote a compact metric space and consider  $(\Omega,\mathscr{F})= (T^\mathbb{N},\mathscr{A}^\mathbb{N})$, where $(T,\mathscr{A})$ is a measurable space. Let $\mathbb{P}$ be an ergodic shift-invariant Markov measure on~$\Omega$. As usual, denote $\hat X = T \times X$.

Let $f:T\times X \to X$ be a random map driven by the Markov measure $\mathbb{P}$ and consider its iid representation  $h:S\times \hat X \to \hat X$ introduced in~\eqref{eq:idd-representation}, where $(S,\rho)$ is the unit interval equipped with the Lebesgue measure and $(\Sigma,\nu)=(S^\mathbb{N},\rho^\mathbb{N})$. Now, we will proceed to analyze the maximal Lyapunov exponent of $h$ and compare it with that of $f$. First, we need to study the local and global Lipschitz constants of $h_s$.  To do this, we endow $T$ with the discrete metric $\delta$, i.e.~$\delta(t,t')=1$ if $t=t'$ and $0$ otherwise, and define a metric $\hat{d}$ on $\hat X$ as
$$\hat{d}((t,x),(t,x'))= \delta(t,t')+d(x,x').$$ 

\begin{lem} \label{lem:Mark-lip-iid}
    If $f$ is a Lipschitz random map such that $\sup_{t\in T} \mathrm{Lip}(f_t)<\infty$,  then $h$ is a Lipschitz random map. Moreover, for any $n\geq 1$,
    \begin{equation} \label{eq:cont-lip}
Lh^n_\xi(t,x)=Lf^n_\omega(x)  \quad   \text{where \   $\xi \in \Sigma$,  $(t,x)\in\hat X$ and $\omega=(g^k_\xi(t))_{k\geq 1} \in \Omega$}.
\end{equation}
and
\begin{equation} \label{eq:lip}
\mathrm{Lip}(h_s)\leq M\eqdef \max\big\{ \sup_{t\in T} \mathrm{Lip}(f_t),1+\diam(X), 2\big\}<\infty \quad \text{for all $s \in S$}.
\end{equation}
\end{lem}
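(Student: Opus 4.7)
The plan rests on one crucial geometric observation: because $T$ carries the discrete metric $\delta$, the product metric $\hat d$ is degenerate at small scales. Concretely, for any radius $r\in(0,1)$ and any $(t,x)\in\hat X$, the open ball $B_{\hat d}((t,x),r)$ equals $\{t\}\times B_d(x,r)$, since any point with a different first coordinate is already at $\hat d$-distance at least $1$. This reduces all questions about local Lipschitz constants on $\hat X$ to questions on a single fiber $\{t\}\times X$, where the first coordinate of $h^n_\xi$ is constant and only the action of $f$ on $X$ matters.

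For the identity $Lh^n_\xi(t,x)=Lf^n_\omega(x)$, I would first establish by a short induction the explicit product formula
\[
  h^n_\xi(t,x) = \bigl(g^n_\xi(t),\, f^n_\omega(x)\bigr),\qquad \omega=(g^k_\xi(t))_{k\geq 1},
\]
which is immediate from the definition $h_s(t,x)=(g_s(t),f_{g_s(t)}(x))$ together with the indexing convention in Lemma~\ref{lem:matias}. Then, restricting to $B_{\hat d}((t,x),r)=\{t\}\times B_d(x,r)$ with $r<1$, both images $h^n_\xi(t,y)$ and $h^n_\xi(t,y')$ share the common first coordinate $g^n_\xi(t)$, so $\hat d(h^n_\xi(t,y),h^n_\xi(t,y'))=d(f^n_\omega(y),f^n_\omega(y'))$ and $\hat d((t,y),(t,y'))=d(y,y')$. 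Taking the supremum over $y\neq y'$ in $B_d(x,r)$ yields $L_r h^n_\xi(t,x)=L_r f^n_\omega(x)$, and letting $r\to 0^+$ gives \eqref{eq:cont-lip}.

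For the uniform bound \eqref{eq:lip}, I would split into two cases. If $t=t'$, then $g_s(t)=g_s(t')$ and
\[
  \hat d(h_s(t,x),h_s(t,x')) = d(f_{g_s(t)}(x),f_{g_s(t)}(x')) \leq \mathrm{Lip}(f_{g_s(t)})\,d(x,x'),
\]
which bounds the Lipschitz ratio by $\sup_{t\in T}\mathrm{Lip}(f_t)$. If $t\neq t'$, then $\hat d((t,x),(t',x'))\geq 1$ while
\[
  \hat d(h_s(t,x),h_s(t',x'))\leq \delta(g_s(t),g_s(t'))+\mathrm{diam}(X) \leq 1+\mathrm{diam}(X),
\]
so the ratio is bounded by $1+\mathrm{diam}(X)$. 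Combining the two cases (and enlarging by $2$ for later convenience) gives $\mathrm{Lip}(h_s)\leq M$, uniformly in $s\in S$; in particular $h$ is a Lipschitz random map.

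There is no deep obstacle: the proof is a matter of carefully unwinding definitions. The only delicate point is aligning the indexing conventions so that $\omega=(g^k_\xi(t))_{k\geq 1}$ pairs correctly with the iterates $f^n_\omega=f_{\omega_{n-1}}\circ\cdots\circ f_{\omega_0}$ in the inductive verification of the product formula for $h^n_\xi$; once this is set up, the discreteness of $T$ does all the work.
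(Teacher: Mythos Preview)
Your proposal is correct and follows essentially the same approach as the paper: both exploit that small $\hat d$-balls sit in a single $T$-fiber to reduce local Lipschitz constants on $\hat X$ to those on $X$, and both handle the global Lipschitz bound by a case split according to whether the first coordinates agree. Your split on $t=t'$ versus $t\neq t'$ is slightly cleaner than the paper's split on $r<1$ versus $r\geq1$ (which introduces an extra subcase and the constant $2$), but the ideas are the same.
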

\begin{proof}
 We have that
$\hat{d}((t,x),(t',x'))<1$ if and only if $t=t'$ and $d(x,x')<1$. In particular, for any $(t,x)\in \hat X$ and $0<r<1$, if
$(t',x'),(t'',x'')\in B((t,x),r)$, 
$$d((t',x'),(t'',x''))=d(x',x'') \quad \text{and} \quad d(h_s(t',x'),h_s(t'',x''))=d(f_{g_s(t)}(x'),f_{g_s(t)}(x'))$$
for all $s\in S$. Thus, $$\mathrm{Lip}(h_s|_{B((t,x),r)}) =\mathrm{Lip}(f_{g_s(t)}|_{B(x,r)}) \quad \text{for $0<r<1$}$$ 
and consequently, $Lh_s(t,x)=Lf_{g_s(t)}(x)$ for all  $s\in S$ and $(t,x)\in \hat  X$. Moreover, by a similar argument, since for any $n\geq 1$,  
$$
 h^n_\xi(t,x)=(g^n_\xi(t),f_\omega^n(x)), \quad \text{where  $\xi \in S^\mathbb{N}$,  $(t,x)\in \hat X$ and $\omega=(g^k_\xi(t))_{k\geq 1} \in \Omega$,} 
$$
we also get that~\eqref{eq:cont-lip}. 
Moreover, by a direct calculation, 
setting $r=\hat{d}((t,x),(t',x'))$ and $L=\sup \{\mathrm{Lip}(f_t): t\in T\}$, we have that either,
$$
\hat{d}(h_s(t,x),h_s(t',x')) \leq  L \, d(x,x') = L \cdot 
r \quad \text{if $r<1$}
$$
 or
$$
\hat{d}(h_s(t,x),h_s(t',x')) \leq (1+\diam(X)) \cdot r \quad \text{if $r\geq 1$ and $d(f_{g_s(t)}(x),f_{g_s(t')}(x'))\geq 1$}
$$
or
$$
\hat{d}(h_s(t,x),h_s(t',x')) \leq  2 r \quad \text{if $r\geq 1$ and $d(f_{g_s(t)}(x),f_{g_s(t')}(x'))< 1$.}
$$
This shows~\eqref{eq:lip} and thus $h_s$ is Lipschitz for all $s\in S$. The proof of the lemma is completed. 
\end{proof}

We are now ready to compare the maximal Lyapunov exponent. First, recall that $\pi:\Sigma \times \hat X \to \Omega \times X$ denotes the map given in Lemma~\ref{lem:matias} that conjugates the skew-products $F$ and $H$ associated with $f$ and $h$, respectively. 

\begin{prop} \label{prop:Lyapunov}
If $\hat{\mu}$ is an $h$-stationary probability measure, then 
$$
\lambda(\hat{\mu})\eqdef\lambda(\nu\times\hat{\mu})=\lambda(\bar{\mu}), \quad  \text{where \ $\bar{\mu}=\pi(\nu\times\hat\mu)$}.
$$
\end{prop}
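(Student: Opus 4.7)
The plan is straightforward and rests on two ingredients already established in the paper: the identity for the local Lipschitz constants in Lemma~\ref{lem:Mark-lip-iid}, and the conjugacy $\pi\circ H = F\circ\pi$ from Lemma~\ref{lem:matias} together with the fact that $\pi(\nu\times\hat{\mu})=\bar{\mu}$.

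First, I would verify that $\lambda(\nu\times\hat{\mu})$ is well-defined. Kingman's subadditive ergodic theorem applied to the cocycle $(\xi,(t,x))\mapsto \log Lh^n_\xi(t,x)$ over $H$ requires the integrability $\int \log^+\mathrm{Lip}(h_{\xi_0})\,d\nu<\infty$. Since we are in the setting where $f$ is a Lipschitz random map and (in the applications of interest, notably Theorem~\ref{cor:Markovian-qc}) $T$ is finite, the bound $\mathrm{Lip}(h_s)\leq M$ provided by~\eqref{eq:lip} of Lemma~\ref{lem:Mark-lip-iid} gives this integrability at once; in fact, $\log^+\mathrm{Lip}(h_{\xi_0})$ is bounded. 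The integrability $\int \log^+\mathrm{Lip}(f_{\omega_0})\,d\mathbb{P}<\infty$ ensuring that $\lambda(\bar{\mu})$ is defined follows similarly (or is a standing hypothesis).

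The heart of the argument is a clean change of variables. By~\eqref{eq:cont-lip} of Lemma~\ref{lem:Mark-lip-iid}, for every $n\geq 1$ and every $(\xi,(t,x))\in\Sigma\times\hat{X}$,
\[
\log Lh^n_\xi(t,x) \;=\; \log Lf^n_\omega(x), \qquad \omega=(g^k_\xi(t))_{k\geq 1}.
\]
By the very definition of $\pi$, the right-hand side is precisely $\Phi_n\circ\pi(\xi,(t,x))$, where $\Phi_n:\Omega\times X\to\mathbb{R}\cup\{-\infty\}$ is the measurable function $\Phi_n(\omega,x)=\log Lf^n_\omega(x)$. Therefore, by the standard pushforward identity and the fact that $\pi(\nu\times\hat{\mu})=\bar{\mu}$,
\[
\int \log Lh^n_\xi(t,x)\, d(\nu\times\hat{\mu})
\;=\;\int \Phi_n\circ\pi \, d(\nu\times\hat{\mu})
\;=\;\int \Phi_n\, d\bar{\mu}
\;=\;\int \log Lf^n_\omega(x)\, d\bar{\mu}.
\]

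Dividing both sides by $n$ and letting $n\to\infty$, both sides converge in $[-\infty,\infty)$ by subadditivity (Fekete's lemma applied to the subadditive sequences $a_n=\int \log Lh^n\,d(\nu\times\hat{\mu})$ and $b_n=\int \log Lf^n\,d\bar{\mu}$, which are subadditive thanks to the chain-rule inequality $\log L_{n+m}\leq \log L_n\circ (\text{shift})^m+\log L_m$ together with $H$- and $F$-invariance of the respective measures). The limits are, by definition, $\lambda(\nu\times\hat{\mu})$ on the left and $\lambda(\bar{\mu})$ on the right, yielding the desired equality $\lambda(\hat{\mu})=\lambda(\bar{\mu})$.

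There is no serious obstacle here; the whole proof is essentially bookkeeping once one notices the pointwise identity $\log Lh^n = (\log Lf^n)\circ\pi$. The only mildly delicate point, which I would flag explicitly, is to justify the subadditivity and limit existence in the possibly non-integrable regime $\lambda=-\infty$, but the $\inf_n = \lim_n$ characterization of the Lyapunov exponent (used already in the definition in Section~\ref{sec:lyapunov-def}) handles this uniformly.
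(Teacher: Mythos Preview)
Your proposal is correct and follows essentially the same approach as the paper: both proofs rest on the pointwise identity $\log Lh^n_\xi(t,x) = (\log Lf^n_\omega)\circ\pi(\xi,(t,x))$ from~\eqref{eq:cont-lip}, followed by the change-of-variables $\pi(\nu\times\hat\mu)=\bar\mu$ in the integral and passage to the limit. The paper's version is more terse and omits your remarks on integrability and the $\inf_n=\lim_n$ issue, but the argument is identical.
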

\begin{proof} From~\eqref{eq:cont-lip} and the notation introduced in~\S\ref{sec:lyapunov-def}, we have that,
$$
Lh_\xi^n(t,x) = L_n(\pi(\xi,(t,x))) \quad \text{for all $n\geq 1$,}
$$
and thus,
\begin{align*}
    \lambda(\nu\times\hat{\mu}) &= \lim_{n\to\infty} \frac{1}{n} \int \log Lh_\xi^n(t,x)\, d(\nu\times\hat{\mu}) = \lim_{n\to\infty} \frac{1}{n} \int \log L_n(\pi(\xi,(t,x))) \, d(\nu\times\hat{\mu}) \\ 
    &=   \lim_{n\to\infty} \frac{1}{n} \int \log L_n(\omega, x) \, d\pi(\nu\times\hat{\mu}) = \lim_{n\to\infty} \frac{1}{n} \int \log L_n(\omega,x) \, d\bar{\mu} \\ 
    &=\lim_{n\to\infty} \frac{1}{n} \int \log Lf^n_\omega(x) \, d\bar{\mu}
    =\lambda(\bar{\mu}).  \qedhere
\end{align*}
\end{proof}

\subsection{Proof of Theorem~\ref{cor:Markovian-qc}}

We first note that the hypothesis in Theorem~\ref{cor:Markovian-qc} implies the standing assumption in Section~\ref{sec:iid-representation}.
If $\mathbb{P}$ is an ergodic Markov measure on $\Omega$, then we have a transition probability $Q(t,A)$ and an ergodic $Q$-stationary measure $p$ as the initial distribution of $\mathbb{P}$.
Since $T$ is finite, we can modify $Q(t,A)$ by $\tilde{Q}(t,A)$ which equals zero outside the support of $p$, and hence $p$ will be the unique $\tilde{Q}$-stationary measure.
In other words, under the assumption that $T$ is finite, any ergodic Markov measure $\mathbb{P}$ on $\Omega=T^\mathbb{N}$ is obtained from a uniquely ergodic transition probability on $T$, which is our standing assumption.

Since, by assumption of Theorem~\ref{cor:Markovian-qc},  $T$ is a finite set and $f$ is a Lipschitz random map, by Lemma~\ref{lem:Mark-lip-iid}, we have that the iid representation $h$ of the Markovian iteration of $f$ is a Lipschitz random map. As~\eqref{eq:lip} provides a uniform bound for the Lipschitz contact of $h_s$, the corresponding integrability condition~\eqref{eq:integrability} and the exponential moment condition
\begin{equation}\label{eq:integral_condition}
  \int \mathrm{Lip}(h_{\omega_0})^\beta \, d\mathbb{P}(\omega) < \infty \quad \text{for some $\beta > 0$}
\end{equation}
(indeed, for any $\beta>0$) are fulfilled. Moreover, given an $h$-stationary measure $\hat{\mu}$ on $\hat  X$, Proposition~\ref{prop:Lyapunov} implies that 
$\lambda(\hat{\mu})=\lambda(\bar{\mu})$, where  $\bar{\mu}=\pi(\nu\times \hat{\mu})$ is an $F$-invariant $\mathbb{P}$-Markovian invariant measure according to Theorem~\ref{thm:Matias} and Remark~\ref{eq:mu-hat-bar}.
Hence, since $f$ is by assumption $\mathbb{P}$-mostly contracting, it holds that $\lambda(\bar{\mu})<0$ and thus $\lambda(\hat{\mu})<0$. Therefore, $h$ is a mostly contracting Bernoulli random map.

Since $T$ is finite, we have that $(\hat X,d)$ is a compact metric space and then, since $h$ is a mostly contracting random map satisfying the exponential moment condition~\eqref{eq:integral_condition}, we can apply~\cite[Theorem~B]{BM24}.  We get that for any $\alpha>0$ small enough, the annealed Koopman operator  $P:C^\alpha(\hat X) \to C^\alpha(\hat X)$  defined by 
$$
    P\varphi (t,x) = \int \varphi \circ h_s (t,x)\, d\rho(\xi) \qquad \varphi \in C^\alpha (\hat X) 
$$
is quasi-compact. Since by~\eqref{eq:P-h} we have that $\hat P((t,x),d\hat u)=\rho(\{s\in S: h_s(t,x)\in B\})=h^{(t,x)}\rho$, where $h^{(t,x)}=h(\cdot,(t,x))$, then
$$
\int \varphi \circ h_s (t,x)\, d\rho(s)  = \int \varphi(\hat u) \, dh^{(t,x)}\rho(\hat u) \int \varphi(\hat u)  \, \hat P((t,x),d\hat u) = \hat P \varphi (t,x), 
$$
and thus $P$ coincides with the operator $\hat P$ 
associated with the transition probability $\hat{P}((t,x),B)$.  To get {\tt (fpm)} for the skew-product $F$ with respect to $\bar m= \mathbb{P}\times m$ we need some extra work.

First, by~\cite[Corollary~I(b)]{BM24}, given a probability measure $m$ on $X$, we get that finitely many ergodic $h$-stationary probability measures $\hat{\mu}_1,\dots,\hat{\mu}_r$ on $\hat X$ such that 
$$
 (\nu\times \hat m) \left( B(\nu\times \hat{\mu}_1) \cup \dots \cup B(\nu\times \hat{\mu}_r)\right)=1  \quad \text{and} \quad (\nu\times \hat m)(B(\nu\times \hat{\mu}_i))>0  \ \text{for $i=1,\dots,r$},
$$
where $\hat m =p\times m$. Again, by Theorem~\ref{thm:Matias} and Remark~\ref{eq:mu-hat-bar}, $\bar{\mu}_i=\pi(\nu\times \hat{\mu}_i)$ is an ergodic $F$-invariant $\mathbb{P}$-Markovian measure on $\Omega\times X$ for all $i=1,\dots,r$. Moreover, from Lemma~\ref{lem:matias}, we have that $\pi(\nu\times \hat m)=\mathbb{P}\times m =\bar{m}$. Now, by Corollary~\ref{cor:basin}, $\pi(B(\nu\times \nu_i)) \subset B(\bar\mu_i)$ for $i=1,\dots,r$. 
As a consequence,
$$
 \bar{m}(B(\bar{\mu}_i))=\pi(\nu\times \hat m)(B(\bar{\mu}_i))=(\nu\times \hat m)(\pi^{-1}(B(\bar{\mu}_i))\geq (\nu\times \hat m)(B(\nu\times \hat \mu_i))>0 \quad \text{for $i=1,\dots,r$}
$$
and
\begin{align*}
\bar{m}\left(B(\bar{\mu}_1)\cup\dots\cup B(\bar{\mu}_r)\right) &=
\pi(\nu\times \hat m)\left(B(\bar{\mu}_1)\cup\dots\cup B(\bar{\mu}_r)\right) \\ 
 &\geq (\nu\times \hat m) \left( B(\nu\times \hat{\mu}_1)\cup \dots \cup B(\nu\times \hat{\mu}_1)\right)=1.
\end{align*}
This concludes the proof of Theorem~\ref{cor:Markovian-qc}.

\section{Markovian invariant principle: Proof of Corollaries~\ref{cor:mostly-contraction-circle} and~\ref{cor:interval}} \label{sec:invariant}
An invariance principle is a statement on the rigidity of the invariant measure, given the condition that the extremal Lyapunov exponents coincide. In one dimension, this condition is equivalent to the maximal Lyapunov exponent vanishing. The pioneering establishment of this principle appears in the seminal work by Ledrapier~\cite{Le86}, in the context of linear cocycles (random sequences of matrices).  The nonlinear smooth version of this principle was first established by Crauel in~\cite{crauel:1990} (see also~\cite{AV10} and~\cite{Mal:17}). Here we get the following invariant principle for Markovian random maps: 

\begin{thm} \label{thm:Markovian-inv-principle}
Let $(T,\mathscr{A})$ be a standard Borel space and consider an ergodic shift-invariant Markov measure $\mathbb{P}$ on $(\Omega,\mathscr{F})=(T^\mathbb{N},\mathscr{A}^\mathbb{N})$. Let $F:\Omega \times \mathbb{S}^1 \to \Omega \times \mathbb{S}^1$ be the skew-product associated with  a random map $f:T\times  \mathbb{S}^1\to \mathbb{S}^1$, where $f_{\omega_0}$ is a  $C^1$ diffeomorphism  
for $\mathbb{P}$-a.e.~$\omega\in\Omega$ and~\eqref{eq:integrability} holds. Then, for every ergodic  $F$-stationary $\mathbb{P}$-Markovian measure $\bar{\mu}$ on $\Omega\times \mathbb{S}^1$, one of the following conditions holds:
    \begin{itemize}
        \item contraction: $\lambda(\bar{\mu})<0$, or
        \item invariance: $f_{\omega_0}\mu_{\omega_0}=\mu_{\omega_1}$  for $\mathbb{P}$-a.e.~$\omega=(\omega_i)_{i\geq 0}\in \Omega$, where $d\bar\mu=\mu_{\omega_0}\, d\mathbb{P}(\omega)$. 
    \end{itemize}
\end{thm}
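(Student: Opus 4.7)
I would adapt Baxendale's entropy/Jensen argument (which underlies the classical Bernoulli invariance principle on the circle, cf.~\cite{crauel:1990}) to the Markov setting. The key new ingredient is a fiber-level stationarity identity that replaces the plain Bernoulli stationarity $\mu=\int f_t\mu\,dp(t)$ by a Markov-kernel average.

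First I would observe that the measure $\tilde\mu$ on $\hat X=T\times\mathbb S^1$ defined by $d\tilde\mu=\mu_t\,dp(t)$ is $\tilde P$-stationary by Theorem~\ref{thm:Matias2}, whence
\[
\mu_{\omega_1} \;=\; \int f_t\mu_t\, q(\omega_1,dt) \qquad \text{for $p$-a.e.\ $\omega_1\in T$.}
\]
Equivalently, this follows from testing the $F$-invariance of $\bar\mu$ against functions $\psi(\sigma\omega)\phi(x)$ together with the Markov property of $\mathbb{P}$: conditionally on $\sigma\omega$, the law of $\omega_0$ is $q(\omega_1,\cdot)$.

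Assume for the moment that each $\mu_t$ is absolutely continuous with respect to Lebesgue measure $m$ on $\mathbb{S}^1$, with density $\phi_t$, and write $\phi^t$ for the density of $f_t\mu_t$. A standard $C^1$ change-of-variables gives
\[
\int \log|f'_t(x)|\, d\mu_t(x) \;=\; h(\phi_t) - h(\phi^t),\qquad h(\phi)\eqdef\int \phi\log\phi\,dm.
\]
Integrating in $t\sim p$ and using that $p$ is stationary also under the dual kernel $q$, the second term equals $\iint h(\phi^t)\,q(\omega_1,dt)\,dp(\omega_1)$, which by convexity of $x\mapsto x\log x$ applied to the identity $\phi_{\omega_1}=\int \phi^t\,q(\omega_1,dt)$ dominates $\int h(\phi_{\omega_1})\,dp(\omega_1)=\int h(\phi_t)\,dp(t)$. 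This yields $\lambda(\bar\mu)\le 0$, with equality precisely when Jensen is saturated, i.e.\ when $\phi^t=\phi_{\omega_1}$ for $q(\omega_1,dt)$-a.e.~$t$ and $p$-a.e.~$\omega_1$; equivalently, $f_{\omega_0}\mu_{\omega_0}=\mu_{\omega_1}$ for $\mathbb P$-a.e.~$\omega$. The dichotomy ``$\lambda(\bar\mu)<0$ or invariance'' follows.

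The main obstacle I foresee is the a priori absolute-continuity assumption used to write densities (fully singular $\mu_t$ are expected in the contracting regime). This can be removed by running Jensen directly on the Radon--Nikod\'ym density $\Phi^t=d(f_t\mu_t)/d\mu_{\omega_1}$, which exists for $q(\omega_1,dt)$-a.e.~$t$ by the Markov stationarity identity, and re-expressing $\log|f'_t|$ through the change of variables $f_t\colon\mathbb{S}^1\to\mathbb{S}^1$; strict convexity of $x\log x$ again characterizes the equality case. Alternatively, one may bypass the issue via the iid representation of Section~\ref{sec:iid-representation}: by Proposition~\ref{prop:Lyapunov}, $\lambda(\hat\mu)=\lambda(\bar\mu)$ for $\hat\mu=\Theta(\bar\mu)$, and a Crauel-type backward-martingale argument applied to the Bernoulli random circle-diffeomorphism system $h$ (after extending to two-sided time) produces an equivariant limit measure whose identification with $\hat\mu$ when $\lambda=0$, combined with the projection $\pi$ from Lemma~\ref{lem:matias}, delivers the Markovian invariance statement.
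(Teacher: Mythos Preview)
Your approach is genuinely different from the paper's, and the gap you identify (absolute continuity of the fiber measures) is real and is not closed by either of your proposed fixes.

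The paper's proof is essentially a two-line citation: Malicet's invariance principle \cite[Theorem~F]{Mal:17} (restated as \cite[Theorem~7.1]{BM24}) is already formulated for an arbitrary ergodic skew-product over a measure-preserving base, not just over a Bernoulli shift. Hence no Markov-specific adaptation is required. One assumes $\lambda(\bar\mu)\ge 0$, invokes \cite[Proposition~7.2]{BM24} to obtain $\lambda_{con}(\bar\mu)=0$, and then \cite[Theorem~F]{Mal:17} gives directly that the disintegration $\omega\mapsto\bar\mu_\omega$ is equivariant under $F$; since $\bar\mu$ is $\mathbb P$-Markovian this disintegration has the form $\bar\mu_\omega=\mu_{\omega_0}$, and the fiberwise identity $f_{\omega_0}\mu_{\omega_0}=\mu_{\omega_1}$ follows. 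The iid representation of Section~\ref{sec:iid-representation} is not used at all.

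Your entropy/Jensen computation in the absolutely continuous case is correct (modulo the tacit finiteness of $\int h(\phi_t)\,dp(t)$, which you do not verify), and your use of Theorem~\ref{thm:Matias2} to produce the kernel identity $\mu_{\omega_1}=\int f_t\mu_t\,q(\omega_1,dt)$ is the right Markov substitute for Bernoulli stationarity. However, both fixes for the singular case fail as written. In Fix~1, the Radon--Nikod\'ym density $d(f_t\mu_t)/d\mu_{\omega_1}$ need not exist for $q(\omega_1,\cdot)$-a.e.~$t$: the barycenter identity only says that each fixed $\mu_{\omega_1}$-null set is $f_t\mu_t$-null for $q$-a.e.~$t$, with the exceptional $t$-set depending on the null set (take $q(\omega_1,\cdot)=\mathrm{Leb}_{[0,1]}$ and $f_t\mu_t=\delta_t$ to see this fail). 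Even granting absolute continuity, there is no formula expressing $\int\log|f'_t|\,d\mu_t$ through $d(f_t\mu_t)/d\mu_{\omega_1}$ alone, since the latter carries no information about derivatives with respect to Lebesgue. In Fix~2, the iid representation $h_s(t,x)=(g_s(t),f_{g_s(t)}(x))$ acts on $\hat X=T\times\mathbb S^1$, not on $\mathbb S^1$; for a general standard Borel $T$ this is not a manifold and $h_s$ is not a diffeomorphism, so neither Crauel's nor Baxendale's circle-diffeomorphism argument applies to $h$. Moreover, the whole machinery of Section~\ref{sec:iid-representation} (including Proposition~\ref{prop:Lyapunov}) runs under the standing assumption that $Q$ is uniquely ergodic, which is not among the hypotheses of the present theorem.
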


\begin{proof}  Assume that $\lambda(\bar\mu)\geq 0$. We can apply~\cite[Proposition~7.2]{BM24} to get that $\lambda_{con}(\bar{\mu})=0$, where \begin{align*}
\lambda_{con}(\bar{\mu}) = \int \lambda_{con}(\omega,x)\, d\bar{\mu}, \quad \text{with}  \quad
\lambda_{con}(\omega,x) \eqdef \limsup_{y \to x} \limsup_{n \to \infty} \frac{\log(d(f_\omega^n(x), f_\omega^n(y)))}{n}.
\end{align*} 
Then, according to~\cite[Theorem~F]{Mal:17} (see also~\cite[Theorem~7.1]{BM24}), and using the fact that the probability $\bar\mu$ is a $\mathbb{P}$-Markovian measure, the disintegration $d\bar{\mu}=\mu_{\omega_0}\, d\mathbb{P}(\omega)$ satisfies that 
\begin{equation*} \label{eq:invariance2}
   \mu_{\omega_1}=f_{\omega_0}\mu_{\omega_0}  \quad \text{for $\mathbb{P}$-a.e.~$\omega=(\omega_i)_{i\geq 0}\in \Omega$.} 
\end{equation*}
This concludes the proof of the theorem.
\end{proof}

\begin{rem} In~\cite[proof of Theorem~5.1, after Equation~(5.2)]{matias:2022}, 
the author claims that if $f$ does not have an invariant measure in common, then the invariance condition in the sense of Theorem~\ref{thm:Markovian-inv-principle} does not hold.
This is not true in general (for instance, take $\mathbb{P}$ periodic).
This affects~\cite[Theorem~2.4]{matias:2022}, whose conclusion only holds under the additional assumption that the invariance condition in the sense of Theorem~\ref{thm:Markovian-inv-principle} does not hold.  
\end{rem}

\subsection{Proof of Corollary~\ref{cor:mostly-contraction-circle}} 
 Let $\bar{\mu}$ be a $f$-stationary probability measure on $\Omega\times X$.  It is well established that the set of $\mathbb{P}$-Markovian $F$-invariant  measures, when it is nonempty, is convex with a nonempty set of extreme points. By the ergodic decomposition theorem~\cite[Appendix A, Theorem~1.1]{kifer-ergodic}, we can write 
$\bar{\mu}(E)=\int \bar{\nu}(E) \, d\alpha(\bar{\nu})$, where 
$\alpha$ is a probability measure on the set of ergodic $F$-invariant $\mathbb{P}$-Markovian measures on $\Omega\times X$. Since there does not exist a mesurable family of probability measures $\{\mu_t\}_{t\in T}$ such that $f_{\omega_0}\mu_{\omega_0}=\mu_{\omega_1}$ for $\mathbb{P}$-a.e.~$\omega=(\omega_i)_{i\geq 0}\in \Omega$, Theorem~\ref{thm:Markovian-inv-principle} implies that $\lambda(\bar{\nu})<0$ for all $\bar{\nu}$ in the support of $\alpha$ and consequently, $\lambda(\bar\mu)<0$. This implies that $f$ is $\mathbb{P}$-mostly contracting, proving the former half of the statement.

Furthermore, if $T$ is a finite set, then the standing assumption in Section~\ref{sec:iid-representation} is satisfied, as indicated in the previous section.
This concludes the proof of the latter statement of the corollary by applying Theorem~\ref{cor:Markovian-qc}.

\subsection{Proof of Corollary~\ref{cor:interval}} 
Since an interval $C^1$ diffeomorphisms onto its image can be extended to $\mathbb{S}^1$, we can extend $f: T\times \mathbb{I} \to \mathbb{I}$ to a random map $\tilde{f}: T \times \mathbb{S}^1 \to \mathbb{S}^1$, such that for any point $x \in \mathbb{S}^1 \setminus \mathbb{I}$ and every $\omega \in \Omega$, there exists $n_0 \in \mathbb{N}$ such that $\tilde{f}^n_\omega(x) \in \mathbb{I}$ for all $n \geq n_0$.

Let \( F: \Omega \times \mathbb{I} \to \Omega \times \mathbb{I} \) and \( \tilde{F}: \Omega \times \mathbb{S}^1 \to \Omega \times \mathbb{S}^1 \) be the skew-product maps associated with \( f \) and \( \tilde{f} \), respectively. Since \( T \) is finite and each \( f_t \) (and thus \( \tilde{f}_t \)) is \( C^1 \) on a compact manifold, the condition that \( \log^+ \|\tilde{f}'_{\omega_0}\|_\infty \) is \( \mathbb{P} \)-integrable is automatically satisfied.

We proceed by contradiction. Assume that \( f \) is not \( \mathbb{P} \)-mostly contracting. By definition, this implies that there exists at least one ergodic, \( F \)-invariant, \( \mathbb{P} \)-Markovian measure \( \bar{\mu} \) on \( \Omega \times \mathbb{I} \) such that \( \lambda(\bar{\mu}) \ge 0 \).
Any such \( F \)-invariant measure \( \bar{\mu} \) on \( \Omega \times \mathbb{I} \) can be extended to an \( \tilde{F} \)-invariant measure on \( \Omega \times \mathbb{S}^1 \) by defining \( \bar{\mu}(\Omega \times (\mathbb{S}^1 \setminus \mathbb{I})) = 0 \).
Hence, since the Lyapunov exponent \( \lambda(\bar{\mu}) \) remains the same whether calculated with respect to \( F \) or \( \tilde{F} \), we get that there exists an ergodic, \( \tilde{F} \)-invariant, \( \mathbb{P} \)-Markovian measure \( \bar{\mu} \) on \( \Omega \times \mathbb{S}^1 \) such that \( \lambda(\bar{\mu}) \ge 0 \).
According to Theorem~\ref{thm:Markovian-inv-principle}, $\bar\mu$  must satisfy the invariance condition $f_{\omega_0}\mu_{\omega_0}=\mu_{\omega_1}$  for $\mathbb{P}$-a.e.~$\omega=(\omega_i)_{i\geq 0}\in \Omega$, where $d\bar\mu=\mu_{\omega_0}\, d\mathbb{P}(\omega)$. 
Since $\mathbb{P}$ is a Markov measure with the transition probability $Q(t,A)$ and the initial distribution $p$, this invariance condition is 
  equivalently to the following:   
  \begin{equation} \label{eq:52}
 f_{t}\mu_{t} = \mu_{a} \quad \text{for $p$-a.e.~$t\in T$ and $Q(t,\cdot)$-a.e.~$a\in T$.}
 \end{equation}
Since \( T \) is finite and \( Q(t,A) \) is irreducible, the unique stationary measure \( p \) satisfies \( p(\{t\}) > 0 \) for all \( t \in T \). Furthermore, ``\( Q(t, \cdot) \)-a.e.~\( a \in T\)'' means ``for all \( a \in T \) such that \( Q(t, \{a\}) > 0 \)''. Thus,~\eqref{eq:52} becomes 
\begin{equation} \label{eq:53}
 f_{t}\mu_{t} = \mu_{a} \quad \text{for all $t,a\in T$ such that  $Q(t,\{a\})>0$.}
 \end{equation}

Now we prove that there exists a finite invariant orbit under the Markovian random dynamics of $f$, which is impossible by assumption.  To see this, observe that~\eqref{eq:53} implies \( \supp {f}_t \mu_t = \supp\mu_a \) for all $t,a\in T$ such that $Q(t,\{a\})>0$. As \( {f}_t \) is continuous and \( S_t=\supp \mu_t \) is compact, we get that 
\begin{equation*} \label{eq:support-invariance-proof-latex}
 S_a=\supp \mu_a= \supp {f}_t \mu_t =  {f}_t(S_t) \quad \text{for all } t,a \in T  \text{ such that } Q(t, \{a\}) > 0.
\end{equation*}
Let \( x_t = \inf S_t \), \( y_t = \sup S_t \) and $A_t=\{x_t,y_t\}$. Since \( S_t \subset \mathbb{I} \) is compact, \( A_t \subset S_t \). As \( f_t \) is a diffeomorphism on \( \mathbb{I} \), it is strictly monotone on the interval \( [\inf S_t, \sup S_t] \). The relation \( {f}_t(S_t) = S_a \) implies that \( {f}_t(A_t) = A_a \). Then \( \{A_t\}_{t \in T} \) is a collection of two points in \( \mathbb{I} \)  such that
\[
f_t(A_t) = A_a \quad \text{for all } t, a \in T  \text{ such that } Q(t, \{a\}) > 0.
\]
The existence of such a collection contradicts the hypothesis of the corollary. Therefore,  \( f \) is  \( \mathbb{P} \)-mostly contracting. The final statement regarding the conclusions of Theorem~\ref{cor:Markovian-qc} follows directly from the fact that \( f \) is \( \mathbb{P} \)-mostly contracting together with the hypothesis that \( T \) is finite.

\section*{Acknowledgement}
P.~G.~Barrientos was supported by grant PID2020-113052GB-I00 funded by MCIN, PQ 305352/2020-2 (CNPq), and JCNE E-26/201.305/2022 (FAPERJ). 
F.~Nakamura was supported by JSPS KAKENHI Grant Numbers 23K25785 and 24K16942.
Y.~Nakano was supported by JSPS KAKENHI Grant  Numbers 21K03332, 22K03342 and 23K03188.
H.~Toyokawa was supported by JSPS KAKENHI Grant Number 24K16943.


\def\cprime{$'$}

\end{document}